\documentclass[12pt,reqno]{amsart}
\usepackage[a4paper,margin=2.4cm]{geometry}
\usepackage{tikz}
\usepackage{amssymb,amsmath,amsthm,enumerate}

\usepackage[colorlinks=true, linkcolor=blue, urlcolor=magenta, hypertexnames=false]{hyperref}

\newcommand{\N}{{\mathbb N}}
\newcommand{\Z}{{\mathbb Z}}
\newcommand{\R}{{\mathbb R}}
\newcommand{\C}{{\mathbb C}}

\renewcommand{\H}{\mathcal{H}}

\newcommand{\dd}{\mathrm d}
\newcommand{\ii}{\mathrm i}
\newcommand{\ee}{\mathrm e}

\newcommand{\g}{{\mathfrak{g}}}

\numberwithin{equation}{section}

\theoremstyle{plain}
\newtheorem{theorem}{\bf Theorem}[section]
\newtheorem*{theorem*}{Theorem}
\newtheorem{lemma}[theorem]{\bf Lemma}
\newtheorem{proposition}[theorem]{\bf Proposition}
\newtheorem*{proposition*}{\bf Proposition}

\newtheorem{conjecture}[theorem]{Conjecture}

\theoremstyle{definition}
\newtheorem{definition}[theorem]{\bf Definition}
\newtheorem*{definition*}{\bf Definition}

\theoremstyle{remark}
\newtheorem*{remark*}{\bf Remark}
\newtheorem{remark}[theorem]{\bf Remark}

\newcommand{\ddiv}{\mathrm{div}}
\newcommand{\shift}{\mathrm{S}}
\newcommand{\midd}{\mathrm{M}}
\newcommand{\id}{\mathrm{I}}
\newcommand{\grad}{{\nabla}}
\newcommand{\lap}{{\Delta}}

\newcommand{\brac}[1]{\langle #1 \rangle}

\newcommand{\bigO}[1]{\mathcal{O}\left(#1\right)}
  
\begin{document}

\title[Optimal discrete $p$-Hardy--Rellich--Birman inequalities]{Optimal discrete $p$-Hardy--Rellich--Birman inequalities}

\author{Franti\v{s}ek \v{S}tampach}
\address{Department of Mathematics, Faculty of Nuclear Sciences and Physical Engineering, Czech Technical University in Prague, Trojanova 13, 12000 Prague~2, Czech Republic.}
\email{stampfra@cvut.cz}

\author{Jakub Waclawek}
\address{Department of Mathematics, Faculty of Nuclear Sciences and Physical Engineering, Czech Technical University in Prague, Trojanova 13, 12000 Prague~2, Czech Republic.}
\email{waclajak@cvut.cz}

\subjclass[2020]{26D15, 39A12, 47J05}

\keywords{Hardy inequality, Rellich inequality, Birman inequality, optimality}

\date{\today}

\begin{abstract}
We present a theory for constructing optimal lower bounds for the discrete half-line $p$-Laplacian of higher order $\ell\in\N$ and general $p>1$. The abstract framework introduces higher-order monotonicity and asymptotic constraints on a parameter sequence that determines optimal weights. As a concrete application, we specialize the parameter sequence to deduce new optimal discrete $p$-Hardy ($\ell=1$), $p$-Rellich ($\ell=2$), and $p$-Birman ($\ell\geq 3$) inequalities.
\end{abstract}

\maketitle

\section{Introduction} 

\subsection{From Hilbert and Hardy to optimal weights}

Hardy's journey toward his famous inequality began as early as 1915. In its simplest ($p=2$) difference form, Hardy's inequality states that
\begin{equation}
  \sum_{n=1}^\infty |u_n-u_{n-1}|^2
    \geq \sum_{n=1}^{\infty}w_{2}^{\mathrm{H}}(n)|u_n|^2, \quad\text{with}\;\; w_{2}^{\mathrm{H}}(n)=\frac{1}{4n^{2}},
      \label{eq:2-Hardy_dis}
\end{equation}
for all $u\in\ell^{2}(\N_0)$ with $u_0=0$.
Hardy originally sought a more elementary or elegant proof for another prominent inequality due to Hilbert. Hilbert's double-sum inequality~\cite{hil_89}, with its optimal constant $\pi$ verified by Schur~\cite{sch_11} in 1911, reads
\begin{equation}
\sum_{m,n=0}^{\infty}\frac{u_{m}\overline{u_{n}}}{m+n+1}\leq\pi\sum_{n=0}^{\infty}|u_n|^{2}
\label{eq:Hilbert_ineq}
\end{equation}
for all $u\in\ell^{2}(\N_0)$, where the bar denotes complex conjugation. We refer the reader to \cite{kuf-mal-per_06, kuf-mal-per_07} for a thorough account of the pre-history of Hardy and related inequalities.

It was already known to Schur~\cite{sch_11} that, despite the optimality of the constant $\pi$, the weight in Hilbert's inequality admits an improvement. Specifically, the value $1$ in the denominator of~\eqref{eq:Hilbert_ineq} can be taken as small as $1/2$. In fact, Schur's method yields the generalized Hilbert inequality
\[
\sum_{m,n=0}^{\infty}\frac{u_{m}\overline{u_{n}}}{m+n+\lambda}\leq\pi\sum_{n=0}^{\infty}|u_n|^{2}
\]
for all $\lambda\geq1/2$. If $\lambda$ is even smaller than $1/2$, the constant $\pi$ must be replaced by the larger sharp constant $\pi/|\sin\pi\lambda|$. These optimal constants are precisely the operator norms of the generalized Hilbert matrix on $\ell^{2}(\N_0)$, whose complete spectral representation is well known today; see \cite{ros_58} or \cite[Thm.~8]{kal-sto_16}.

Surprisingly, no similar improvements of Hardy's inequality~\eqref{eq:2-Hardy_dis} appeared in the literature for decades, despite the great interest in the topic. Only very recently, Keller, Pinchover, and Pogorzelski showed in~\cite{kel-pin-pog_cmp18} (see also \cite{kel-pin-pog_amm18, kre-lap-sta_22, kre-sta_22}) that the Hardy weight $w_{2}^{\mathrm{H}}$ in~\eqref{eq:2-Hardy_dis} can be replaced by the strictly larger weight
\begin{equation}
 w_{2}^{\mathrm{KPP}}(n)=2-\sqrt{1-\frac{1}{n}}-\sqrt{1+\frac{1}{n}}.
\label{eq:kpp-weight}
\end{equation}
They also proved that $w_{2}^{\mathrm{KPP}}$ is an optimal Hardy weight (see Definition~\ref{def:opt} below), which means, in particular, that it does not allow for further pointwise improvement.

As a consequence of our main result, we discover another improvement of the classical Hardy weight $w_{2}^{\mathrm{H}}$ that is more reminiscent of the generalized Hilbert inequality introducing a parameter into the denominator. Namely, it follows from Theorem~\ref{thm:optimal_rho_LB_form} (with $p=2$ and $\ell=1$) that the inequality 
\begin{equation}
  \sum_{n=1}^\infty |u_n-u_{n-1}|^2
    \geq \frac{1}{4}\sum_{n=1}^{\infty}\frac{|u_n|^2}{n(n+\lambda)}
      \label{eq:2-Hardy_dis_lam}
\end{equation}
holds for all $\lambda\geq-1/2$ and $u\in\ell^{2}(\N_0)$ with $u_0=0$. Furthermore, we prove that this weight with $\lambda=-1/2$ is also optimal.

The $p$-generalization of~\eqref{eq:2-Hardy_dis} with a sharp constant, i.e., the $p$-Hardy inequality
\begin{equation}
  \sum_{n=1}^\infty |u_n-u_{n-1}|^p
    \geq \sum_{n=1}^{\infty}w_{p}^{\mathrm{H}}(n)|u_n|^p,
    \quad\text{with}\;\; w_{p}^{\mathrm{H}}(n)=\left(\frac{p-1}{p}\right)^{\!p}\frac{1}{n^{p}},
      \label{eq:p-Hardy_dis}
\end{equation}
which holds for all $p>1$ and $u\in\ell^{p}(\N_0)$ with $u_0=0$, was proved by Landau in a 1921 letter to Hardy. Recently, Fischer, Keller, and Pogorzelski~\cite{fis-kel-pog_23} established a $p$-extension of~\eqref{eq:kpp-weight} by proving that the sequence
\begin{equation}\label{eq:weight_fkp}
w_{p}^{\mathrm{FKP}}(n)=\left(1-\left(1-\frac{1}{n}\right)^{\!\frac{p-1}{p}}\right)^{\!p-1}-\left(\left(1+\frac{1}{n}\right)^{\!\frac{p-1}{p}}-1\right)^{\!p-1},
\end{equation}
serves as an optimal weight into the $p$-Hardy inequality~\eqref{eq:p-Hardy_dis} for real-valued sequences $u$, and improves upon $w_{p}^{\mathrm{H}}$, see also~\cite{sta-wac_26}. To our knowledge, this is the only optimal discrete $p$-Hardy weight known until now. In Theorem~\ref{thm:optimal_rho_LB_form} (with $\ell=1$), we discover a new optimal $p$-Hardy weight
\begin{equation}
  w_{p}^{\mathrm{SW}}(n)=\left(\frac{p-1}{p}\right)^{\!p-1}\left(\frac{1}{(n-1/p)^{p-1}}-\frac{1}{n^{p-1}}\right),
      \label{eq:p-Hardy_dis_opt}
\end{equation}
which also improves pointwise upon the classical weight $w_{p}^{\mathrm{H}}$ and, for $p=2$, simplifies to the weight in~\eqref{eq:2-Hardy_dis_lam} with $\lambda=-1/2$.

\subsection{Beyond first order: the Rellich--Birman legacy}

The main focus of this article is on optimal higher-order variants of discrete $p$-Hardy inequalities, namely the optimal discrete $p$-Rellich and $p$-Birman inequalities. These inequalities first appeared in their continuous form for the case $p=2$.

A more renowned continuous version of the $p$-Hardy inequality appeared shortly after its discrete counterpart~\eqref{eq:p-Hardy_dis}, see~\cite{har_25}. Its differential form states that
\[
  \int_0^\infty |u'(x)|^p\,\dd x
    \geq \left(\frac{p-1}{p}\right)^{\!p} \int_0^\infty \frac{|u(x)|^p}{x^p}\dd x
\]
for all $p>1$ and $u\in W^{1,p}(\R_+)$ with $u(0_{+})=0$; here and below $\R_{+}\equiv(0,\infty)$.
An analogous lower bound for the $p$-norm of the second derivative,
\[
  \int_0^\infty |u''(x)|^p\,\dd x
    \geq \frac{(p-1)^{p}(2p-1)^{p}}{p^{2p}} \int_0^\infty \frac{|u(x)|^p}{x^{2p}}\dd x,
\]
holds for all $p>1$ and $u\in W^{2,p}(\R_+)$ with $u(0_{+})=u'(0_{+})=0$. 
This inequality is referred to as the $p$-Rellich inequality on $\R_{+}$, see~\cite{dav-hin_98,owe_99}, as it was first obtained by Rellich in 1953 for $p=2$ (even in higher dimension) and published posthumously in~\cite{rel_56}. 

Generalizations to derivatives of arbitrary order for $p=2$ were subsequently found by Birman~\cite{bir_61}; see also \cite[pp. 83--84]{gla_66}. For $p>1$, the $p$-Birman inequality with sharp constant reads 
\begin{equation}\label{eq:p-birman_cont}
    \int_0^\infty |u^{(\ell)}(x)|^{p}\,\dd x
      \geq \int_0^\infty \rho_{p}^{\mathrm{B}}(x)|u(x)|^p\dd x,
      \quad\text{ with } \rho_{p}^{\mathrm{B}}(x)=\left(\frac{p-1}{p}\right)^{\!p}_{\!\ell}\,\frac{1}{x^{\ell p}},
\end{equation}
where $\ell\in\N$, $u\in W^{\ell,p}(\R_+)$ with $u(0_{+})=u'(0_{+})=\dots=u^{(\ell-1)}(0_{+})=0$, and 
$(a)_\ell :=a(a+1)\dots(a+\ell-1)$ denotes the Pochhammer symbol; see \cite[Thm.~7 and Rem.~10]{sta-wac_prep26} for proofs. Using the notation $\nabla u_{n}=u_{n}-u_{n-1}$ (see \eqref{eq:def_grad_div_Z}), the discrete counterpart of~\eqref{eq:p-birman_cont} takes the form
\begin{equation}
    \label{eq:p-birman_dis}
    \sum_{n=\ell}^{\infty} |\nabla^\ell u_n|^p \geq  \sum_{n=\ell}^{\infty}\rho_{p}^{\mathrm{B}}(n) |u_n|^p,
  \end{equation}
where $\ell\in\N$ and $u\in\ell^{p}(\N_0)$ with $u_0=u_1=\dots =u_{\ell-1}=0$. The discrete $p$-Birman inequality~\eqref{eq:p-birman_dis} was first deduced for $p=2$ by Huang and Ye in~\cite{hua-ye_24} and by the authors for general $p>1$ in~\cite{sta-wac_prep26}.

As with the discrete Hardy inequality, the discrete Rellich and Birman inequalities admit improvements.
To date, only the linear case $p=2$ has been addressed. Initial attempts to improve the discrete Rellich inequality were made in~\cite{ger-kre-sta_25}, followed by~\cite{hua-ye_24}. Subsequently, the authors developed a comprehensive $p=2$ theory in~\cite{sta-wac_26acc}, providing a framework in which the first optimal discrete Rellich and Birman inequalities were established. The objective of the present article is to extend this theory to the nonlinear case of general $p>1$ and to determine the first concrete instances of optimal discrete $p$-Hardy--Rellich--Birman inequalities.
Notably, our results produce new inequalities even for the case $p=2$. For instance, Theorem \ref{thm:optimal_rho_LB_form} with $p=2$ implies that the discrete Birman inequality~\eqref{eq:p-birman_dis} holds with $\rho_{2}^{\mathrm{B}}$ replaced by the strictly bigger weight
\begin{equation}
\rho_{2}^{\mathrm{SW}}(n)=
\left(\frac{1}{2}\right)_{\!\ell}^{\!2}\frac{1}{n(n-1/2)(n-1)\cdots(n-\ell+1/2)},
    \label{eq:2-Birman_dis_opt}
\end{equation}
which is optimal in the sense of Definition \ref{def:opt} below.

\subsection{Organization} The paper is organized as follows. Section~\ref{sec:main_res} introduces essential notation, definitions, and states the main results: abstract sufficient conditions for constructing optimal discrete $p$-Hardy--Rellich--Birman weights (Theorems~\ref{thm:p-Hardy_l}--\ref{thm:optimal_p-Hardy}) and a concrete optimal discrete $p$-Hardy--Rellich--Birman inequality (Theorem~\ref{thm:optimal_rho_LB_form}). The corresponding proofs are provided in Sections~\ref{sec:proofs_abstract} and~\ref{sec:proofs_concrete}. Lastly, Section~\ref{sec:add} extends known results to an alternative discrete $p$-Hardy--Rellich--Birman weight, proving its optimality in specific cases and conjecturing the extension to the full parameter range.

\section{Main results}\label{sec:main_res}

\subsection{Notation}
We use the notation $\Z$, $\N$, and $\N_0$ for the sets of integers, positive integers, and non-negative integers, respectively.
If not stated otherwise, \textbf{we assume throughout this article} that 
\begin{equation}
\boxed{\, \ell\in\N \quad\text{ and }\quad p>1. \,}
\label{eq:assum_p_and_ell}
\end{equation}
Furthermore, $q>1$ always denotes the H{\" o}lder conjugate to $p$, i.e.,
\[
\frac{1}{p}+\frac{1}{q}=1.
\]

We denote by $C(\Z)$ the space of complex sequences indexed by $\Z$, and by $C_{0}(\Z)$ its subspace of finitely supported sequences. We introduce the \textit{discrete gradient} and \textit{discrete divergence} operators acting on $C(\Z)$ as
\begin{equation} 
    \label{eq:def_grad_div_Z}
    (\grad u)_n := u_n - u_{n-1}
    \quad\text{ and }\quad
    (\ddiv u)_n := u_{n+1} - u_n
\end{equation}
for $n \in \Z$. The discrete derivatives $\grad$ and $\ddiv$ commute on $C(\Z)$ and their composition is the discrete Laplacian $\Delta$ on $C(\Z)$, 
\[
(\Delta u)_n:=u_{n-1}-2u_n+u_{n+1},
\]
for $n\in\Z$. For brevity, we omit the parentheses in expressions like $(\grad u)_n$, $(\ddiv u)_n$, $(\Delta u)_n$, etc., and write simply $\grad u_n$, $\ddiv u_n$, $\Delta u_n$, etc.

Although we do not work, in fact, with doubly-infinite sequences, it turns out to be notationally advantageous to formally extend semi-infinite sequences by zeros and therefore we define the space
\[
    H^{\ell}:= \{u \in C(\Z) \mid u_{n} = 0 \text{ for all } n < \ell\}
\]
and its subspaces
\begin{equation*}
    \H^{\ell,p} := H^{\ell} \cap \ell^p(\Z)
    \quad\text{ and }\quad
    \H_0^\ell  := H^\ell \cap C_0(\Z),
\end{equation*}
endowed with the $\ell^p$-norm. Clearly, $\H^{\ell,p}$ can be identified with the $\ell^{p}$-space of sequences indexed by $\ell,\ell+1,\ell+2,\dots$.

The classical Hardy inequality~\eqref{eq:2-Hardy_dis} can be rephrased as the inequality $-\Delta\geq w^{\mathrm{H}}$ on $\H^{1,2}\simeq\ell^{2}(\N)$ understood in the sense of quadratic forms. The $\ell$-th power of $-\Delta$ is the $\ell$-fold composition of $-\Delta$, and $(-\Delta)^{\ell}=(-1)^{\ell}\ddiv^{\ell}\grad^{\ell}$. Then the Birman inequality~\eqref{eq:p-birman_dis} with $p=2$ can be written as $(-\Delta)^{\ell}\geq\rho_{2}^{\mathrm{B}}$ on $\H^{\ell,2}$ in the sense of quadratic forms.

If $p\neq2$, the $p$-Birman inequality~\eqref{eq:p-birman_dis} cannot be interpreted in the sense of quadratic forms but we can express it similarly by polarizing the $\ell^{p}$-norm of $\grad^\ell u$, for $u \in \H^{\ell}_{0}$, as
\begin{equation*}
    \sum_{n=\ell}^{\infty} \bigl|\grad^\ell u_n\bigr|^p
        = \sum_{n=\ell}^{\infty} \overline{u_n}\,(-\lap_p^{(\ell)}u)_n,
\end{equation*}
from which we infer the definition of the \textbf{non-linear} discrete \emph{$p$-Laplacian of order $\ell$}, 
\begin{equation}
    \label{eq:def_p-lap}
    -\lap_p^{(\ell)}u
        := (-1)^\ell \ddiv^\ell\bigl(\grad^\ell u\bigr)^{\brac{p-1}},
\end{equation}
where $\nu^{\brac{a}}:=\nu|\nu|^{a-1}$ for any $a>0$ and $\nu\in\C$, with the convention $0^{\brac{a}}:=0$. Then~\eqref{eq:p-birman_dis} can be formulated as 
\[
\langle u,-\lap_p^{(\ell)}u\rangle\geq\langle u,\rho_{p}^{\mathrm{B}}u\rangle
\]
for $u\in\H_{0}^{\ell}$, where $\langle\,\cdot\, , \,\cdot\,\rangle$ denotes the Euclidean inner product anti-linear in the first argument and linear in the second one.

We also apply the general convention that the $0$-th power of a linear difference operator is the identity operator. In particular, if applied on the right-hand side of~\eqref{eq:def_p-lap}, we obtain
\[
-\lap_p^{(0)}u:=(u)^{\brac{p-1}}.
\]

\subsection{Abstract optimal discrete \texorpdfstring{$p$}{p}-Hardy--Rellich--Birman inequalities}

We survey our first abstract results which encapsulate sufficient conditions imposed on a parameter sequence to give rise to optimal discrete $p$-Hardy--Rellich--Birman inequality. We start with definitions of key notions: the discrete $p$-Hardy--Rellich--Birman weight and its optimality.

\begin{definition}
A non-negative sequence $\rho=\{\rho_n\}_{n=\ell}^\infty$ is called a \emph{discrete $p$-Hardy} ($\ell=1$), \emph{$p$-Rellich} ($\ell=2$), or \emph{$p$-Birman} ($\ell\geq3$) \emph{weight} if and only if the inequality
    \begin{equation} 
        \label{eq:p-hrb_ineq_gener}
        \sum_{n=\ell}^{\infty} \bigl|\grad^\ell u_n\bigr|^p \geq \sum_{n=\ell}^{\infty} \rho_n|u_n|^p
    \end{equation}
holds for all $u\in \H_0^\ell$. In cases when $\ell$ is a general, further non-specified, positive integer, we refer to $\rho$ as the \emph{discrete $p$-Hardy--Rellich--Birman weight}.
\end{definition}

\begin{definition}\label{def:opt}
The $p$-Hardy--Rellich--Birman weight $\rho$ is said to be \textit{optimal} if it exhibits the following three properties:
    \begin{enumerate}[(i)]
        \item 
            \textit{Criticality}: The weight $\rho$ is called \textit{critical} if for any discrete $p$-Hardy--Rellich--Birman weight $\tilde{\rho}$, such that $\tilde{\rho}_n\geq\rho_n$ for all $n\geq \ell$, it follows that $\tilde{\rho}=\rho$.
        \item
            \textit{Non-attainability}: The weight $\rho$ is called \textit{non-attainable} if, whenever the equality in \eqref{eq:p-hrb_ineq_gener} is attained for $u\in H^\ell$ such that the right-hand side of~\eqref{eq:p-hrb_ineq_gener} is finite, then $u\equiv0$.
        \item 
            \textit{Optimality near infinity}: The weight $\rho$ is called \textit{optimal near infinity} if for any $m\ge \ell$ and $\varepsilon>0$ there exists $u\in \H_0^m$ such that 
        \begin{equation*}
            \sum_{n=\ell}^{\infty}\bigl|\grad^\ell u_n\bigr|^p < (1+\varepsilon) \sum_{n=\ell}^{\infty} \rho_n|u_n|^p.
        \end{equation*}
    \end{enumerate}
Additionally, we define a weaker property than (ii): The weight $\rho$ is said to be \textit{non-attainable in} $\H_0^\ell$ if, whenever equality in \eqref{eq:p-hrb_ineq_gener} is attained for $u \in \H_0^\ell$, then $u \equiv 0$.
\end{definition}

\begin{remark}\label{rem:opt}
The definition of optimality is motivated by a definition used in~\cite{kel-pin-pog_cmp18}, but it is not identical. We complement the three optimality properties with explanatory remarks:
\begin{enumerate}[(i)]
        \item 
            The criticality of the weight $\rho$ means that the inequality~\eqref{eq:p-hrb_ineq_gener} does not hold (for all $u\in\H_{0}^{\ell}$) with $\rho$ replaced by a pointwise bigger weight, i.e. the inequality~\eqref{eq:p-hrb_ineq_gener} cannot be improved any further.
        \item 
            Instead of non-attainability, the authors of \cite{kel-pin-pog_cmp18, kel-pin-pog_20} use the notion \emph{null-criticality} which extends the criticality. Assuming $p=2$ and critical $\rho$, it is proved in \cite{kel-pin-pog_cmp18} the existence of a unique (up to a positive multiplicative constant) non-negative non-trivial solution $u$ of the equation $-\Delta u=\rho u$, called the \emph{Agmon ground state}. The critical weight $\rho$ is then called null-critical, if the Agmon ground state $u$ does not belong to the weighted $\ell^{2}$-space with the weight $\rho$, i.e. the right-hand side of~\eqref{eq:p-hrb_ineq_gener} is infinite. The existence of the Agmon ground state was later proven in \cite{fis_23} also in the setting of real $\ell^{p}$-spaces. We prefer the non-attainability as it refers directly to the inequality~\eqref{eq:p-hrb_ineq_gener} itself and avoids the necessity of proving existence of the Agmon ground state. Since the criticality together with the non-attainability imply null-criticality, they constitute a~stronger condition.
        \item 
            The optimality near infinity means that the constant $1$ appearing on the right-hand side of \eqref{eq:p-hrb_ineq_gener} cannot be further improved, even if we restrict ourselves only to the sequences from the space $\H_0^m\subset\H_0^\ell$ for arbitrarily large $m\ge\ell$. It is equivalent to the equality
            \begin{equation}
                \label{eq:opt_near_inf}
                \inf_{u\in\H_0^m\setminus\{0\}}\frac{\sum_{n=\ell}^{\infty}|\grad^\ell u_n|^p}{\sum_{n=\ell}^{\infty} \rho_n|u_n|^p}=1
                \quad\text{ for all }
                m \ge\ell.
            \end{equation}
            Finally, we remark that it is shown in \cite[Thm. 2.6]{fis_24} that for certain discrete $p$-Hardy weights, null-criticality implies optimality near infinity.
\end{enumerate}
\end{remark}

Our starting point is a weighted generalization of an abstract first-order $p$-Hardy inequality analogous to \cite[Thm. 3.1]{fis_23}, where it is formulated in an abstract graph setting for real-valued sequences. The proof of the weighted generalization is reminiscent of the one given in \cite{fis_23} or \cite[Prop.~4]{fis-kel-pog_23}, but for the reader's convenience and completeness, it will be revisited in Section \ref{sec:proofs_abstract}. The inequality is potentially of independent interest and was utilized, without the concrete form of the remainder terms, in~\cite{sta-wac_prep26}.

In the next theorem and below, we use the following notation: for two real numbers (or functions) $\alpha,\beta$, we write $\alpha \lesssim \beta$ if and only if there exists a positive constant $C>0$ such that $\alpha \le C \beta$, where the constant $C$ may depend only on $p$ and possibly also on $\ell$, the two parameters fixed by assumption~\eqref{eq:assum_p_and_ell}.

\begin{theorem} \label{thm:p-Hardy_1}
    Suppose $V_n\geq 0$ for all $n\geq 1$ and $\g\in H^1$ satisfying $\g_{n+1}\geq\g_n>0$ for all $n\geq 1$. Then for any $u\in\H_0^1$, we have the inequalities
    \begin{equation} 
        \label{eq:thm:p-Hardy_1}
        \sum_{n=1}^{\infty} V_{n+1} \mathcal{R}(q;\g,u)_n 
            \lesssim\sum_{n=1}^{\infty} V_n |\grad u_n|^p + \sum_{n=1}^{\infty} \frac{\ddiv(V(\grad\g)^{p-1})_n}{\g_n^{p-1}} |u_n|^p 
            \lesssim\sum_{n=1}^{\infty} V_{n+1} \mathcal{R}(p;\g,u)_n,
    \end{equation}
    where the remainder terms are given by the formulas
    \begin{equation}
        \label{eq:thm:p-Hardy_1_rem}
        \mathcal{R}(s;\g,u)_n :=
        \begin{cases}
            \left|\sqrt{\frac{\g_{n}}{\g_{n+1}}}u_{n+1}-\sqrt{\frac{\g_{n+1}}{\g_{n}}}u_{n}\right|^p 
              & \text{ if } s\in(1,2], \\
            \left|\sqrt{\frac{\g_{n}}{\g_{n+1}}}u_{n+1}-\sqrt{\frac{\g_{n+1}}{\g_{n}}}u_{n}\right|^2 \left(|\ddiv u_n| + \frac{|u_n|}{\g_n}\ddiv\g_n\right)^{p-2} 
              & \text{ if } s\in(2,\infty),
        \end{cases}
    \end{equation}
    for all $n\ge1$, with the convention $0^{p-2}\equiv0$ if $p\in(1,2]$.
\end{theorem}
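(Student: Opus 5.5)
Our approach is the ground-state substitution used in \cite{fis_23} and \cite[Prop.~4]{fis-kel-pog_23}, with a weight $V$ inserted. For $u\in\H_0^1$ and $n\ge1$ we put $u_n=\g_n v_n$. Since $u_0=0$ and $u$ is finitely supported, we sum by parts:
\[
\sum_{n\ge1}\frac{\ddiv(V(\grad\g)^{p-1})_n}{\g_n^{p-1}}|u_n|^p=-\sum_{n\ge1}V_{n+1}(\ddiv\g_n)^{p-1}\,\ddiv\bigl(\g^{-(p-1)}|u|^p\bigr)_n .
\]
Boundary terms at $n=1$ vanish or are absorbed because $\g_0=0$ and $u_0=0$. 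Indeed, the $n=1$ term $V_1|\grad u_1|^p=V_1|u_1|^p$ is exactly matched by the contribution of $V_1(\grad\g_1)^{p-1}|u_1|^p/\g_1^{p-1}=V_1|u_1|^p$. After reindexing, the middle quantity in \eqref{eq:thm:p-Hardy_1} becomes $\sum_{n\ge1}V_{n+1}F_n$, where
\[
F_n:=|u_{n+1}-u_n|^p-(\ddiv\g_n)^{p-1}\Bigl(\frac{|u_{n+1}|^p}{\g_{n+1}^{p-1}}-\frac{|u_n|^p}{\g_n^{p-1}}\Bigr).
\]
It therefore suffices to prove the pointwise two-sided bound $\mathcal R(q;\g,u)_n\lesssim F_n\lesssim \mathcal R(p;\g,u)_n$ for each $n$. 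This is where $V\ge0$ is used.

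The second step reduces this to a two-point inequality. Write $a=u_n$, $b=u_{n+1}$, $x=\g_n$, $y=\g_{n+1}$, so $0<x\le y$ and $d=y-x\ge0$. If $d=0$, then $F_n=|b-a|^p$ and $\mathcal R=|b-a|^p$ in the case $s\le2$, so the bound is immediate. In the case $s>2$ we get $\mathcal R=|b-a|^p$ as well, because the extra term vanishes. If $d>0$, homogeneity lets us normalize. We set $t=y/x\ge1$, $\theta=1-1/t$, and write the expression as $|b-a|^p-\theta^{p-1}\bigl(|b|^p-t^{p-1}|a|^p\bigr)$, up to a factor of $x^{0}$; this is consistent since every term is $0$-homogeneous in $\g$. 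This is exactly the quantity appearing in the real/complex ``$p$-convexity'' lemma behind Fischer's remainder estimates. The key identity is that $F_n=0$ precisely when $b/a=t$, i.e.\ when $\sqrt{x/y}\,b-\sqrt{y/x}\,a=0$, which is the vanishing locus of $\mathcal R$.

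To prove the comparison itself, we use the standard two-sided estimates for the function $\Phi(z)=|z|^p$ on $\C$:
\[
|z+w|^p-|z|^p-p\,\mathrm{Re}\bigl(z^{\brac{p-1}}\bar w\bigr)\asymp
\begin{cases}|w|^2(|z|+|w|)^{p-2},\\ \end{cases}
\]
which is valid for all $p>1$. For $p\le2$ this is comparable to $\min(|w|^p,|w|^2|z|^{p-2})$. We apply these to $|b-a|^p$ and $|b|^p$, expanding around the ``ground-state direction'' $b_0=ta$, i.e.\ $w=b-ta$. The first-order terms cancel, which is exactly the reason for the choice of weight $\ddiv(V(\grad\g)^{p-1})/\g^{p-1}$. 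The zeroth-order terms cancel by the identity $(t-1)^p-\theta^{p-1}(t^p-t^{p-1})=0$. What remains is a difference of two second-order remainders, with coefficients $1$ and $\theta^{p-1}$. From this we expect $F_n\asymp |w|^2\bigl(|b-a|+|w|\bigr)^{p-2}$ up to $p$-dependent constants, and a short computation converts $|w|$ into the symmetric form $|\sqrt{x/y}\,b-\sqrt{y/x}\,a|$.

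The main obstacle is the last step. We must show that the difference of the two second-order remainders stays comparable to a single one, uniformly in $t\ge1$, and then translate the scale $(|b-a|+|w|)^{p-2}$ into the stated scale $\bigl(|\ddiv u_n|+|u_n|\ddiv\g_n/\g_n\bigr)^{p-2}$. For $p>2$ this gives the upper bound with $\mathcal R(p)$. For $p<2$ the $\min$-form produces the pure $p$-th power form on the upper side, and on the lower side the roles of $p$ and $q$ swap, which is why $\mathcal R(q)$ appears on the left. We plan to handle the uniformity by a convexity argument along the segment from $ta$ to $b$, as in Fischer's proof.
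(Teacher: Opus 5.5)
Your reduction is correct and matches the paper's: summation by parts with the $n=1$ boundary terms cancelling, the pointwise two-sided bound for $F_n$ (where $V\ge0$ enters), and your normalization, which is the paper's substitution $z=h_n/h_{n-1}$, $t=\g_{n-1}/\g_n$ in other variables. Your expansion around $b_0=ta$ is also right. With $E(z,w):=|z+w|^p-|z|^p-p\,\mathrm{Re}\bigl(z^{\brac{p-1}}\bar w\bigr)$ and $w=b-ta$ you get exactly $F_n=E\bigl((t-1)a,w\bigr)-\theta^{p-1}E(ta,w)$.

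\textbf{The gap.} It lies in the step you call the main obstacle, which is the entire content of the theorem (in the paper, Lemma~\ref{lem:ineq}). The target you set is false. For $p=2$ both remainders equal $|w|^2$, so $F_n=(1-\theta)|w|^2=|w|^2/t$, which is not comparable to $|w|^2(|b-a|+|w|)^{p-2}=|w|^2$ uniformly in $t\ge1$. In general the two second-order remainders nearly cancel when $t=\g_{n+1}/\g_n$ is large, and the correct size is $F_n\asymp t^{-1}|w|^2(|b-a|+|w|)^{p-2}$. The factor $t^{-1}$ is exactly what turns $|w|^2$ into $\bigl|\sqrt{x/y}\,b-\sqrt{y/x}\,a\bigr|^2=|w|^2/t$, so it cannot come from a ``short computation'' at the end. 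Moreover, two-sided bounds $E_1\asymp\cdots$, $E_2\asymp\cdots$ with unspecified constants can never give a lower bound for the difference $E_1-\theta^{p-1}E_2$. Deferring to Fischer's convexity argument leaves this open, and it also leaves open the passage from real to complex $a,b$, which the paper handles separately in Lemma~\ref{lem:ineq}.

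\textbf{What is missing.} You need a way to evaluate the cancellation exactly rather than by size. One way:
\begin{itemize}
\item By $p$-homogeneity of $E$, $\theta^{p-1}E(ta,w)=\theta^{-1}E(z,\theta w)$ with $z:=(t-1)a$. Hence $F_n=\phi(1)-\phi(\theta)$ for $\phi(\lambda):=E(z,\lambda w)/\lambda$.
\item Taylor's formula gives $\phi'(\lambda)=\lambda^{-2}\int_0^\lambda s\,\partial_s^2|z+sw|^p\,\dd s$.
\item Since $\partial_s^2|z+sw|^p\asymp|w|^2|z+sw|^{p-2}$ for complex $z,w$ and all $p>1$, this yields $\phi'(\lambda)\asymp|w|^2(|z|+\lambda|w|)^{p-2}$.
\item Integrating over $[\theta,1]$ (treating $\theta\ge1/2$ and $\theta<1/2$ separately) gives $F_n\asymp(1-\theta)|w|^2(|z|+|w|)^{p-2}$. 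This is comparable to the second expression in \eqref{eq:thm:p-Hardy_1_rem}, because $|z|+|w|\asymp|\ddiv u_n|+|u_n|\ddiv\g_n/\g_n$.
\item Both bounds of the theorem then follow from $|w|\le|z|+|w|$, together with $t^{-1}\le t^{-p/2}$ for $p\le2$ and $t^{-1}\ge t^{-p/2}$ for $p\ge2$.
\end{itemize}
Carried out this way, your route would treat complex sequences directly, without the paper's reduction to Fischer's real inequalities. As it stands, however, the decisive two-point estimate is neither proved nor correctly formulated.
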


\begin{remark} $\,$
    \begin{enumerate}[(i)]
        \item The parameter $s$ in \eqref{eq:thm:p-Hardy_1_rem} serves only to select the correct remainder expression. Since the Hölder conjugates $p,q>1$ satisfy $p>2 \Leftrightarrow q<2$, the remainder terms for the lower and upper bounds in \eqref{eq:thm:p-Hardy_1} simply swap their roles when $p$ transitions from $(1,2)$ to $(2,\infty)$.
        \item In the particular case $p=2$, both implicit constants in \eqref{eq:thm:p-Hardy_1} are equal to $1$, i.e., the double-sided inequality \eqref{eq:thm:p-Hardy_1} is actually an identity. This corresponds to \cite[Thm.~0]{sta-wac_26acc}, which asserts, for all $u\in\H_0^1$, the identity
    \[
        \hskip36pt \sum_{n=1}^{\infty} V_{n} |\grad u_{n}|^{2} + \sum_{n=1}^{\infty}\frac{\ddiv(V\grad \g)_{n}}{\g_n}|u_{n}|^{2} = \sum_{n=1}^{\infty} V_{n+1} \left|\sqrt{\frac{\g_{n}}{\g_{n+1}}}u_{n+1}-\sqrt{\frac{\g_{n+1}}{\g_{n}}}u_{n}\right|^2,
    \]
    provided that $\g_0=0$ and $\g_n>0$ for all $n\ge 1$. In this particular case, neither $V$ is required to be non-negative nor $\g$ to be non-decreasing.
    \end{enumerate}    
\end{remark}

The use of the distinguished font for the sequence $\g$ from Theorem~\ref{thm:p-Hardy_1} highlights its central role in our abstract construction. The sequence $\g$, which we refer to as the \emph{parameter sequence}, enters the sufficient conditions of the next theorems, Theorems~\ref{thm:p-Hardy_l}--\ref{thm:optimal_p-Hardy}, as the sole input.

The presence of weight $V$ in \eqref{eq:thm:p-Hardy_1} enables an iterative procedure that yields analogous inequalities for higher-order difference operators. The key idea behind the derivation of such inequalities is a suitable selection of the parameter sequence $\g$ in each step of the iteration. In this way, we obtain convenient representations of the weights $V$ in terms of $\g$. The necessary non-negativity of $V$ induces additional higher-order monotonicity assumptions imposed on $\g$.

\begin{theorem} \label{thm:p-Hardy_l}
    Suppose that $\g\in H^\ell$ satisfies the assumptions
    \begin{align}
        \grad^k\g_n & >0 \quad\text{ for all } n\ge\ell \text{ and } k\in\{0,\dots,\ell\},  \label{assump:1} \tag{A1} \\
        -\lap_p^{(\ell-k)}\grad^{k}\g_{n} &\ge 0 \quad\text{ for all } n \ge \ell \text{ and } k\in\{1,\dots,\ell-1\}. \label{assump:2} \tag{A2}
    \end{align}
    Then for all $u\in\H_0^\ell$, we have the inequalities
    \begin{equation} 
        \label{eq:thm:p-Hardy_l}
        \sum_{k=1}^{\ell}R_{k}^{(\ell)}(q;\g,u)
          \lesssim \sum_{n=\ell}^{\infty} \bigl|\grad^\ell u_n\bigr|^p - \sum_{n=\ell}^{\infty} \frac{-\lap_p^{(\ell)}\g_n}{\g_n^{p-1}}|u_n|^p 
          \lesssim \sum_{k=1}^{\ell}R_{k}^{(\ell)}(p;\g,u),
    \end{equation}
    where the remainders are given by the formula
    \begin{equation}
        \label{eq:thm:p-Hardy_l_rem}
        R_k^{(\ell)}(s;\g,u) := \sum_{n=\ell}^{\infty} \frac{-\lap_p^{(\ell-k)}\grad^{k}\g_{n+1}}{(\grad^{k}\g_{n+1})^{p-1}}\, \mathcal{R}(s;\grad^{k-1}\g,\grad^{k-1} u)_n
    \end{equation}
    with $\mathcal{R}(s;\g,u)$ defined in \eqref{eq:thm:p-Hardy_1_rem}.
\end{theorem}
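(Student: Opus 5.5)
Our plan is to iterate Theorem~\ref{thm:p-Hardy_1} $\ell$ times, peeling off one gradient at a time. For $k=1,\dots,\ell$ we apply Theorem~\ref{thm:p-Hardy_1} (shifted so that the index starts at $k$ rather than $1$) to the sequence $v=\grad^{k-1}u$, with parameter sequence $\grad^{k-1}\g$ and weight
\[
V^{(k)}_n:=\frac{-\lap_p^{(\ell-k)}\grad^{k}\g_n}{(\grad^{k}\g_n)^{p-1}}.
\]
This choice is dictated by the requirement that the ``potential'' term produced at step $k$ reproduce the gradient term consumed at step $k-1$. For $k=\ell$ we have $V^{(\ell)}\equiv1$, since $-\lap_p^{(0)}w=w^{\brac{p-1}}$ and $\grad^\ell\g>0$. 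The hypotheses of Theorem~\ref{thm:p-Hardy_1} are verified from the assumptions. Non-negativity $V^{(k)}\ge0$ is exactly \eqref{assump:2} combined with \eqref{assump:1}. Positivity and monotonicity of $\grad^{k-1}\g$, i.e.\ $\grad^{k-1}\g>0$ and $\grad^{k}\g>0$, is \eqref{assump:1}. Finally, $\grad^{k-1}\g\in H^{\ell-k+1}$ vanishes for indices below its support, which plays the role of $\g_0=0$. Since $u\in\H_0^\ell$, each $\grad^{k-1}u$ is finitely supported and vanishes below index $\ell$, so all sums are finite.

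The key algebraic identity we must check is
\[
\ddiv\bigl(V^{(k)}(\grad^{k}\g)^{p-1}\bigr)=\ddiv\bigl(-\lap_p^{(\ell-k)}\grad^k\g\bigr)=-\bigl(-\lap_p^{(\ell-k+1)}\grad^{k-1}\g\bigr).
\]
It follows from \eqref{eq:def_p-lap}: $-\lap_p^{(m)}w=(-1)^m\ddiv^m(\grad^m w)^{\brac{p-1}}$, hence $\ddiv(-\lap_p^{(m)}\grad w)=-(-\lap_p^{(m+1)}w)$. Consequently the potential term at step $k$ equals
\[
-\sum_n V^{(k-1)}_n\,|\grad^{k-1}u_n|^p .
\]
The case $k=1$ is special: there the potential term becomes $-\sum_n\frac{-\lap_p^{(\ell)}\g_n}{\g_n^{p-1}}|u_n|^p$, the Hardy term appearing in \eqref{eq:thm:p-Hardy_l}. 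Thus the middle expression of \eqref{eq:thm:p-Hardy_1} at step $k$ reads
\[
M_k:=\sum_n V^{(k)}_n|\grad^k u_n|^p-\sum_n V^{(k-1)}_n|\grad^{k-1}u_n|^p,
\]
with the convention $V^{(0)}=-\lap_p^{(\ell)}\g/\g^{p-1}$. Summing $M_1+\dots+M_\ell$ telescopes to
\[
\sum_n|\grad^\ell u_n|^p-\sum_n\frac{-\lap_p^{(\ell)}\g_n}{\g_n^{p-1}}|u_n|^p .
\]
For each $k$, Theorem~\ref{thm:p-Hardy_1} gives the two-sided bounds
\[
R_k^{(\ell)}(q;\g,u)\lesssim M_k\lesssim R_k^{(\ell)}(p;\g,u),
\]
because the remainders there are weighted by $V^{(k)}_{n+1}$, exactly matching \eqref{eq:thm:p-Hardy_l_rem}. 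Summing over $k$ with the maximal (respectively minimal) implicit constant, which depends only on $p$ and $\ell$, gives \eqref{eq:thm:p-Hardy_l}.

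The main obstacle is the bookkeeping of index ranges and boundary terms. Theorem~\ref{thm:p-Hardy_1} is stated for sequences in $H^1$ with sums starting at $n=1$, whereas at step $k$ the sums should run over $n\ge\ell$ for the $u$-terms. We must also check that terms at indices between $k$ and $\ell-1$ vanish or are harmless, since $\grad^{k-1}u$ is zero there but the weights may not be. In particular, we have to verify that the summation by parts inside Theorem~\ref{thm:p-Hardy_1} produces no extra boundary contributions when the parameter sequence is $\grad^{k-1}\g\in H^{\ell-k+1}$ rather than one in $H^1$. We also need to confirm that $V^{(k)}$ is only required to be non-negative on the range $n\ge\ell$ where \eqref{assump:2} is assumed. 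We would handle this by translating indices by $\ell-1$ and applying Theorem~\ref{thm:p-Hardy_1} to the shifted sequences, checking that the vanishing of $u$ below $\ell$ kills every term outside the range covered by the assumptions.
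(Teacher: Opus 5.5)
Your proposal is correct and is essentially the paper's proof with the induction on $\ell$ unrolled: the paper handles the step $k=1$ by applying Theorem~\ref{thm:p-Hardy_1} to $\shift^{\ell-1}u$ and $\shift^{\ell-1}\g$ with weight $\shift^{\ell-1}V^{(1)}$, and covers the steps $k=2,\dots,\ell$ by the induction hypothesis applied to $\ddiv u,\ddiv\g$; this is the same as your $\ell$ telescoping applications, linked by the same identity $\ddiv(-\lap_p^{(m)}\grad w)=-(-\lap_p^{(m+1)}w)$. The bookkeeping worry you raise disappears because $\grad$, unlike $\ddiv$, preserves $H^\ell$, so $\grad^{k-1}\g,\grad^{k-1}u\in H^\ell$ (not merely $H^{\ell-k+1}$), and the shift by $\ell-1$ places every step directly in the setting of Theorem~\ref{thm:p-Hardy_1} with no extra boundary terms.
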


If we further assume that assumption~\eqref{assump:2} holds also for $k=0$, i.e., $-\lap_p^{(\ell)}\g_n \ge 0$ for all $n \ge \ell$, then $\rho(\g):=-\lap_p^{(\ell)}\g/\g^{p-1}$ defines a non-negative sequence in $H^\ell$. Due to the non-negativity of the left-hand side of~\eqref{eq:thm:p-Hardy_l}, $\rho(\g)$ constitutes a $p$-Hardy--Rellich--Birman weight. 
Moreover, under these conditions, we prove that the weight $\rho(\g)$ is non-attainable in $\H_0^\ell$; see Definition~\ref{def:opt}. 

\begin{theorem}
    \label{thm:p-Hardy_weight}
    Suppose that $\g\in H^\ell$ satisfies assumptions \eqref{assump:1}, \eqref{assump:2}, and, in addition,
    \begin{equation} \label{assumpp:3} \tag{A3}
        -\lap_p^{(\ell)}\g_n\ge0 \quad\text{ for all } n\ge\ell.
    \end{equation}
    Then $\rho(\g)=-\lap_p^{(\ell)}\g/\g^{p-1}$ is a discrete $p$-Hardy--Rellich--Birman weight, which is non-attainable in $\H_0^\ell$.
\end{theorem}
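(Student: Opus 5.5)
The plan is to derive both claims directly from Theorem~\ref{thm:p-Hardy_l}, using its lower bound in \eqref{eq:thm:p-Hardy_l}. First, the weight property. By \eqref{assumpp:3} and \eqref{assump:1}, $\rho(\g)_n=-\lap_p^{(\ell)}\g_n/\g_n^{p-1}\ge0$ for all $n\ge\ell$. Next, every coefficient $-\lap_p^{(\ell-k)}\grad^k\g_{n+1}/(\grad^k\g_{n+1})^{p-1}$ in \eqref{eq:thm:p-Hardy_l_rem}, for $k\in\{1,\dots,\ell-1\}$, is non-negative by \eqref{assump:2} together with \eqref{assump:1}. For $k=\ell$ the coefficient is $(\grad^\ell\g_{n+1})^{p-1}/(\grad^\ell\g_{n+1})^{p-1}=1>0$, using the convention $-\lap_p^{(0)}v=v^{\brac{p-1}}$ and positivity of $\grad^\ell\g$. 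Each $\mathcal{R}(q;\cdot,\cdot)_n$ is non-negative by its definition \eqref{eq:thm:p-Hardy_1_rem}. So the left-hand side of \eqref{eq:thm:p-Hardy_l} is non-negative, and hence \eqref{eq:p-hrb_ineq_gener} holds with $\rho=\rho(\g)$ for all $u\in\H_0^\ell$.

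Second, non-attainability in $\H_0^\ell$. Suppose $u\in\H_0^\ell$ gives equality in \eqref{eq:p-hrb_ineq_gener}. Then the middle expression of \eqref{eq:thm:p-Hardy_l} vanishes, so $\sum_k R_k^{(\ell)}(q;\g,u)=0$. Since all summands are non-negative, the $k=\ell$ term gives $\mathcal{R}(q;\grad^{\ell-1}\g,\grad^{\ell-1}u)_n=0$ for every $n\ge\ell$. Write $h=\grad^{\ell-1}\g$ and $v=\grad^{\ell-1}u$. Then $h_n>0$ and $h$ is non-decreasing, because $\grad h=\grad^\ell\g>0$. Vanishing of $\mathcal{R}$ then forces $\frac{v_{n+1}}{h_{n+1}}=\frac{v_n}{h_n}$ for all $n\ge\ell$.

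This needs a case check. When $q\le2$ the conclusion is immediate from the first branch of \eqref{eq:thm:p-Hardy_1_rem}. When $q>2$ the term is a product, and the second factor $\bigl(|\ddiv v_n|+|v_n|\ddiv h_n/h_n\bigr)^{p-2}$ is zero only if $v_n=v_{n+1}$ and $|v_n|\ddiv h_n=0$; since $\ddiv h_n>0$, this gives $v_n=v_{n+1}=0$. So in either case $v_{n+1}/h_{n+1}=v_n/h_n$ holds. Hence $v_n=c\,h_n$ for $n\ge\ell$.

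Now $v$ is finitely supported, while $h_n\ge h_\ell>0$ for all large $n$. Therefore $c=0$, so $\grad^{\ell-1}u_n=0$ for all $n\ge\ell$. For $n<\ell$ we also have $\grad^{\ell-1}u_n=0$, since $u\in H^\ell$ and $\grad$ only involves $u_n$ and $u_{n-1}$. Thus $\grad^{\ell-1}u\equiv0$ on $\Z$. Because $u$ vanishes for $n<\ell$, we can invert $\grad$ iteratively, using $u_n=\sum_{j\le n}\grad u_j$, which is a finite sum. This gives $\grad^{\ell-2}u\equiv0,\dots,u\equiv0$.

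The main point needing care is this last step. One has to track the index ranges in Theorem~\ref{thm:p-Hardy_l}, where the sums start at $n=\ell$, to be sure that vanishing of the top remainder really determines $\grad^{\ell-1}u$ on the whole relevant range. One also has to handle the degenerate case $q>2$, where the remainder is a product, as done above. If the index bookkeeping at $n=\ell-1$ is an issue, the fallback is to use $v_n=0$ for $n<\ell$ together with the ratio constancy from $n=\ell$ onward, which is all the argument needs.
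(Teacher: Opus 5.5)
Your proof is correct and follows essentially the same route as the paper. Non-negativity of all remainders $R_k^{(\ell)}(q;\g,u)$, which follows from \eqref{assump:1}--\eqref{assump:2}, gives the weight property; for non-attainability, vanishing of the top remainder $R_\ell^{(\ell)}$ (with the same case check when $q>2$) forces $\grad^{\ell-1}u=c\,\grad^{\ell-1}\g$ for $n\ge\ell$, and the only cosmetic difference is that the paper reads this directly as $u=c\,\g$ before invoking compact support, whereas you first obtain $c=0$ from compact support of $\grad^{\ell-1}u$ and then invert $\grad^{\ell-1}$ on $H^\ell$, which is equivalent.
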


By imposing additional requirements on the asymptotic behavior of $\g_n$, for $n\to\infty$, and strict positivity in assumption \eqref{assumpp:3}, we obtain sufficient conditions for the optimality of the weight $\rho(\g)$. We regard the next theorem as our first main result.

\begin{theorem} \label{thm:optimal_p-Hardy}
    Suppose that $\g\in H^\ell$ satisfies assumptions \eqref{assump:1}, \eqref{assump:2}, and \eqref{assumpp:3} with strict positivity, i.e., $-\lap_p^{(\ell)}\g_n > 0$ for all $n \geq \ell$. In addition, suppose that $\g$ admits the asymptotic expansion
    \begin{equation} \label{assump:4}
        \g_{n} = \sum_{j=0}^{2\ell}\alpha_{j}n^{\ell-j-1/p} + \mathcal{O}(n^{-\ell-1-1/p}),
        \quad\text{ as }n\to\infty,
        \text{ for some } \alpha_{j}\in\R  \text{ with } \alpha_{0}\neq0.
    \tag{A4}
    \end{equation}
    Then $\rho(\g)=-\lap_p^{(\ell)}\g/\g^{p-1}$ is an optimal discrete $p$-Hardy--Rellich--Birman weight.
\end{theorem}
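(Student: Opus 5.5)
Theorem~\ref{thm:p-Hardy_weight} already shows that $\rho(\g)$ is a weight. I plan to prove the three properties of Definition~\ref{def:opt}, and all of them rest on one fact: a suitable truncation of $\g$ itself is an approximate minimizer. The key input is the two-sided estimate \eqref{eq:thm:p-Hardy_l}. For $u=\g\varphi$, with $\varphi$ a cut-off sequence, I will show that the upper remainder $\sum_k R_k^{(\ell)}(p;\g,u)$ can be made small compared with $\sum_n\rho_n|u_n|^p$.

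The first step is to fix the asymptotics. By \eqref{assump:4}, $\g_n\sim\alpha_0 n^{\ell-1/p}$, so $\rho_n\g_n^p=-\lap_p^{(\ell)}\g_n\,\g_n\asymp n^{-1}$. Indeed, $(\grad^\ell\g)^{p-1}\asymp n^{-(p-1)/p}=n^{-1/q}$, and $\ell$ further divergences give order $n^{-1/q-\ell}$, whose leading coefficient is nonzero. Strict positivity in \eqref{assumpp:3} guarantees that the sign is consistent. This is where the $2\ell$ terms of the expansion are used: one has to differentiate $2\ell$ times without losing the leading order of the error. Consequently $\sum\rho|\g|^p=\infty$ diverges logarithmically.

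The second step is to choose the test sequences. Take logarithmic cut-offs $\varphi^{(N)}$ that equal $1$ on $[m',N]$, vanish for $n<m'$, and decay on $[N,N^2]$ like $\varphi_n=2-\log n/\log N$. On the left side $\varphi$ is smooth, of order $\ell$ if needed, with $\varphi=0$ below $m$. With $u=\g\varphi$ we get $\sum\rho|u|^p\gtrsim\log N$. The remainders $\mathcal{R}(p;\grad^{k-1}\g,\grad^{k-1}u)$ measure how far $\grad^{k-1}u$ is from being proportional to $\grad^{k-1}\g$. On $[m',N]$ they vanish, since $u=\g$ there and the remainder expression is zero for $u=\g$. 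On the tail, the discrete Leibniz rule gives terms that involve $\grad^j\varphi\lesssim n^{-j}/\log N$. The weights $-\lap_p^{(\ell-k)}\grad^k\g_{n+1}/(\grad^k\g_{n+1})^{p-1}$ are $\asymp n^{-(\ell-k)-1/q}\cdot n^{(p-1)(\ell-k+1/q)}$ by the same asymptotics. Powers then balance so that the tail contributes $\lesssim(\log N)^{-\min(2,p)}\sum_{N}^{N^2}n^{-1}\lesssim(\log N)^{1-\min(p,2)}$. For $p>2$ I will use the mixed form of $\mathcal{R}$ with exponent $2$ and $p-2$. The left cut-off region contributes $O(1)$. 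Dividing by $\log N$ gives a quotient of $1+o(1)$, and I will verify this book-keeping carefully. This gives optimality near infinity directly, because $m$ is arbitrary.

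The third step handles criticality and non-attainability. For criticality, suppose $\tilde\rho\ge\rho$ is a weight with $\tilde\rho_{n_0}>\rho_{n_0}$. Apply the inequality to $u^{(N)}=\g\varphi^{(N)}$, now with the left cut-off trivial, so $\varphi=1$ from $\ell$ to $N$. Then $\sum|\grad^\ell u|^p-\sum\rho|u|^p\lesssim\sum_k R_k\to0$ by the estimate above, since without the left cut-off only the tail survives. However, $(\tilde\rho_{n_0}-\rho_{n_0})\g_{n_0}^p>0$ is a fixed positive amount, which gives a contradiction. For non-attainability, suppose equality holds for some $u\in H^\ell$ with $\sum\rho|u|^p<\infty$. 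I will pass to the limit in the lower bound of \eqref{eq:thm:p-Hardy_l}, using approximation by truncations and Fatou's lemma. This forces $R_1^{(\ell)}(q;\g,u)=0$, and the weights there are strictly positive by \eqref{assump:2} and \eqref{assumpp:3}. Hence $u_{n+1}/\g_{n+1}=u_n/\g_n$, so $u=c\g$. Since $\sum\rho\g^p=\infty$, we get $c=0$. The delicate point is justifying the limit for nonfinitely supported $u$. I expect this approximation step, together with the exact power counting of the remainders on the tail (especially for $p>2$), to be the main obstacle.
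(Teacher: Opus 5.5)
\textbf{Main gap: the approximation step in non-attainability.} The step you flag as delicate is a genuine gap, not a technicality. Take $u\in H^\ell$ with $\sum_n|\grad^\ell u_n|^p=\sum_n\rho_n(\g)|u_n|^p<\infty$. You need $u^N\in\H_0^\ell$ with $u^N\to u$ pointwise and \emph{both} $\sum_n|\grad^\ell u^N_n|^p\to\sum_n|\grad^\ell u_n|^p$ and $\sum_n\rho_n(\g)|u^N_n|^p\to\sum_n\rho_n(\g)|u_n|^p$. Only then do the lower bound in \eqref{eq:thm:p-Hardy_l} and Fatou's lemma force a remainder to vanish.

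Truncation does not give this. A sequence with $u_n\sim n^{\ell-1/p}/\log n$ has both sums finite, yet $\grad^\ell(u\mathbf{1}_{[\ell,N]})$ contains boundary terms of size $|u_N|\to\infty$. Smooth logarithmic cut-offs $\xi^N u$ can be made to work, but only if the Leibniz cross terms $\grad^j\xi^N\cdot\grad^{\ell-j}u$, $j\geq1$, tend to $0$ in $\ell^p$. That requires weighted Hardy bounds $\sum_n n^{-jp}|\grad^{\ell-j}u_n|^p\lesssim\sum_n|\grad^\ell u_n|^p$ for non-finitely supported $u\in H^\ell$, and nothing in your plan supplies them.

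The paper's missing idea is Lemma~\ref{lem:density_of_grad}: $\grad^\ell(\H_0^\ell)$ is dense in $\H^{\ell,p}$ by Hahn--Banach. An annihilating $v\in\H^{\ell,q}$ satisfies $\ddiv^\ell v_n=0$ for $n\geq\ell$, so it is a polynomial of degree $<\ell$ there, hence zero. Take $u^N$ with $\grad^\ell u^N\to\grad^\ell u$ in $\ell^p$. Applying the already established inequality to $u^N-u^M$ shows that $\rho(\g)^{1/p}u^N$ is Cauchy in $\ell^p$. Its limit is identified with $\rho(\g)^{1/p}u$ through injectivity of $\grad^\ell$ on $H^\ell$.

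From there your ending works. Your claim that the weights of $R_1^{(\ell)}$ are strictly positive is true, but you should give the reason. Since $-\lap_p^{(\ell)}\g=-\ddiv\bigl(-\lap_p^{(\ell-1)}\grad\g\bigr)$, strict \eqref{assumpp:3} makes the non-negative sequence $-\lap_p^{(\ell-1)}\grad\g$ strictly decreasing, hence positive. Alternatively, use $R_\ell^{(\ell)}$, whose weights are identically $1$, as the paper does.

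\textbf{Second, fixable point: smoothness of the cut-off.} The cut-off on $[N,N^2]$ must be smooth in $\log n$ at both junctions, as with the paper's $\eta\bigl((2\log N-\log x)/\log N\bigr)$ in \eqref{eq:criticality:reg}. The literal profile $2-\log n/\log N$ has a slope jump at $n=N$, so $|\grad^2\varphi_{N+1}|\approx 1/(N\log N)$ rather than $N^{-2}/\log N$. For $\ell\geq2$ this puts a term $\g_N\grad^2\varphi_{N+1}\approx N^{\ell-1-1/p}/\log N$ into the bracket of $\mathcal{R}(\cdot\,;\grad\g,\grad u)_N$. The single summand of $R_2^{(\ell)}$ at $n=N$ is then of order $N^{p-1}/\log^p N$, for the lower remainder as well as the upper one. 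That breaks both criticality and optimality near infinity. The bound $|\grad^j\varphi_n|\lesssim n^{-j}/\log N$ that you invoke is exactly what smoothness buys.

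\textbf{Exponent slip.} The weight in $R_k^{(\ell)}$ is $\asymp n^{-(\ell-k)-1/q}\,n^{-(p-1)(\ell-k-1/p)}=n^{-p(\ell-k)}$; the second factor comes from the denominator $(\grad^k\g)^{p-1}$. The exponent you wrote is off, although the balance you claim does hold with the correct one.

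\textbf{Comparison with the paper.} With these repairs, your criticality argument is the paper's. Your optimality near infinity is a valid variant. The paper uses a detached bump on $[N,2N^3]$, whose remainders tend to $0$ while $\sum\rho|u|^p\geq\log 2$. You instead keep a fixed left transition near $m$, whose remainder is $N$-independent (so it need not even be smooth), and let the plateau make $\sum\rho|u|^p\gtrsim\log N$. This reuses the criticality sequence and is slightly simpler.
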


The three Theorems \ref{thm:p-Hardy_l}--\ref{thm:optimal_p-Hardy} are subsequently proven in Section~\ref{sec:proofs_abstract}.

\begin{remark}\label{rem:(A1-3)_impl_weak-opt}
    The strict positivity in assumption \eqref{assumpp:3} is required solely for the non-attainability of the weight $\rho(\g)$, whereas the properties of criticality, optimality near infinity, as well as the weaker non-attainability in $\H_{0}^{\ell}$ are already ensured by assumptions \eqref{assump:1}--\eqref{assump:4}.
\end{remark}

\begin{remark}
    Without going into details, we note that the asymptotic expansion in \eqref{assump:4} can be replaced by a more general one depending on a parameter $s\in(0,1)$. The admissible range of $s$ then determines which of the optimality properties are satisfied by the corresponding weight $\rho(\g)$. Specifically, for $\g\in H^\ell$ subject to assumptions \eqref{assump:1}, \eqref{assump:2}, \eqref{assumpp:3} with strict positivity, and, in addition,
    \[
        \g_{n} = \sum_{j=0}^{2\ell}\alpha_{j}n^{\ell-j-s} + \mathcal{O}(n^{-\ell-1-s})
        \quad\text{ as }n\to\infty,
        \text{ for some } \alpha_{j}\in\R  \text{ with } \alpha_{0}\neq0,
    \]
    the discrete $p$-Hardy--Rellich--Birman weight $\rho(\g)$ is critical if $s\in[1/p,1)$, non-attainable if $s\in(0,1/p]$, and optimal near infinity if $s=1/p$. Since the proof follows almost verbatim the arguments in Section \ref{sec:proofs_abstract}, it is omitted; see also \cite[Thm.~3]{sta-wac_26acc} for an analogous result in the linear case $p=2$.
\end{remark}

\subsection{Concrete optimal discrete \texorpdfstring{$p$}{p}-Hardy--Rellich--Birman inequalities}

By applying Theorem \ref{thm:optimal_p-Hardy}, we can construct concrete discrete $p$-Hardy--Rellich--Birman inequalities with optimal weights, if we find a parameter sequence $\g$ that satisfies all the assumptions. We set
\begin{equation}\label{eq:def:g}
\g_n^{(\ell,p)}:=\frac{\Gamma(n+1/q)}{\Gamma(n-\ell+1)}
            \quad\text{ for all }
            n\in\Z,
\end{equation}
where $\Gamma$ is the Euler Gamma function. Since the reciprocal Gamma function $1/\Gamma$ is entire and vanishes at non-positive integers, it follows that $\g^{(\ell,p)}\in H^{\ell}$. We prove in Section \ref{sec:proofs_concrete} that this sequence yields an optimal weight, which improves upon the classical $p$-Birman weight $\rho^{\mathrm{B}}_p$, see~\eqref{eq:p-birman_cont}--\eqref{eq:p-birman_dis}.

\begin{theorem}\label{thm:optimal_rho_LB_form}
    The sequence
    \begin{equation}
    \label{eq:optimal_rho_LB_form}
    \rho_n^{(\ell,p)}
          :=\frac{-\lap_p^{(\ell)}\g_n^{(\ell,p)}}{\bigl(\g_n^{(\ell,p)}\bigr)^{p-1}}
          = \left(\frac{1}{q}\right)_{\!\ell}^{\!p-1} \left[\frac{\Gamma(n-\ell+1)}{\Gamma(n+1/q)}\right]^{p-1}\sum_{j=0}^{\ell}(-1)^{j}\binom{\ell}{j}\left[\frac{\Gamma(n-\ell+j+1/q)}{\Gamma(n-\ell+j+1)}\right]^{p-1},
\end{equation}
is an optimal discrete $p$-Hardy--Rellich--Birman weight satisfying
\begin{equation}\label{eq:ineq_improve_birman}
    \rho_{n}^{(\ell,p)}>\left(\frac{1}{q}\right)_{\!\ell}^{\!p}\frac{1}{n^{\ell p}}
\end{equation}
for all $n\geq\ell$.
\end{theorem}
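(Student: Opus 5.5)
The plan is to apply Theorem~\ref{thm:optimal_p-Hardy} to $\g=\g^{(\ell,p)}$. Everything rests on one closed-form identity for discrete derivatives of Gamma ratios. Writing $a=1/q$, the relation $\Gamma(x+1)=x\Gamma(x)$ gives
\[
\grad\frac{\Gamma(n+a)}{\Gamma(n-m+1)}=(a+m-1)\frac{\Gamma(n-1+a)}{\Gamma(n-m+1)}.
\]
Iterating, for $0\le k\le\ell$,
\[
\grad^k\g_n=\frac{(a+\ell-k)_k\,\Gamma(n-k+a)}{\Gamma(n-\ell+1)}.
\]
This is positive for $n\ge\ell$, which gives (A1).

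Next I compute $(\grad^\ell\g_n)^{p-1}$. The $k=\ell$ case gives $\grad^\ell\g_n=(a)_\ell\,\Gamma(n-\ell+a)/\Gamma(n-\ell+1)$. For $n<\ell$ this vanishes because of $1/\Gamma$, so $(\grad^\ell\g)^{p-1}$ is an explicit sequence supported on $n\ge\ell$. Applying $(-1)^\ell\ddiv^\ell$ through the binomial expansion $(-1)^\ell\ddiv^\ell v_n=\sum_j(-1)^j\binom{\ell}{j}v_{n+j}$ yields exactly the formula \eqref{eq:optimal_rho_LB_form}.

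For (A2), I need $-\lap_p^{(\ell-k)}\grad^k\g_n\ge0$. Since $\grad^{\ell-k}\grad^k\g=\grad^\ell\g$, this quantity is $(-1)^{\ell-k}\ddiv^{\ell-k}f_n$ with $f_n=(\grad^\ell\g_n)^{p-1}$. So for (A2) and (A3) it suffices that $f_n=c\,h(n-\ell)$ with $h(x)=[\Gamma(x+a)/\Gamma(x+1)]^{p-1}$, and that $h$ is completely monotone on $(0,\infty)$ in the discrete sense: $(-1)^m\ddiv^m h\ge0$, strictly for (A3). I would show that $x\mapsto\Gamma(x+a)/\Gamma(x+1)$ is logarithmically completely monotone for $a\in(0,1)$. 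This is a known result via the integral representation of $\psi(x+1)-\psi(x+a)$ as a Laplace transform of a positive function. A positive power of a log-completely monotone function is again log-completely monotone, hence completely monotone. Strictness comes from nonconstancy, and the $m$-th forward difference of a completely monotone function is $(-1)^m$ times an integral of $h^{(m)}$, so it has the correct sign. The edge $n$ near $\ell$ causes no trouble, since all terms use arguments $x\ge0$. For $x=0$ the expression $\Gamma(a)/\Gamma(1)$ is finite, and complete monotonicity on $[0,\infty)$ holds because the representation extends there.

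For (A4), standard Gamma-ratio asymptotics give $\g_n=\Gamma(n+a)/\Gamma(n-\ell+1)\sim n^{\ell-1+a}\bigl(1+\sum c_jn^{-j}\bigr)$ with a full expansion. Since $\ell-1+a=\ell-1/p$, this is exactly \eqref{assump:4} with $\alpha_0=1$. Theorem~\ref{thm:optimal_p-Hardy} then gives optimality.

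Finally, for \eqref{eq:ineq_improve_birman}, I write
\[
\rho_n=(a)_\ell^{p-1}\,\frac{(-1)^\ell\ddiv^\ell h(n-\ell)}{h(n-\ell)\cdot\bigl[\Gamma(n+a)/\Gamma(n-\ell+a)\bigr]^{p-1}}.
\]
Using the integral representation $(-1)^\ell\ddiv^\ell h(y)=\int_{[0,1]^\ell}(-h)^{(\ell)}(y+t_1+\dots+t_\ell)\,dt$, I aim to compare with the continuous Birman weight, where $x^{\ell-1/p}$ gives exactly $(1/q)_\ell^p x^{-\ell p}$. This is the main obstacle: it needs a sharp two-sided comparison of $h$ with the power $y^{-(p-1)/p}$ and of $\Gamma(n+a)/\Gamma(n-\ell+a)$ with $n^\ell$. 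I would first try convexity arguments (Jensen on the simplex average together with log-convexity of $\Gamma$), checking that the inequalities $\Gamma(n+a)/\Gamma(n-\ell+a)<n^\ell$ and $(-h)^{(\ell)}$-type bounds all point in the favorable direction. If that fails, I would fall back on an induction on $\ell$ in the spirit of the $p=2$ paper.
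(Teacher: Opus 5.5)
Your verification of (A1)--(A4) is correct and is essentially the paper's argument. Your telescoping formula $\grad^k\g_n=(\ell-k+a)_k\,\Gamma(n-k+a)/\Gamma(n-\ell+1)$ is the paper's Lemma~\ref{lem:div_g}, which the paper obtains for $\ddiv^k$ via Chu--Vandermonde. (A2)--(A3) then follow, as in the paper, from logarithmic complete monotonicity of $\Gamma(x+a)/\Gamma(x+1)$.

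The genuine gap is the second assertion of the theorem: you never prove \eqref{eq:ineq_improve_birman}. You only describe what you would try, and the factor-by-factor plan does not work as stated. In your splitting, $[\Gamma(n+a)/\Gamma(n-\ell+a)]^{p-1}=[(n-\ell+a)_\ell]^{p-1}<n^{\ell(p-1)}$ is indeed favorable. That means you would need $(-1)^\ell\ddiv^\ell h(n-\ell)/h(n-\ell)\geq (a)_\ell n^{-\ell}$. Already for $\ell=1$ this ratio equals $1-(1-\frac{1}{pn})^{p-1}$. By Bernoulli's inequality this is strictly smaller than $\frac{p-1}{pn}=a/n$ whenever $p>2$. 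So the individual bounds do not all point the right way, and $\ell=1$ needs its own argument. The paper uses the closed form \eqref{eq:rho_1,p}; the mean value theorem for $x^{1-p}$ on $(n-1/p,n)$ also gives it at once.

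For $\ell\geq 2$, the missing idea is this: a pointwise comparison of $h(x)=[\Gamma(x+a)/\Gamma(x+1)]^{p-1}$ with a power says nothing about its $\ell$-th differences. You need the \emph{difference} to be completely monotone. The paper proceeds in four steps.
\begin{enumerate}[(1)]
\item It writes $\rho_n$ as $(a)_\ell^{p-1}[\Gamma(n-\ell+1)/\Gamma(n+a)]^{p-1}$ times $(-1)^\ell\ddiv^\ell h(n-\ell)$.
\item It bounds the prefactor below by $n^{-(p-1)(\ell-1/p)}$, using $n(n-1)\cdots(n-\ell+1)<n^\ell$ and Gautschi's inequality.
\item It shows that $h(x)-(x+1)^{-1/q}$ is completely monotone. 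Set $G(x):=(x+1)^{1/p}\Gamma(x+a)/\Gamma(x+1)$. Then $G$ is logarithmically completely monotone, since $\psi(x+1)-\psi(x+a)-\frac{1}{p(x+1)}$ is the Laplace transform of a positive density. Also $G>1$ by Gautschi. Hence $h-(x+1)^{-1/q}=(x+1)^{-1/q}\bigl(G^{p-1}-1\bigr)$ is a product of completely monotone functions.
\item This reduces the claim to $(-1)^\ell\ddiv^\ell(x+1)^{-1/q}\big|_{x=n-\ell}\geq (a)_\ell n^{-\ell-1/q}$. The paper proves this from a Laplace representation and $(1-\ee^{-t})^\ell\ee^{(\ell-1)t}\geq t^\ell$, valid for $\ell\geq2$.
\end{enumerate}

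Your Jensen idea would actually handle step (4). Convexity of $(a)_\ell(x+1)^{-\ell-1/q}$, averaged over the cube $[0,1]^\ell$, gives the lower bound $(a)_\ell(n-\ell/2+1)^{-\ell-1/q}\geq (a)_\ell n^{-\ell-1/q}$ for $\ell\geq 2$. But this only works once the completely monotone comparison of step (3) is in place.
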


\begin{remark}
    The proof of Theorem~\ref{thm:optimal_rho_LB_form} actually provides a tighter inequality than \eqref{eq:ineq_improve_birman}, namely
    \[
    \rho_{n}^{(\ell,p)}>\left(\frac{1}{q}\right)_{\!\ell}^{\!p}\frac{1}{n^{\ell}\left[n(n-1)\dots(n-\ell+1)\right]^{p-1}} \quad\text{ for all } n\geq\ell.
    \]
\end{remark}

\begin{remark}
    If $\ell=1$ or $p=2$, the expression in~\eqref{eq:optimal_rho_LB_form} simplifies. Specifically, one easily finds that 
    \begin{equation}\label{eq:rho_1,p}
    \rho_{n}^{(1,p)}=\left(\frac{1}{q}\right)^{\!p-1}\left(\frac{1}{(n-1/p)^{p-1}}-\frac{1}{n^{p-1}}\right), \quad n\geq 1.\end{equation}
    and, with the aid of the Chu--Vandermonde identity, see~\eqref{eq:chu-van_id} below, also that
    \begin{equation}\label{eq:rho_l,2}
    \rho_{n}^{(\ell,2)}=\left(\frac{1}{2}\right)_{\!\ell}^{\!2}\frac{1}{(n-\ell-1/2)_{\ell}(n-\ell+1)_{\ell}}, \quad n\geq\ell.
    \end{equation}
    These are different forms of the weights \eqref{eq:p-Hardy_dis_opt} and \eqref{eq:2-Birman_dis_opt} from the introduction. We stress that these optimal weights yield new inequalities complementing those discovered in earlier works \cite{fis-kel-pog_23, sta-wac_26acc} for $\ell=1$ or $p=2$.
\end{remark}

\begin{remark}\label{rem:conj_posit_coeff}
    With the aid of the Stirling formula, one may expand~\eqref{eq:optimal_rho_LB_form} for $n$ large. The leading term reveals the classical $p$-Birman weight,
    \[
    \rho_{n}^{(\ell,p)}=\left(\frac{1}{q}\right)_{\!\ell}^{\!p}\frac{1}{n^{\ell p}}\left[1+\mathcal{O}\left(\frac{1}{n}\right)\right], \quad n\to\infty.
    \]
    Beyond this asymptotic behavior, we conjecture that $\rho_{n}^{(\ell,p)}$ admits a convergent series expansion
    \begin{equation}
        \rho^{(\ell,p)}_n = \left(\frac{1}{q}\right)_{\!\ell}^{\!p}\frac{1}{n^{\ell p}}\left(1+\sum_{k=1}^\infty \frac{A_k^{(\ell,p)}}{n^{k}}\right) \quad\text{ for all }n\ge\ell,
    \label{eq:rho_ser_exp_pos}
    \end{equation}
    with the coefficients $A_k^{(\ell,p)}$ \textbf{positive} for all $k\in\N$. This positivity represents a stronger property than \eqref{eq:ineq_improve_birman} and can be readily established for the cases $\ell=1$ and $p=2$ using the simplified expressions in \eqref{eq:rho_1,p} and \eqref{eq:rho_l,2}. Specifically, one computes
    \[
     A_k^{(1,p)}=\frac{(p)_{k}}{p^k}\frac{1}{(k+1)!}
     \quad\text{ and }\quad
     A_k^{(\ell,2)}=\frac{S(2\ell-1+k,2\ell-1)}{2^k},
    \]
    where $S(n,k)$ are the (positive) Stirling numbers of the second kind, see \cite[Eqs. (26.8.5) and (26.8.11)]{dlmf}. Such positivity is a useful property as, by Theorem~\ref{thm:optimal_rho_LB_form}, truncations of the series for $\rho^{(\ell,p)}_n$ would yield explicit $p$-Hardy--Rellich--Birman weights gradually improving upon the classical $p$-Birman weight. A similar positivity question for the $p$-Hardy weight~\eqref{eq:weight_fkp} was raised in~\cite{fis-kel-pog_23}, and resolved recently in~\cite{sta-wac_26}.
\end{remark}

\begin{remark}
The second term of the asymptotic expansion for optimal Hardy weights is sometimes a point of comparison, as the leading term matches the classical weight. We remark that the weight
    \[
     \rho^{(1,p)}_{n}=\left(\frac{1}{q}\right)^{\!p}\frac{1}{n^{p}}\left[1+\frac{1}{2n}+\mathcal{O}\left(\frac{1}{n^{2}}\right)\right], \quad n\to\infty,
    \]
    has asymptotically heavier tail than~\eqref{eq:weight_fkp} and, for $p=2$, than the optimal Hardy weights from~\cite[Eq.~(5.2)]{das-fue_26} and \cite[Rem.~12]{sta-wac_26acc}. A similar observation holds for the discrete Rellich weight
    \[
     \rho^{(2,2)}_{n}=\frac{9}{16n^{4}}\left[1+\frac{3}{n}+\mathcal{O}\left(\frac{1}{n^{2}}\right)\right], \quad n\to\infty,
    \]
    compared to the Rellich weights from~\cite[Rem.~6e)]{sta-wac_26acc} and \cite[Eq.~(7.5)]{hua-ye_24}. 
\end{remark}

As an alternative to~\eqref{eq:def:g}, one may consider a different parameter sequence that aligns more closely with the continuous setting and with previous research \cite{fis-kel-pog_23, sta-wac_26acc} focused on the cases $\ell=1$ or $p=2$. This approach suggests replacing \eqref{eq:def:g} with the definition
\begin{equation}\label{eq:def:g_tilde}
            \tilde{\g}_n^{(\ell,p)} := n^{1-1/p} \prod_{j=1}^{\ell-1} (n-j)
            \quad\text{ for }n\in\N_{0},
        \end{equation}
setting $\tilde{\g}_n:=0$ for $n<0$. Then the corresponding weight 
\begin{equation}\label{eq:def:rho_tilde}
\tilde{\rho}_{n}^{(\ell,p)}:=\frac{-\lap_p^{(\ell)}\tilde{\g}_n^{(\ell,p)}}{\bigl(\tilde{\g}_n^{(\ell,p)}\bigr)^{p-1}}
\end{equation}
coincides with the Fischer--Keller--Pogorzelski optimal $p$-Hardy weight~\eqref{eq:weight_fkp} for $\ell=1$, and with the optimal discrete Birman weight from \cite[Thm.~5]{sta-wac_26acc} for $p=2$.

However, for general $p>1$ and $\ell\in\N$, we were unable to verify that $\tilde{\g}^{(\ell,p)}$ meets all the assumptions of Theorem~\ref{thm:optimal_p-Hardy}. The primary advantage of our original parameter sequence \eqref{eq:def:g} lies in its more favorable behavior under discrete differentiation compared to \eqref{eq:def:g_tilde}, see Lemma~\ref{lem:div_g}. We conclude this article with partial results extending the known cases, along with related conjectures, in Section \ref{sec:add}.

\subsection{Related literature}

The literature on Hardy inequalities is extraordinarily extensive, making it impossible to cover all related results. Therefore, below, we explicitly mention only closely related results and contributions specifically addressing discrete inequalities of Hardy type.

\vskip8pt
\noindent\textbf{Work of F.~Fischer:}
In his doctoral dissertation~\cite{fis_diss24} (subsequently published in \cite{fis_23,fis_24, fis_25}), Fischer developed a comprehensive framework for a general theory of optimal discrete $p$-Hardy inequalities on graphs, extending previous works \cite{kel-pin-pog_cmp18, kel-pin-pog_20} to the non-linear case of general $p>1$. Although our scope differs---as we restrict our attention to the simple case of a discrete half-line graph but investigate \textbf{higher-order Hardy-type inequalities} of Rellich and Birman type with an emphasis on concrete weights---our works intersect in several aspects.

First, our definition of optimality is inspired by~\cite{fis_diss24} and related previous works~\cite{dev-fra-pin_14, kel-pin-pog_cmp18}, although we adopt a stronger notion by replacing null-criticality with non-attainability (as detailed in Remark~\ref{rem:opt}).
Second, we build upon the non-trivial fundamental inequalities from~\cite[Lemma 3.8]{fis_23}, extending them to the complex domain in Lemma~\ref{lem:ineq}. As our motivation is disconnected from probability theory, complex sequence spaces are assumed in accordance with standard functional analysis conventions. In this respect, the Hardy case ($\ell=1$ and $p>1$) of Theorem~\ref{thm:tilde_g_opt} extends the main result of~\cite{fis-kel-pog_23} to \textbf{complex sequences and stricter optimality}. Finally, a core idea of the supersolution construction technique applied in~\cite{fis_diss24} (and previously used also in~\cite{dev-pin_16, kel-pin-pog_cmp18, ver_23}) is implicitly present in our proof of Theorem~\ref{thm:optimal_p-Hardy}.

\vskip8pt
\noindent\textbf{Work of the authors for $p=2$:} This paper continues the investigation initiated in \cite{sta-wac_26acc} for the case $p=2$. While the discrete Laplacian~\eqref{eq:def_p-lap} is linear for $p=2$ and inequality~\eqref{eq:p-hrb_ineq_gener} can be understood via quadratic forms on a Hilbert space, the current work for general $p>1$ represents a transition to a \textbf{non-linear} setting.
We highlight here the main differences and challenges of this transition.

First, the crucial upper and lower bounds from~\eqref{eq:thm:p-Hardy_l} hold as identities when $p=2$, where, in addition, the error terms can be interpreted as non-negative linear operators; see \cite[Thm.~1]{sta-wac_26}. This identity provides a more direct control of the key central term and allows for an extension lemma (see \cite[Lemma~10]{sta-wac_26acc}) that is unavailable in the non-linear case $p>1$. Otherwise, the proofs of Theorems~\ref{thm:p-Hardy_l}--\ref{thm:optimal_p-Hardy} remain methodologically similar to the linear case.

Another essential innovation of the present work is the explicit choice of the parameter sequence~\eqref{eq:def:g} that meets all the assumptions of Theorem~\ref{thm:optimal_p-Hardy}, which was discovered by an experienced guess. This choice would have been scarcely possible without insights from the linear case, even though the $p=2$ version of \eqref{eq:def:g} does not appear in \cite{sta-wac_26acc}. Furthermore, the proof of Theorem~\ref{thm:optimal_rho_LB_form} relies on the properties of (logarithmically) completely monotone sequences and is incomparable with its much simpler counterpart for $p=2$ or the similar result from \cite[Thm.~5]{sta-wac_26acc}. The new connection between complete monotonicity and optimal weights of Hardy type (whose even stronger form was crucial in our work~\cite{sta-wac_26}) is not yet fully understood and might represent a promising avenue for future research.

\vskip8pt
\noindent\textbf{Other relevant works:}
Diverse definitions exist for powers of the discrete Laplacian, even within the linear setting $p=2$ (see \cite[Sec.~1.4]{sta-wac_26acc}). The definition used here preserves the Toeplitz structure of the discrete Laplacian on $\N$, which makes it a close analogy to its continuous counterpart. Alternative definitions for the fractional Laplacian $(-\Delta)^{\alpha}$ on $\N$, with possibly non-integral $\alpha>0$, have been explored and Hardy-type inequalities established in \cite{ger-kre-sta_25b}. Their optimal improvements, for $0<\alpha<1$, were found recently in \cite{das-fue_26}. On the full line $\Z$, optimal discrete fractional Hardy inequalities were found in \cite{cia-ron_18}, with their optimality later verified in \cite{kel-nit_23}.

Other works devoted to discrete Hardy and related inequalities include \cite{das-etal_25, das-man-pau_25, gup_24} in the one-dimensional setting, and \cite{gup_23, gup_24b, guz-kap-lap_22, kap-lap_16, kel-lem_23} in higher dimensions. To our best knowledge, all these works assume the linear framework ($p=2$).

\section{Proofs for the abstract framework}\label{sec:proofs_abstract}

In this section, we subsequently prove Theorems~\ref{thm:p-Hardy_1} and \ref{thm:p-Hardy_l}--\ref{thm:optimal_p-Hardy}.

\subsection{Proof of Theorem~\ref{thm:p-Hardy_1}}
Before proving Theorem~\ref{thm:p-Hardy_1}, we deduce auxiliary but fundamental inequalities in the next lemma.
The lemma can be viewed as an extension of the non-trivial result \cite[Lemma 3.8]{fis_23}, established by Fischer in the real domain, to the complex domain.

\begin{lemma}
    \label{lem:ineq}
    Let $t\in[0,1]$ and $z\in\C$. If $p\in(1,2]$, then we have the inequalities
    \begin{equation}
        t|z-1|^2(|z-t|+1-t)^{p-2} 
           \lesssim |z-t|^p - (1-t)^{p-1}(|z|^p-t) 
           \lesssim t^{p/2}|z-1|^p. \label{eq:lem:ineq_p_in_(1,2]}
    \end{equation}
    Conversely, if $p>2$, both inequalities in \eqref{eq:lem:ineq_p_in_(1,2]} are reversed.
    (If $z=t=1$, the left-hand side of \eqref{eq:lem:ineq_p_in_(1,2]} is to be understood as $0$.)
\end{lemma}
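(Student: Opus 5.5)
The plan is to recognise the middle expression in \eqref{eq:lem:ineq_p_in_(1,2]} as a rescaled Jensen gap of the convex function $\phi(x)=|x|^p$ on $\C\simeq\R^2$. For $t\in[0,1)$ set $w:=(z-t)/(1-t)$, so that
\[
z=t\cdot 1+(1-t)\,w .
\]
A direct computation gives
\[
|z-t|^p-(1-t)^{p-1}(|z|^p-t)=(1-t)^{p-1}\Bigl[t\,\phi(1)+(1-t)\,\phi(w)-\phi\bigl(t\cdot1+(1-t)w\bigr)\Bigr].
\]
So the middle term is $(1-t)^{p-1}$ times the gap $J$ in Jensen's inequality for $\phi$ at the points $a=1$, $b=w$ with weights $t$ and $1-t$. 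In particular it is non-negative. The endpoint $t=1$ is handled separately: there the middle term equals $|z-1|^p$, and both outer expressions also equal $|z-1|^p$ up to constants (with the stated convention when $z=1$). The case $t=0$ is trivial, since all three expressions vanish.

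The core step is the two-sided estimate
\begin{equation*}
J\;\approx\;t(1-t)\,|a-b|^2\,(|a|+|b|)^{p-2},
\end{equation*}
with constants depending only on $p$ and valid for all $p>1$. I would prove it via Taylor's formula with integral remainder along the segment $x(s)=a+s(b-a)$. This gives $J=|a-b|^2\int_0^1 G_\mu(s)\,\langle D^2\phi(x(s))e,e\rangle\,\dd s$, where $\mu=1-t$, $e$ is the unit direction of $b-a$, and $G_\mu(s)=\min\{s(1-\mu),\mu(1-s)\}$ is the Green kernel. Since $\langle D^2\phi(x)e,e\rangle\approx|x|^{p-2}$ uniformly in $e$ (its eigenvalues are $p|x|^{p-2}$ and $p(p-1)|x|^{p-2}$), everything reduces to showing
\[
\int_0^1 G_\mu(s)\,|x(s)|^{p-2}\,\dd s\approx \mu(1-\mu)\,(|a|+|b|)^{p-2}.
\]
This is the step I expect to be the main obstacle. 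The unweighted version $\int_0^1|x(s)|^{p-2}\,\dd s\approx(|a|+|b|)^{p-2}$ is classical for $p>1$, because the singularity of order $p-2>-1$ is integrable. The difficulty is uniformity in $\mu$ when $\mu$ or $1-\mu$ is small and the segment passes near the origin close to an endpoint. I would split $[0,1]$ at $\mu$, use $G_\mu\le\mu(1-\mu)$ together with the unweighted bound for one direction, and for the other direction restrict to a subinterval of length comparable to $1$ on which $G_\mu\gtrsim\mu(1-\mu)$ and $|x(s)|\approx|a|+|b|$. Such a subinterval exists because $|x(s)|$ can be small only on a set of $s$ of measure $\lesssim\frac12$ in relative terms. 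The exponent sign ($p<2$ versus $p>2$) only flips which direction uses which estimate.

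Substituting $a=1$, $b=w$, $|a-b|=|z-1|/(1-t)$ and $1+|w|=(1-t+|z-t|)/(1-t)$ into the estimate for $J$, and multiplying by $(1-t)^{p-1}$, makes the powers of $1-t$ cancel exactly. This yields
\[
|z-t|^p-(1-t)^{p-1}(|z|^p-t)\approx t\,|z-1|^2\,(|z-t|+1-t)^{p-2},
\]
which is the left inequality for $p\in(1,2]$, and its reverse for $p>2$. For the remaining bound I use $|z-1|\le|z-t|+1-t$ and $\sqrt t\le1$, which give $\sqrt t\,|z-1|\le |z-t|+1-t$. For $p\le2$, raising this to the non-negative power $2-p$ yields
\[
t|z-1|^2(|z-t|+1-t)^{p-2}\le t|z-1|^2(\sqrt t\,|z-1|)^{p-2}=t^{p/2}|z-1|^p .
\]
For $p>2$ the same comparison, now with a positive exponent, gives the reversed inequality. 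This completes the chain in both regimes.
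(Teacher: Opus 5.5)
Your proof is correct, but it takes a genuinely different route from the paper. The paper proves no real-variable estimate itself. It takes Fischer's inequalities for $x\in\R$ as given and only performs the passage to $z\in\C$, writing $z=r\ee^{\ii\theta}$ and $u=\cos\theta$. For the bound involving $t|z-1|^2(|z-t|+1-t)^{p-2}$, it shows that an auxiliary function of $u$ is monotone on $[-1,1]$, so its extremum sits at the real point $u=1$. For the bound involving $t^{p/2}|z-1|^p$, it splits $|z-t|^2=|r-t|^2+2rt(1-u)$ and uses $2^{s-1}(a^s+b^s)\le(a+b)^s\le a^s+b^s$ to reduce to the real case. You instead identify the middle term as $(1-t)^{p-1}$ times a Jensen gap of $|x|^p$, and estimate that gap directly via the Green-kernel representation and $\langle D^2\phi(x)e,e\rangle\approx|x|^{p-2}$. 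This buys three things. The argument is self-contained: it re-proves the real case rather than citing it. It does not depend on whether the ambient space is $\R$, $\C$ or $\R^d$. And it gives the stronger two-sided comparison of the middle term with $t|z-1|^2(|z-t|+1-t)^{p-2}$ for every $p>1$, so the $t^{p/2}|z-1|^p$ bound becomes a one-line corollary of $\sqrt{t}\,|z-1|\le|z-t|+1-t$. The paper's proof is shorter only because the hard real-variable estimate is outsourced to the citation. When you write it up, two things need stating. First, in each regime one direction of the kernel estimate is immediate from $|x(s)|\le|a|+|b|$ together with $\int_0^1 G_\mu=\mu(1-\mu)/2$. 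Second, make the remaining direction for $p>2$ precise. Since $|x(s)|^2=d^2+|b-a|^2(s-s^*)^2$, the set where $|x(s)|<\frac{1}{32}(|a|+|b|)$ has measure at most $\frac14$, and it is empty if $|b-a|<\frac14(|a|+|b|)$. Meanwhile $G_\mu\ge\frac14\mu(1-\mu)$ on $[\mu/4,\,1-(1-\mu)/4]$, an interval of length $\frac34$. Also note that for $p<2$ the integral Taylor formula remains valid when the segment meets the origin, because $|x(s)|^{p-2}$ is integrable there.
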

\begin{proof}
    The proof is divided into two parts, in which we establish
    \begin{enumerate}[a)]
        \item \label{enum:pr:lem:a}
            the lower bound for $p\in(1,2]$, and, respectively, the upper bound for $p>2$,
        \item \label{enum:pr:lem:b}
            the upper bound for $p\in(1,2]$, and, respectively, the lower bound for $p>2$,
    \end{enumerate}
    for the middle term in \eqref{eq:lem:ineq_p_in_(1,2]}. In both cases, the strategy is to extend the corresponding real-variable estimates, originally formulated in \cite[Lemma~3.8]{fis_23}, to the complex setting. However, the methods employed in \ref{enum:pr:lem:a}) and \ref{enum:pr:lem:b}) differ. Therefore they are treated separately.

    \ref{enum:pr:lem:a}) Suppose $p\in(1,2]$. By \cite[Eq.~(3.8)]{fis_23}, there exists $C_p>0$, such that the inequality
    \begin{equation}
        \label{eq:lem:pr:fisher:a}
        C_p\, t|x-1|^2(|x-t|+1-t)^{p-2} \leq |x-t|^p - (1-t)^{p-1}(|x|^p - t)
    \end{equation}
    holds for all $t\in[0,1]$ and $x\in\R$. For $z\in\C$, write $z=r\ee^{\ii\theta}$, where $r\geq0$ and $\theta\in[0,2\pi)$. Next, for $u:=\cos\theta\in[-1,1]$, denote
    \[
        A(u):=|z-t|=\sqrt{r^2-2rtu+t^2}
        \quad\text{ and }\quad
        B(u):=|z-1|=\sqrt{r^2-2ru+1}.
    \]
    We will show that the auxiliary function
    \[
        F(u):=A^p(u)-(1-t)^{p-1}(r^p-t)-C_p \,t B^2(u) (A(u)+1-t)^{p-2},
    \]
    defined on the interval $[-1,1]$, is non-increasing, and hence attains its minimum at $u=1$. Since $F(1)\ge0$ by inequality \eqref{eq:lem:pr:fisher:a}, it follows that $F(u)\ge0$ for all $u\in[-1,1]$, which amounts to the lower estimate in \eqref{eq:lem:ineq_p_in_(1,2]}. Indeed, a routine calculation yields
    \begin{equation}
        \label{eq:lem:der}
        \begin{aligned}
            F'&=-prtA^{p-2}+C_prt\left[2(A+1-t)^{p-2}+(p-2)tA^{-1}B^2(A+1-t)^{p-3}\right] \\
              &\le rt\left[2C_p(A+1-t)^{p-2}-pA^{p-2}\right] \le rtA^{p-2}(2 C_p-p)\le0
        \end{aligned}
    \end{equation}
    on $(-1,1)$, where we used the fact that $p\in(1,2]$ twice and, without loss of generality, assumed that $C_p\le p/2$ since the constant $C_p$ in \eqref{eq:lem:pr:fisher:a} can be taken as small as needed.
    
    The case $p>2$ can be treated in an analogous manner. By~\cite[Eq.~(3.8)]{fis_23}, \eqref{eq:lem:pr:fisher:a} holds with the reversed inequality and an arbitrarily large constant $C_p>0$. Similarly, the estimates in~\eqref{eq:lem:der} hold with the reversed inequalities if $p>2$. Therefore the auxiliary function $F$ attains its maximum on $[-1,1]$ at $u=1$ with $F(1)\leq0$. Hence $F(u)\leq0$ for all $u\in[-1,1]$.

    \ref{enum:pr:lem:b}) Suppose $p\in(1,2]$. By \cite[Eq. (3.10)]{fis_23}, there exists $C_p>0$ such that the inequality
    \begin{equation}
        \label{eq:lem:pr:fisher:b}
        |x-t|^p - (1-t)^{p-1}(|x|^p - t)\leq C_p\, t^{p/2}|x-1|^p
    \end{equation}
    holds for all $t\in[0,1]$ and $x\in\R$. We shall also make use of the elementary inequalities 
    \begin{equation}\label{eq:aux_ineq_ab}
        2^{s-1} (a^s + b^s) \leq (a + b)^s \leq a^s + b^s,
    \end{equation}
    which hold for all $a,b\geq 0$ and $s\in(0,1]$.
    Writing $z=r\ee^{\ii\theta}$ and $u=\cos\theta$ as before, we infer from~\eqref{eq:aux_ineq_ab} with $s:=p/2\in(0,1]$ the estimates
    \begin{align}
        |z-t|^p=\left[|r - t|^2 + 2rt(1-u)\right]^{p/2} &\leq |r-t|^p + (2rt)^{p/2}(1-u)^{p/2}, \label{eq:impl:1} \\
        |z-1|^p= \left[|r - 1|^2 + 2r(1-u)\right]^{p/2} &\geq 2^{p/2-1}\left(|r-1|^p+(2r)^{p/2}(1-u)^{p/2}\right).\label{eq:impl:2}
    \end{align}
    Without loss of generality, we may assume that $C_p\ge1$ in~\eqref{eq:lem:pr:fisher:b}. Then estimates \eqref{eq:impl:1}, \eqref{eq:lem:pr:fisher:b}, and \eqref{eq:impl:2} imply
    \begin{align*}
        |z-t|^p-(1-t)^{p-1}(|z|^p-t) 
          &\leq |r-t|^p -(1-t)^{p-1}(r^p-t) + (2rt)^{p/2}(1-u)^{p/2} \\
          &\leq C_p\, t^{p/2} \left(|r-1|^p +(2r)^{p/2}(1-u)^{p/2}\right) \\ 
          &\leq 2^{1-p/2} C_p\, t^{p/2}|z-1|^p,
    \end{align*}
    which is the desired upper bound in~\eqref{eq:lem:ineq_p_in_(1,2]}.

    If $p>2$, once more, all the inequalities \eqref{eq:lem:pr:fisher:b}, \eqref{eq:impl:1}, and \eqref{eq:impl:2} are reversed, and the inverted final estimates hold, provided that $C_p>0$ is chosen sufficiently small. The proof of Lemma \ref{lem:ineq} is complete.
\end{proof}

\begin{proof}[Proof of Theorem~\ref{thm:p-Hardy_1}]
    Suppose $\g\in H^1$ is a non-decreasing sequence such that $\g_n>0$ for all $n\geq 1$ and $u\in\H_0^1$. We will show that for all $n\ge2$, we have the inequalities
\begin{equation} \label{eq:pr:thm:p-Hardy_1:1}
    \mathcal{R}(q;\g,u)_{n-1}
      \lesssim |\grad u_n|^p-\left(\frac{|u_n|^p}{\g_n^{p-1}}-\frac{|u_{n-1}|^p}{\g_{n-1}^{p-1}}\right)(\grad\g_n)^{p-1}
      \lesssim \mathcal{R}(p;\g,u)_{n-1},
\end{equation}
where the remainders are given by the formula~\eqref{eq:thm:p-Hardy_1_rem} and the unspecified constants depend only on $p$. Then, multiplying both sides of~\eqref{eq:pr:thm:p-Hardy_1:1} by the given $V_n\geq0$ and summing over $n$ from $2$ to $\infty$, we obtain inequalities
\[
     \sum_{n=2}^{\infty} V_n|\grad u_n|^p + \sum_{n=1}^{\infty} V_{n+1} (\grad\g_{n+1})^{p-1} \frac{|u_n|^p}{\g_n^{p-1}} - \sum_{n=2}^{\infty} V_n (\grad\g_n)^{p-1} \frac{|u_n|^p}{\g_n^{p-1}}
     \begin{cases}
         \gtrsim\displaystyle\sum_{n=2}^{\infty} V_n \mathcal{R}(q;\g,u)_{n-1}, \\[14pt]
         \lesssim\displaystyle\sum_{n=2}^{\infty} V_n \mathcal{R}(p;\g,u)_{n-1},
     \end{cases}
\]
which are equivalent to
\[
    \sum_{n=2}^{\infty} V_n|\grad u_n|^p + \sum_{n=2}^{\infty}\frac{\ddiv(V(\grad\g)^{p-1})_n}{\g_n^{p-1}} |u_n|^p + V_{2}(\grad\g_{2})^{p-1} \frac{|u_1|^p}{\g_1^{p-1}}
    \begin{cases}
         \gtrsim\displaystyle\sum_{n=1}^{\infty} V_{n+1} \mathcal{R}(q;\g,u)_{n}, \\[14pt]
         \lesssim\displaystyle\sum_{n=1}^{\infty} V_{n+1} \mathcal{R}(p;\g,u)_{n}.
     \end{cases}
\]
Taking also into account that $u_0=\g_0=0$ and $\g_1>0$, we infer the equality
\[
    V_1|\grad u_1|^p = V_1(\grad\g_{1})^{p-1} \frac{|u_1|^p}{\g_1^{p-1}}.
\]
Adding and subtracting this term on the left-hand side of the latter inequality, we establish \eqref{eq:thm:p-Hardy_1}.

To complete the proof of Theorem~\ref{thm:p-Hardy_1}, it remains to verify~\eqref{eq:pr:thm:p-Hardy_1:1}. We shall confine our attention to the case $p \in (1,2]$. The case $p \in (2,\infty)$ is to be treated analogously; it only uses reversed forms of inequalities \eqref{eq:lem:ineq_p_in_(1,2]} and \eqref{eq:pr:thm:p-Hardy_1:2}, \eqref{eq:pr:thm:p-Hardy_1:3} below. For $p\in(1,2]$, inequalities~\eqref{eq:pr:thm:p-Hardy_1:1} read
\begin{equation}
    \label{eq:pr:thm:p-Hardy_1:2}
    |\grad u_n|^p-\left(\frac{|u_n|^p}{\g_n^{p-1}}-\frac{|u_{n-1}|^p}{\g_{n-1}^{p-1}}\right)\!(\grad\g_n)^{p-1}\!
    \begin{cases}
        \gtrsim \!\left|\sqrt{\frac{\g_{n-1}}{\g_{n}}}u_{n}\!-\!\sqrt{\frac{\g_{n}}{\g_{n-1}}}u_{n-1}\right|^2 \!\left(|\grad u_n|\!+\!\frac{|u_{n-1}|}{\g_{n-1}}\grad\g_n\right)^{p-2}\!,\\[10pt]
        \lesssim \!\left|\sqrt{\frac{\g_{n-1}}{\g_{n}}}u_{n}\!-\!\sqrt{\frac{\g_{n}}{\g_{n-1}}}u_{n-1}\right|^p\!.
    \end{cases}
\end{equation}

We first consider the case $u_{n-1}=0$. If also $u_n=0$, then~\eqref{eq:pr:thm:p-Hardy_1:2} holds trivially. If $u_n\neq0$, then~\eqref{eq:pr:thm:p-Hardy_1:2} stems from inequalities
\begin{equation}
    \label{eq:pr:thm:p-Hardy_1:3}
    (p-1)t \le 1 - (1-t)^{p-1} \le t \le t^{p/2},
\end{equation}
which hold for every $t\in[0,1]$, upon substituting $t=\g_{n-1}/\g_n\in(0,1]$, where the last inclusion is ensured by the monotonicity of $\g$.

Next, suppose $u_{n-1}\neq0$. Set temporarily $h_n:= u_n/\g_n$ and substitute for $z=h_n/h_{n-1}\in\C$ and $t=\g_{n-1}/\g_n\in(0,1]$ into~\eqref{eq:lem:ineq_p_in_(1,2]}. It yields the inequalities
\begin{align*}
    \frac{\g_{n-1}}{\g_n} &\left|\frac{h_n}{h_{n-1}}-1\right|^2\left(\left|\frac{h_n}{h_{n-1}}-\frac{\g_{n-1}}{\g_n}\right|+1-\frac{\g_{n-1}}{\g_n}\right)^{p-2} \\
      &\lesssim \left|\frac{h_n}{h_{n-1}}-\frac{\g_{n-1}}{\g_n}\right|^p - \left(1-\frac{\g_{n-1}}{\g_n}\right)^{p-1} \left(\left|\frac{h_n}{h_{n-1}}\right|^p-\frac{\g_{n-1}}{\g_n}\right) \lesssim\left(\frac{\g_{n-1}}{\g_n}\right)^{p/2}\left|\frac{h_n}{h_{n-1}}-1\right|^p.
\end{align*}
Finally, multiplying these inequalities by $\g_n^p|h_{n-1}|^p$ implies~\eqref{eq:pr:thm:p-Hardy_1:2}. The proof of Theorem~\ref{thm:p-Hardy_1} is complete.
\end{proof}

\subsection{Proof of Theorem~\ref{thm:p-Hardy_l}}

We will make use of the \textit{forward shift operator} $\shift$ defined on $C(\Z)$ by
\begin{equation}
    \label{eq:def:shift_mid}
    \shift u_n:= u_{n+1} \quad\text{ for all } n\in\Z,
\end{equation}
and utilize the obvious identities $\ddiv=\shift-\id$, $\grad=\id-\shift^{-1}$, and $\ddiv=\grad\shift=\shift\grad$, whenever needed.

\begin{proof}[Proof of Theorem~\ref{thm:p-Hardy_l}]
The proof proceeds by induction in $\ell\in\N$. The base case $\ell=1$ is precisely Theorem~\ref{thm:p-Hardy_1} with $V\equiv1$, as the assumption~\eqref{assump:1} implies that $\g\in H^1$ satisfies $\g_{n+1}>\g_{n}>0$ for all $n\in\N$. 

Suppose $\ell\ge2$. First notice that
\[
    \sum_{n=\ell}^{\infty}\bigl|\grad^\ell u_n\bigr|^p = \sum_{n=\ell-1}^{\infty}\bigl|\grad^{\ell-1}\ddiv u_n\bigr|^p
\]
for any $u\in\H_0^\ell$. Second, given $\g\in H^\ell$ satisfying assumptions~\eqref{assump:1} and~\eqref{assump:2}, we infer that $\ddiv\g=\shift\grad\g\in H^{\ell-1}$ satisfies these assumptions with $\ell$ replaced by $\ell-1$. Then the induction hypothesis 
applied to $\ddiv u\in\H_0^{\ell-1}$ in place of $u$ and $\ddiv \g$ in place of $\g$ yields
\[
\sum_{n=\ell}^{\infty}\bigl|\grad^\ell u_n\bigr|^p - \sum_{n=\ell-1}^{\infty}\frac{-\lap_p^{(\ell-1)}\ddiv\g_n}{(\ddiv\g_n)^{p-1}}|\ddiv u_n|^p
      \begin{cases}
         \gtrsim\displaystyle\sum_{k=1}^{\ell-1} R_k^{(\ell-1)}(q;\ddiv\g,\ddiv u), \\[14pt]
         \lesssim\displaystyle\sum_{k=1}^{\ell-1} R_k^{(\ell-1)}(p;\ddiv\g,\ddiv u).
     \end{cases}
\]
Recalling definitions of the remainders \eqref{eq:thm:p-Hardy_l_rem} and \eqref{eq:thm:p-Hardy_1_rem}, we observe that, for any $s>1$ and $k\in\{1,\dots,\ell-1\}$, we have $R_k^{(\ell-1)}(s;\ddiv \g,\ddiv u)=R_{k+1}^{(\ell)}(s;\g,u)$, and so we obtain
\begin{equation}
    \label{eq:pr:thm:3:1}
      \sum_{k=2}^{\ell} R_k^{(\ell)}(q;\g,u)\lesssim
      \sum_{n=\ell}^{\infty}\bigl|\grad^\ell u_n\bigr|^p - \sum_{n=\ell-1}^{\infty}\frac{-\lap_p^{(\ell-1)}\ddiv\g_n}{(\ddiv\g_n)^{p-1}}|\ddiv u_n|^p
      \lesssim\displaystyle\sum_{k=2}^{\ell} R_k^{(\ell)}(p;\g,u).
\end{equation}

Next, using the shift operator~\eqref{eq:def:shift_mid}, we shift the summation index in the second sum of the middle term from \eqref{eq:pr:thm:3:1} as
\[
    \sum_{n=\ell-1}^{\infty} \frac{-\lap_p^{(\ell-1)}\ddiv\g_n}{(\ddiv\g_n)^{p-1}}|\ddiv u_n|^p = \sum_{n=1}^{\infty} \frac{\shift^{\ell-1}(-\lap_p^{(\ell-1)}\grad\g)_n}{(\shift^{\ell-1}\grad\g_n)^{p-1}}\bigl|\grad\shift^{\ell-1} u_n\bigr|^p,
\]
and apply Theorem~\ref{thm:p-Hardy_1} with $\shift^{\ell-1}u\in\H_0^{1}$, non-decreasing sequence $\shift^{\ell-1}\g\in H^1$, and the weight
\[
    V_n:=\frac{\shift^{\ell-1}(-\lap_p^{(\ell-1)}\grad\g)_n}{(\shift^{\ell-1}\grad\g_n)^{p-1}},
\]
which is non-negative for all $n\ge1$ by assumption~\eqref{assump:2}. The resulting inequalities read
\[
    \sum_{n=1}^{\infty} V_n \bigl|\grad\shift^{\ell-1} u_n\bigr|^p+\sum_{n=1}^{\infty}\frac{\ddiv(V(\grad\shift^{\ell-1}\g)^{p-1})_n}{(\shift^{\ell-1}\g_n)^{p-1}}\bigl|\shift^{\ell-1} u_n\bigr|^p
    \begin{cases}
        \gtrsim \displaystyle\sum_{n=1}^{\infty}V_{n+1}\mathcal{R}(q;\shift^{\ell-1}\g,\shift^{\ell-1} u)_n, \\[14pt]
        \lesssim \displaystyle\sum_{n=1}^{\infty}V_{n+1}\mathcal{R}(p;\shift^{\ell-1}\g,\shift^{\ell-1} u)_n.
    \end{cases}
\]
Inspecting the prefactor in the second sum, we find
\[
    \ddiv(V(\grad\shift^{\ell-1}\g)^{p-1})
      =\ddiv\left(\frac{\shift^{\ell-1}(-\lap_p^{(\ell-1)}\grad\g)}{(\shift^{\ell-1}\grad\g)^{p-1}}(\shift^{\ell-1}\grad\g)^{p-1}\right)=-\shift^{\ell-1}(-\lap_p^{(\ell)}\g).
\]
Therefore, bearing in mind definition~\eqref{eq:thm:p-Hardy_l_rem}, the last inequalities rewrite as
\[
    R_1^{(\ell)}(q;\g,u)
      \lesssim \sum_{n=\ell-1}^{\infty}\frac{-\lap_p^{(\ell-1)}\ddiv\g_n}{(\ddiv\g_n)^{p-1}}|\ddiv u_n|^p - \sum_{n=\ell}^{\infty}\frac{-\lap_p^{(\ell)}\g_n}{\g_n^{p-1}}|u_n|^p
      \lesssim {R}_1^{(\ell)}(p;\g,u).
\]
Combining these inequalities with \eqref{eq:pr:thm:3:1} implies the desired estimates~\eqref{eq:thm:p-Hardy_l}.
The proof of Theorem~\ref{thm:p-Hardy_l} is complete.
\end{proof}

\subsection{Proof of Theorem~\ref{thm:p-Hardy_weight}}
Assumptions \eqref{assump:1}, \eqref{assump:2} guarantee non-negativity of the remainder terms in~\eqref{eq:thm:p-Hardy_l}, see \eqref{eq:thm:p-Hardy_l_rem}. In particular, the non-negative lower bound implies that inequality~\eqref{eq:p-hrb_ineq_gener} holds for all $u\in\H_0^\ell$ with the weight $\rho(\g)=-\lap_p^{(\ell)}\g/\g^{p-1}$, which is also non-negative by assumption \eqref{assumpp:3}. Consequently, $\rho(\g)$ is the discrete $p$-Hardy--Rellich--Birman weight.

It remains to prove the non-attainability of $\rho(\g)$ in $\H_0^\ell$. Suppose that $u\in\H_0^\ell$ satisfies
\[
    \sum_{n=\ell}^{\infty} \bigl|\grad^\ell u_n\bigr|^p=\sum_{n=\ell}^{\infty} \rho_n(\g)|u_n|^p.
\]
It follows from~\eqref{eq:thm:p-Hardy_l}, bearing the non-negativity of remainder terms $R_k^{(\ell)}(q;\g,u)$ in mind, that  
$R_k^{(\ell)}(q;\g,u) = 0$ for all $k \in \{1,\dots,\ell\}$. In particular, for $k=\ell$, we derive a necessary condition (which is, in fact, sufficient)
\begin{equation}
    \mathcal{R}(q;\grad^{\ell-1}\g,\grad^{\ell-1} u)_n = 0
    \quad\text{ for all }n\ge\ell;
\label{eq:recur_weak-non-att_inproof}
\end{equation}
see definition~\eqref{eq:thm:p-Hardy_l_rem}. If $p \in (1,2)$ (and hence $q>2$), assumption~\eqref{assump:1} implies, in addition, that both $\grad^{\ell-1}\g_n$ and $\grad^\ell\g_n$ are strictly positive for all $n \ge \ell$. Thus, if the second factor on the right-hand side in~\eqref{eq:thm:p-Hardy_1_rem} vanishes, then $\grad^{\ell-1}u_n = \grad^{\ell-1}u_{n+1} = 0$. In any case, regardless of $p<2$ or $p\geq2$, we conclude that the sequence $u\in\H_0^\ell$ satisfies the $\ell$-th order difference equation 
\[
    \sqrt{\frac{\grad^{\ell-1}\g_n}{\grad^{\ell-1}\g_{n+1}}}\grad^{\ell-1}u_{n+1} - \sqrt{\frac{\grad^{\ell-1}\g_{n+1}}{\grad^{\ell-1}\g_{n}}}\grad^{\ell-1}u_{n} = 0
    \quad\text{ for all }n\ge\ell,
\]
with the boundary conditions $u_1=\dots=u_{\ell-1}=0$. This equation has a unique solution up to a multiplicative constant, which is obviously $u=c\,\g$, with a constant $c\in\C$. By assumption~\eqref{assump:1} for $k=0$, $\g_n>0$ for all $n\ge\ell$, therefore the sequence $u$ has a compact support only if $c=0$, which means that $u\equiv0$. The proof of Theorem~\ref{thm:p-Hardy_weight} is complete.
\qed

\subsection{Proof of Theorem~\ref{thm:optimal_p-Hardy}}

The proof is divided into three distinct parts, in which we demonstrate that the weight $\rho(\g) = -\lap_p^{(\ell)}\g / \g^{p-1}$ satisfies the three properties of optimality (see Definition~\ref{def:opt}): 
\begin{center}
    a) criticality; \hskip6pt b) non-attainability; \hskip6pt  c) optimality near infinity. 
\end{center}
The proof relies on the two-sided control~\eqref{eq:thm:p-Hardy_l} and the fact that the parameter sequence $u=\g$ annihilates the remainder terms. While the upper bound is essential for proving the criticality and optimality of $\rho(\g)$ near infinity, the lower bound is crucial for the proof of the non-attainability of $\rho(\g)$.

The proof is inspired by ideas employed in~\cite{sta-wac_26acc} for the linear case $p=2$, and it uses the following elementary lemmas which were proven therein; see \cite[Lemmas 7--9]{sta-wac_26acc}. In the course of the proof, we will make use of the \textit{middle operator} $\midd$ defined on $C(\Z)$ by
\begin{equation}
    \label{eq:def:mid}
    \midd u_n:=\frac{u_{n+1}+u_{n}}{2} \quad\text{ for all } n\in\Z.
\end{equation}

\begin{lemma} \label{lem:asymptotic_beh}
    Let $k\in\N$, $\lambda\in\R\setminus\N_0$, and $g$ be a sequence with the asymptotic expansion
    \begin{equation} \label{eq:lem:asymtotic_beh}
        g_{n}=\sum_{j=0}^{k}a_{j}n^{\lambda-j}+\bigO{n^{\lambda-k-1}}, \quad\text{ as } n\to\infty,
    \end{equation}
    for some $a_{j}\in\R$ with $a_{0}\neq0$. Then the following claims hold true.
    \begin{enumerate}[(i)]
    \item For all $m\in\Z$, there exist coefficients $a_{j}^{(m)}\in\R$ such that 
    \[
        \shift^{m} g_{n} = a_{0}n^{\lambda} + \sum_{j=1}^{k} a_{j}^{(m)} n^{\lambda-j} + \bigO{n^{\lambda-k-1}}, \quad\text{ as } n\to\infty.
    \] 
    \item For all $m\in\N_{0}$, there exist coefficients $b_{j}^{(m)}\in\R$ with $b_{0}^{(m)}\neq0$ such that 
    \[
        \midd^{m} g_{n}= \sum_{j=0}^{k} b_{j}^{(m)} n^{\lambda-j} + \bigO{n^{\lambda-k-1}}, \quad\text{ as } n\to\infty.
    \] 
    \item For all $m\in\{0,\dots,k\}$, there exist coefficients $c_{j}^{(m)}\in\R$ with $c_{m}^{(m)}\neq0$ such that 
    \[
        \ddiv^{m} g_{n}= \sum_{j=m}^{k} c_{j}^{(m)} n^{\lambda-j} + \bigO{n^{\lambda-k-1}}, \quad\text{ as } n\to\infty.
    \] 
    \end{enumerate}
\end{lemma}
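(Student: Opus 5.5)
The three claims are elementary consequences of binomial (Taylor) expansions of shifted powers $(n+m)^{\mu}$ for large $n$. The plan is to prove (i) first, deduce (ii) and (iii) from it, and handle $\midd^m$ and $\ddiv^m$ by induction in $m$.

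For (i), fix $m\in\Z$ and apply the hypothesis \eqref{eq:lem:asymtotic_beh} at the index $n+m$:
\[
\shift^m g_n=\sum_{j=0}^{k}a_j (n+m)^{\lambda-j}+\bigO{(n+m)^{\lambda-k-1}}.
\]
Since $(n+m)^{\lambda-k-1}\sim n^{\lambda-k-1}$, the error term is $\bigO{n^{\lambda-k-1}}$. Each power expands for $n>|m|$ as
\[
(n+m)^{\lambda-j}=n^{\lambda-j}\sum_{i=0}^{k-j}\binom{\lambda-j}{i}\frac{m^i}{n^i}+\bigO{n^{\lambda-k-1}}.
\]
Collecting the terms with the power $n^{\lambda-j}$ gives coefficients $a_j^{(m)}=\sum_{i\le j}a_{j-i}\binom{\lambda-j+i}{i}m^i$. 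In particular $a_0^{(m)}=a_0$, which is the claim.

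For (ii), use $\midd=\tfrac12(\shift+\id)$, so that $\midd^m=2^{-m}\sum_{i=0}^m\binom{m}{i}\shift^i$. By (i), each $\shift^i g$ has an expansion of the same form with leading coefficient $a_0$. Summing, $\midd^m g$ has such an expansion with leading coefficient $b_0^{(m)}=2^{-m}\sum_i\binom{m}{i}a_0=a_0\neq0$.

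For (iii), first handle $m=1$: $\ddiv g=\shift g-g$. By (i) the $n^{\lambda}$ terms cancel, leaving $\sum_{j=1}^k(a_j^{(1)}-a_j)n^{\lambda-j}+\bigO{n^{\lambda-k-1}}$. The coefficient of $n^{\lambda-1}$ is $a_1^{(1)}-a_1=a_0\lambda$, which is nonzero because $\lambda\neq0$.

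For general $m$ the main obstacle is bookkeeping: I need the nonvanishing of the leading coefficient to persist under iteration, while the $\bigO{\cdot}$ error never improves. So I would not iterate naively from the statement with the top index shifted down. Instead, compute $\ddiv^m g=\sum_{i=0}^m(-1)^{m-i}\binom{m}{i}\shift^i g$ directly from the expansions in (i). The coefficient of $n^{\lambda-j}$ is
\[
\sum_{r\le j}a_{j-r}\binom{\lambda-j+r}{r}\sum_i(-1)^{m-i}\binom{m}{i}i^r .
\]
The inner sum vanishes for $r<m$, since it is an $m$-th finite difference of a polynomial of degree $r<m$. Hence all coefficients with $j<m$ vanish. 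For $j=m$ only $r=m$ survives, and the inner sum equals $m!$, so
\[
c_m^{(m)}=a_0\,m!\binom{\lambda}{m}=a_0\,\lambda(\lambda-1)\cdots(\lambda-m+1),
\]
which is nonzero precisely because $\lambda\notin\N_0$. The remaining terms $j=m+1,\dots,k$ and the $\bigO{n^{\lambda-k-1}}$ error are inherited from (i). This requires $m\le k$, which is the stated range.
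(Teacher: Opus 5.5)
Your proof is correct and complete. The paper does not prove this lemma itself. It imports it, together with the mean value and Leibniz lemmas, from Lemmas~7--9 of the authors' earlier paper on the linear case $p=2$, so there is no in-paper argument to compare against, and your write-up works as a self-contained verification.

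Your route is sound:
\begin{itemize}
\item For (i), you expand $(n+m)^{\lambda-j}$ binomially, and collecting terms gives $a_0^{(m)}=a_0$.
\item For (ii), the identity $\midd^m=2^{-m}\sum_i\binom{m}{i}\shift^i$ reduces everything to (i), with $b_0^{(m)}=a_0$.
\item For (iii), you write $\ddiv^m=\sum_i(-1)^{m-i}\binom{m}{i}\shift^i$ and use $\sum_{i=0}^m(-1)^{m-i}\binom{m}{i}i^r=0$ for $r<m$ and $=m!$ for $r=m$.
\end{itemize}
The direct computation in (iii) also gives the explicit leading coefficient $c_m^{(m)}=a_0\lambda(\lambda-1)\cdots(\lambda-m+1)$. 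This is exactly the discrete counterpart of $\frac{\dd^m}{\dd x^m}x^{\lambda}$ that the paper points to in the remark after the lemma.

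Your caution about iterating is unnecessary, though. Suppose $\ddiv^m g_n=\sum_{j=m}^k c_j^{(m)}n^{\lambda-j}+\bigO{n^{\lambda-k-1}}$ with $c_m^{(m)}\neq0$ and $m\le k-1$. This is an expansion of the same type, with $\lambda-m$ and $k-m\ge1$ in place of $\lambda$ and $k$, and the same error term. Your $m=1$ computation therefore applies directly and gives the leading coefficient $c_m^{(m)}(\lambda-m)\neq0$ for $\ddiv^{m+1}g$. The fact that the $\bigO{\cdot}$ error does not improve is harmless: the statement only asks for $\bigO{n^{\lambda-k-1}}$, and the loss of one explicit term per step is already built into the range $j\in\{m,\dots,k\}$. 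Both routes give the same coefficient; yours simply reaches it in one step.

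One small point: nonvanishing of $c_m^{(m)}$ only needs $\lambda\notin\{0,\dots,m-1\}$. The hypothesis $\lambda\notin\N_0$ is what makes this hold for every admissible $m$ at once, so it is sufficient but not exactly necessary.
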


\begin{lemma} \label{lem:MVT}
    Let $n\in\Z$, $m\in\N$ and $g \in C([n,n+m])\cap C^m((n,n+m))$. Then there exists $\xi \in (n,n+m)$ such that
    \[
        \ddiv^{m}g_{n}=g^{(m)}(\xi),
    \] 
    where we denoted $g_{n}:=g(n)$.
\end{lemma}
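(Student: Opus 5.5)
We prove the statement by induction on $m$, and first strengthen it so that the base point may be any real number. For $x\in\R$ and a function $g$ we write $(\ddiv g)(x):=g(x+1)-g(x)$. The strengthened claim reads: if $a\in\R$, $m\in\N$ and $g\in C([a,a+m])\cap C^m((a,a+m))$, then there exists $\xi\in(a,a+m)$ with
\[
(\ddiv^m g)(a)=g^{(m)}(\xi).
\]
Lemma~\ref{lem:MVT} is the special case $a=n\in\Z$. We need real base points because the inductive step below produces one.

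For the base case $m=1$, we have $(\ddiv g)(a)=g(a+1)-g(a)$. Since $g$ is continuous on $[a,a+1]$ and differentiable on $(a,a+1)$, the classical mean value theorem gives $\xi\in(a,a+1)$ with $g(a+1)-g(a)=g'(\xi)$.

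For the inductive step, let $m\ge2$ and define the auxiliary function
\[
\varphi(x):=(\ddiv^{m-1}g)(x)=\sum_{j=0}^{m-1}(-1)^{m-1-j}\binom{m-1}{j}g(x+j), \qquad x\in[a,a+1].
\]
The arguments $x+j$ run over $[a,a+m]$, so $\varphi$ is continuous on $[a,a+1]$. For $x\in(a,a+1)$ every $x+j$ lies in the open interval $(a,a+m)$, where $g$ is differentiable. Hence $\varphi$ is differentiable on $(a,a+1)$ with $\varphi'=\ddiv^{m-1}g'$.

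Since $\ddiv^m=\ddiv\,\ddiv^{m-1}$, we have $(\ddiv^m g)(a)=\varphi(a+1)-\varphi(a)$. The mean value theorem applied to $\varphi$ gives $\eta\in(a,a+1)$ with
\[
(\ddiv^m g)(a)=\varphi'(\eta)=(\ddiv^{m-1}g')(\eta).
\]

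We now apply the induction hypothesis to $g'$ with base point $\eta$ and order $m-1$. This requires $g'\in C([\eta,\eta+m-1])\cap C^{m-1}((\eta,\eta+m-1))$. Both conditions hold because $a<\eta$ and $\eta+m-1<a+m$, so $[\eta,\eta+m-1]\subset(a,a+m)$, and on $(a,a+m)$ the function $g'$ is $C^{m-1}$. The hypothesis yields $\xi\in(\eta,\eta+m-1)\subset(a,a+m)$ with
\[
(\ddiv^{m-1}g')(\eta)=(g')^{(m-1)}(\xi)=g^{(m)}(\xi),
\]
which completes the induction.

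The only delicate point is regularity at the endpoints: $g$ is assumed continuous, not differentiable, at $a$ and $a+m$. This is why we pick $\eta$ strictly inside $(a,a+1)$, so that the closed interval $[\eta,\eta+m-1]$ lies in the open interval where $g$ is smooth.

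An alternative proof avoids the real-base-point generalization. Let $P$ be the polynomial of degree at most $m$ interpolating $g$ at $n,\dots,n+m$. Since the divided difference over unit-spaced nodes equals $\ddiv^m g_n/m!$, we have $P^{(m)}\equiv\ddiv^m g_n$. The function $g-P$ has $m+1$ zeros in $[n,n+m]$, so applying Rolle's theorem $m$ times gives a zero $\xi\in(n,n+m)$ of $(g-P)^{(m)}$, that is, $g^{(m)}(\xi)=\ddiv^m g_n$.
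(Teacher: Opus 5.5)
Your proof is correct, and both of your arguments are complete. This paper does not prove Lemma~\ref{lem:MVT} itself: it imports it, together with Lemmas~\ref{lem:asymptotic_beh} and~\ref{lem:Leibniz}, from \cite[Lemmas 7--9]{sta-wac_26acc}, so there is no in-paper argument to compare against.

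In your induction, the key point is the choice of $\eta$ strictly inside $(a,a+1)$. This forces $[\eta,\eta+m-1]\subset(a,a+m)$, so the inductive hypothesis is only invoked where $g'$ is $C^{m-1}$, and $g$ itself needs only continuity at $a$ and $a+m$. The interpolation route is equally sound. The first round of Rolle's theorem uses the closed unit intervals $[n+i,n+i+1]$, where continuity at the nodes suffices, and every later round takes place between interior zeros.

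Your strengthening to a real base point $a$ is more than a technical convenience. The paper itself applies Lemma~\ref{lem:MVT} at a non-integer base point $x>0$ in the proof of Lemma~\ref{lem:tilde_g_CM}, which the statement with $n\in\Z$ does not literally cover. Your version, or the interpolation argument with nodes $a,\dots,a+m$, justifies that use.

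Likewise, the endpoint-only continuity you are careful about is exactly what is needed in Proposition~\ref{prop:old_g:assump} at $n=\ell$. There the base point is $0$, and $\tilde{\g}^{(\ell,p)}$ is continuous but not differentiable at $0$.
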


\begin{lemma} \label{lem:Leibniz}
    For all $m\in\N_{0}$ and sequences $f,g\in C(\Z)$, we have
    \[
        \ddiv^{m}(fg)=\sum_{j=0}^{m}\binom{m}{j}\left(\ddiv^{j}\midd^{m-j}f\right)\left(\ddiv^{m-j}\midd^{j}g\right),
    \]
    where the multiplication of sequences is to be understood pointwise.
\end{lemma}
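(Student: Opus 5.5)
We argue by induction on $m$, using only that $\ddiv$ and $\midd$ are both polynomials in the shift $\shift$ (namely $\ddiv=\shift-\id$ and $\midd=\tfrac12(\shift+\id)$), so they commute on $C(\Z)$. The case $m=0$ is trivial, since both sides equal $fg$.

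\emph{Step 1 (case $m=1$).} This is a direct check:
\[
(\midd f)_n(\ddiv g)_n+(\ddiv f)_n(\midd g)_n
=\tfrac12\bigl[(f_{n+1}+f_n)(g_{n+1}-g_n)+(f_{n+1}-f_n)(g_{n+1}+g_n)\bigr]
=f_{n+1}g_{n+1}-f_ng_n ,
\]
which is $\ddiv(fg)_n$. So $\ddiv(fg)=(\midd f)(\ddiv g)+(\ddiv f)(\midd g)$ for arbitrary $f,g\in C(\Z)$.

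\emph{Step 2 (induction step).} Assume the formula holds for $m$. Apply $\ddiv$ to each summand $F_jG_j$, where $F_j:=\ddiv^{j}\midd^{m-j}f$ and $G_j:=\ddiv^{m-j}\midd^{j}g$. By Step 1 applied to the pair $F_j,G_j$,
\[
\ddiv(F_jG_j)=\bigl(\ddiv^{j}\midd^{m+1-j}f\bigr)\bigl(\ddiv^{m+1-j}\midd^{j}g\bigr)+\bigl(\ddiv^{j+1}\midd^{m-j}f\bigr)\bigl(\ddiv^{m-j}\midd^{j+1}g\bigr).
\]
Here we used the commutation $\midd\ddiv^{j}=\ddiv^{j}\midd$ to move the extra $\midd$ into place. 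Now multiply by $\binom{m}{j}$ and sum over $j$. Shift the index $j\mapsto j-1$ in the second family of terms. The coefficient of $\bigl(\ddiv^{j}\midd^{m+1-j}f\bigr)\bigl(\ddiv^{m+1-j}\midd^{j}g\bigr)$ then becomes $\binom{m}{j}+\binom{m}{j-1}=\binom{m+1}{j}$ by Pascal's rule, with the conventions $\binom{m}{-1}=\binom{m}{m+1}=0$. This is exactly the formula for $m+1$.

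\emph{Alternative compact route.} One can work with operators acting on pairs $(f,g)$, letting $\ddiv$ act on the product via $\shift\otimes\shift-\id\otimes\id$. Step 1 says this operator equals $A+B$ with $A=\midd\otimes\ddiv$ and $B=\ddiv\otimes\midd$. Since $A$ and $B$ commute, the binomial theorem for $(A+B)^m$ gives the lemma at once.

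There is no real obstacle here. The only points needing care are the commutation of $\ddiv$ with $\midd$ and the bookkeeping of the boundary binomial coefficients in the reindexing.
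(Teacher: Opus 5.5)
This lemma has no proof in the paper: it is quoted, together with Lemmas~\ref{lem:asymptotic_beh} and~\ref{lem:MVT}, from \cite[Lemmas~7--9]{sta-wac_26acc}. So there is no in-paper argument to compare against, and your proposal supplies a complete, correct, self-contained proof.

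Your induction holds up at every step:
\begin{enumerate}[(a)]
\item The one-step identity $\ddiv(fg)=(\midd f)(\ddiv g)+(\ddiv f)(\midd g)$ checks out by direct expansion.
\item Since $\ddiv=\shift-\id$ and $\midd=\tfrac12(\shift+\id)$ commute, applying this identity to $F_j=\ddiv^{j}\midd^{m-j}f$ and $G_j=\ddiv^{m-j}\midd^{j}g$ produces exactly the two terms you wrote.
\item The second of these terms is the first one with $j$ replaced by $j+1$, so Pascal's rule gives the coefficient $\binom{m+1}{j}$.
\item Everything is pointwise on $C(\Z)$, where $\shift$ is a bijection, so there are no boundary or summability issues.
\end{enumerate}

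Your alternative route is also sound once made precise. Put $F(n,k)=f_ng_k$ on $\Z^2$ and let $\Pi F_n=F(n,n)$. Then $\Pi(\shift\otimes\shift)=\shift\,\Pi$, and hence $\ddiv^m(fg)=\Pi(\shift\otimes\shift-\id\otimes\id)^m F$. The identity $\shift\otimes\shift-\id\otimes\id=\midd\otimes\ddiv+\ddiv\otimes\midd$ is your Step~1 at the operator level, and its two summands commute. The binomial theorem then gives the lemma directly.

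The induction is the more elementary of the two routes. The operator version removes all index bookkeeping and shows that the binomial coefficients come from nothing more than the binomial theorem for two commuting operators.
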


The claim of Lemma \ref{lem:asymptotic_beh}(iii) asserts that the discrete derivative $\ddiv\g$, for $\g \in C(\Z)$ of the form \eqref{eq:lem:asymtotic_beh}, can be treated in a manner analogous to the continuous derivative $\g'$, once $\g$ is naturally extended to a function on $(0,\infty)$, as far as the asymptotic behavior is concerned. Lemma~\ref{lem:MVT} generalizes the classical \textit{Mean Value Theorem} and further elucidates the relationship between discrete and continuous derivatives. Lemma~\ref{lem:Leibniz} is the \textit{Leibniz formula} for the discrete divergence.

\begin{proof}[a) Proof of criticality of $\rho(\g)$:]
Suppose that $\tilde{\rho}$ is a discrete $p$-Hardy--Rellich--Birman weight satisfying $\tilde{\rho}_n\ge\rho_n(\g)$ for all $n\ge\ell$. Employing inequality \eqref{eq:thm:p-Hardy_l} for the weight $\rho(\g)$ and inequality \eqref{eq:p-hrb_ineq_gener} for the weight $\tilde{\rho}$, we find that
\begin{equation}
    \label{eq:criticality:1}
    0 
      \leq \sum_{n=\ell}^{\infty} \left(\tilde{\rho}_{n}-\rho_{n}(\g)\right)|u_{n}|^{p}
      \lesssim \sum_{k=1}^{\ell}R_{k}^{(\ell)}(p,\g,u)
\end{equation}
for all sequences $u\in\H_0^\ell$.

A quick observation of the remainders determined by formula \eqref{eq:thm:p-Hardy_l_rem} reveals that they are all simultaneously annihilated if we set $u=\g$, i.e. $R_{k}^{(\ell)}(p,\g,\g)\equiv0$ for all $k\in\{1,\dots,\ell\}$ and $p>1$. However, we cannot directly substitute $u=\g$ to \eqref{eq:criticality:1} since $\g\notin\H_0^\ell$. To this end, we introduce the following regularization of $\g$. First, let us define a smooth cut-off function $\xi^N$, for any $N\ge2$, as
\begin{equation} \label{eq:criticality:reg}
 \xi^{N}(x):=
    \begin{cases}
         1 & \text{ if } x\in(0,N],\\
         \eta\left(\frac{2\log N-\log x}{\log N}\right) & \text{ if } x\in(N, N^2],\\
         0 & \text{ if }  x\in(N^{2},\infty),\\
     \end{cases}
\end{equation}
where $\eta$ is a smooth function, such that $\eta\equiv0$ on $(-\infty,\varepsilon)$ and $\eta\equiv1$ on $(1-\varepsilon,\infty)$ for an arbitrary fixed $\varepsilon\in(0,1/2)$. Next, we define $u^N:=\xi^N\g$ for all $N\ge2$, where $\xi_n^N:=\xi^N(n)$ for all $n\ge1$. Then $u^{N}\in\H_{0}^{\ell}$ and $u^N\to\g$ pointwise as $N\to\infty$ since $\xi^N\to1$ pointwise as $N\to\infty$.

We will show that, with the chosen regularization, we have the estimates
\begin{equation} \label{eq:criticality:estimate}
    R_{k}^{(\ell)}(p;\g,u^{N}) \lesssim 
    \begin{cases}
    1/\log^{p-1}\! N &\text{ if }p\in(1,2],\\
    1/\log N &\text{ if }p\in(2,\infty),
    \end{cases}
\end{equation}
for all $k\in\{1,\dots,\ell\}$. Here $\lesssim$ means inequality $\le$ up to a multiplicative constant which may depend on $p$ and $\ell$, but is $N$-independent. Estimates from~\eqref{eq:criticality:estimate} together with Fatou's lemma and \eqref{eq:criticality:1} imply
\[
    \sum_{n=\ell}^{\infty}  \left(\tilde{\rho}_{n}-\rho_{n}(\g)\right) \g_n^p \leq 
    \liminf_{N\to\infty} \sum_{n=\ell}^{\infty}  \left(\tilde{\rho}_{n}-\rho_{n}(\g)\right) |u_n^N|^p = 0.
\]
Bearing in mind that all the terms in the sum on the left are non-negative and $\g_{n}>0$ for all $n\ge \ell$ by \eqref{assump:1}, we conclude that $\tilde{\rho}_{n}=\rho_{n}(\g)$ for all $n\ge\ell$, which proves the criticality of $\rho(\g)$.

In the rest of the proof of part~a), we verify estimates~\eqref{eq:criticality:estimate}. Replacing the gradients by divergences in \eqref{eq:thm:p-Hardy_l_rem}, we find that
\begin{equation}
    R_{k+1}^{(\ell)}(p;\g,u^N)
      = \sum_{n=\ell-k}^{\infty} \frac{-\lap_p^{(\ell-k-1)}\ddiv^{k+1}\g_{n}}{(\ddiv^{k+1}\g_{n})^{p-1}} \,\mathcal{R}(p;\ddiv^{k}\g,\ddiv^{k} u^N)_n
\label{eq:R_k+1_aux_inproof}
\end{equation}
for all $k\in\{0,\dots,\ell-1\}$. We proceed to analyze the asymptotic behavior of the summands as $n\to\infty$, with $N\ge 2$ fixed.

We begin by deducing an upper bound of the first factor. Specifically, we show that
\begin{equation}
    \label{eq:pr:criticality:1}
    \frac{-\lap_p^{(\ell-k-1)}\ddiv^{k+1}\g_{n}}{(\ddiv^{k+1}\g_{n})^{p-1}} \lesssim n^{p(k-\ell+1)}
\end{equation}
for any $n\ge\ell-k$ and $k\in\{0,\dots,\ell-1\}$. By assumption~\eqref{assump:4}, Lemma~\ref{lem:asymptotic_beh}(iii), and the generalized binomial formula, we get the expansion 
\begin{equation} \label{eq:div^p-1:asymp}
    (\ddiv^m\g_n)^{p-1}=n^{(p-1)(\ell-m-1/p)}\left[\,\sum_{j=0}^{2\ell-m}\frac{\gamma_j}{n^{j}} + \bigO{\frac{1}{n^{2\ell-m+1}}}\right], \quad\text{ as } n\to\infty,
\end{equation}
for any $m\in\{0,\dots,2\ell\}$, where $\gamma_j$ are $n$-independent constants and $\gamma_0\neq0$. Next, using the shift operator \eqref{eq:def:shift_mid} and definition \eqref{eq:def_p-lap} of the $p$-Laplacian, we may rewrite the numerator as
\[
    -\lap_p^{(\ell-k-1)}\ddiv^{k+1}\g_n = (-1)^{\ell+k+1} \,\shift^{k-\ell+1}\,\ddiv^{\ell-k-1}\,(\ddiv^\ell\g_n)^{p-1}.
\]
Hence, an application of claims~(i) and~(iii) of Lemma~\ref{lem:asymptotic_beh} to the sequence $(\ddiv^\ell\g)^{p-1}$ with asymptotic expansion~\eqref{eq:div^p-1:asymp} yields
\[
    -\lap_p^{(\ell-k-1)}\ddiv^{k+1}\g_n \lesssim n^{k-\ell+1/p}.
\]
This inequality and~\eqref{eq:div^p-1:asymp} with $m=k+1$ imply~\eqref{eq:pr:criticality:1}.

Further, we examine the asymptotic behavior of the second factor. Namely, we prove that
\begin{equation} \label{eq:pr:criticality:2}
     \mathcal{R}(p;\ddiv^k\g,\ddiv^k u^N)_n \lesssim \frac{n^{p(\ell-k-1)-1}}{\log^{\min(p,2)}N}
\end{equation}
for all $p>1$, where again the unspecified constant is independent of $n$ and $N$. First, we establish the inequality
\begin{equation} \label{eq:pr:criticality:sw_estimate}
    \left|\sqrt{\frac{\ddiv^{k} \g_{n}}{\ddiv^k \g_{n+1}}}\ddiv^k u_{n+1}^N - \sqrt{\frac{\ddiv^k \g_{n+1}}{\ddiv^k \g_{n}}} \ddiv^k u_n^N\right| \lesssim\frac{n^{\ell-k-1-1/p}}{\log{N}}
\end{equation}
for all $N\ge2$, $n\ge\ell$, and $k\in\{0,\dots,\ell-1\}$. In fact, this result implicitly appeared already in~\cite[Sec.~2.4]{sta-wac_26acc} but for the sake of completeness, we repeat the derivation. By assumption~\eqref{assump:4} and claims~(i) and~(iii) of Lemma~\ref{lem:asymptotic_beh}, we find that
\[
    \sqrt{\frac{\ddiv^{k}\g_{n}}{\ddiv^{k}\g_{n+1}}}\lesssim1,
\]
and therefore the left-hand side of~\eqref{eq:pr:criticality:sw_estimate} can be estimated from above by 
\[
    \left|\ddiv^{k}u_{n+1}^N - \frac{\ddiv^{k}\g_{n+1}}{\ddiv^{k}\g_{n}}\ddiv^{k}u_{n}^N\right|.
\]
Recalling that $u^{N}=\xi^{N}\g$ and using the Leibniz formula from Lemma~\ref{lem:Leibniz} the last expression can be further estimated by
\begin{align*}
      \sum_{j=0}^{k}\binom{k}{j}\ddiv^{j}\midd^{k-j}\g_{n+1} \left|\ddiv^{k-j}\midd^{j}\xi^{N}_{n+1} - \frac{\ddiv^{k}\g_{n+1}}{\ddiv^{k}\g_{n}} \frac{\ddiv^{j}\midd^{k-j}\g_{n}}{\ddiv^{j}\midd^{k-j}\g_{n+1}}\,\ddiv^{k-j}\midd^{j}\xi^{N}_{n}\right|.
\end{align*}
Yet another application of~\eqref{assump:4} and formulas from Lemma~\ref{lem:asymptotic_beh} gives
\[
    \ddiv^{j}\midd^{k-j}\g_{n+1}\lesssim n^{\ell-j-1/p} 
    \quad\text{ and }\quad \frac{\ddiv^{k}\g_{n+1}}{\ddiv^{k}\g_{n}} \frac{\ddiv^{j}\midd^{k-j}\g_{n}}{\ddiv^{j}\midd^{k-j}\g_{n+1}} = 1 + p_n^{(k,j)},
\]
where $p_{n}^{(k,j)}\lesssim 1/n$, with $p^{(k,k)}\equiv0$. Altogether, we deduce the upper bound for the left-hand side of~\eqref{eq:pr:criticality:sw_estimate} in the form
\begin{equation} \label{eq:criticality:2}
    \sum_{j=0}^{k} \binom{k}{j} n^{\ell-j-1/p} \biggl(\left|\ddiv^{k-j+1}\midd^{j}\xi^{N}_{n+1}\right| + \left|p_{n}^{(k,j)}\,\ddiv^{k-j}\midd^{j}\xi^{N}_{n}\right|\biggr)
\end{equation}
for all $N\ge2$, $n\ge\ell$, and $k\in\{0,\dots,\ell-1\}$.

In order to arrive at the desired estimate~\eqref{eq:pr:criticality:sw_estimate}, we have to inspect asymptotic behavior of all the $\ddiv$ terms from~\eqref{eq:criticality:2}. To this end, observe that for all $x\in[N,N^2]$, we have
\[
    (\xi^N)'(x)=-\eta'\left(\frac{2\log{N}-\log{x}}{\log{N}}\right) \frac{1}{x\log{N}}.
\]
Thus, $|(\xi^N)'(x)|\lesssim 1/(x\log{N})$, as $\eta'$ is majorized by $\max_{x\in[0,1]}|\eta'(x)|$. It is straightforward to generalize this bound to higher-order derivatives; we find
\[
    \bigl|(\xi^{N})^{(m)}(x)\bigr|\lesssim\frac{1}{x^{m}\log N}
\]
for all $m\in\N$ and $x>0$. Consequently, by Lemma~\ref{lem:MVT}, we obtain 
\[
    \bigl|\ddiv^m\xi^N_{n}\bigr|\lesssim\frac{1}{n^m\log{N}},
\]
for any $m,n\in\N$ and $N\ge2$. Since the right-hand side is a decreasing function of $n$, we also have 
\begin{equation} \label{eq:criticality:xi_bound}
    \bigl|\ddiv^m\midd^j\xi_n^N\bigr|\lesssim\frac{1}{n^m\log{N}}
\end{equation}
for any $j\in\N_0$. From these estimates, we infer further upper bound for~\eqref{eq:criticality:2} in the form
\[
    \sum_{j=0}^{k-1} \binom{k}{j} n^{\ell-j-1/p}\left(\frac{1}{n^{k-j+1}\log{N}} + \frac{1}{n}\frac{1}{n^{k-j}\log{N}}\right)+n^{\ell-k-1/p} \frac{1}{n\log{N}} \lesssim \frac{n^{\ell-k-1-1/p}}{\log{N}},
\]
arriving at \eqref{eq:pr:criticality:sw_estimate}.

If $p\in(1,2]$, inequality~\eqref{eq:pr:criticality:2} follows immediately from~\eqref{eq:pr:criticality:sw_estimate} and definition~\eqref{eq:thm:p-Hardy_1_rem} of the corresponding remainders. If $p\in(2,\infty)$, we must also estimate the second factor in the defining formula~\eqref{eq:thm:p-Hardy_1_rem}. In particular, for any $k\in\{0,\dots,\ell-1\}$, we show that
\begin{equation} \label{eq:pr:criticality:3}
    \bigl|\ddiv^{k+1}u_{n}^N\bigr|+\frac{\bigl|\ddiv^k u_{n}^N\bigr|}{\ddiv^k\g_{n}}\ddiv^{k+1}\g_{n} \lesssim n^{\ell-k-1-1/p}
    \quad\text{ for all } n\ge\ell-k.
\end{equation}
By Lemma~\ref{lem:Leibniz}, 
\[
    \ddiv^k u_n^N = \ddiv^k(\xi^N\g)_n=\sum_{j=0}^{k}\binom{k}{j} (\ddiv^{j}\midd^{k-j}\g_{n}) (\ddiv^{k-j}\midd^{j}\xi^{N}_{n})
\]
for any $k\geq0$. Further, using claims~(ii) and~(iii) of Lemma~\ref{lem:asymptotic_beh} and inequality~\eqref{eq:criticality:xi_bound}, we infer that
\[
    \ddiv^{j}\midd^{k-j}\g_{n} \lesssim n^{\ell-j-1/p} \quad\text{ and }\quad \ddiv^{k-j}\midd^{j}\xi^{N}_{n} \lesssim n^{j-k}
\]
for all $N\ge2$, $n\ge\ell-k$, and $j\in\{0,\dots,k\}$. Note that the second estimate for $j=k$ follows from the obvious inequality $\xi^N\le1$. Altogether, we get
\[
    \ddiv^k u_n^N \lesssim n^{\ell-k-1/p}.
\]
Furthermore, using Lemma~\ref{lem:asymptotic_beh}(iii) once again, we find that
\[
    \frac{\ddiv^{k+1}\g_{n}}{\ddiv^k\g_n}\lesssim \frac{1}{n}.
\]
Combining the above inequalities yields the estimate~\eqref{eq:pr:criticality:3}. Recalling definition~\eqref{eq:thm:p-Hardy_1_rem}, inequalities~\eqref{eq:pr:criticality:sw_estimate} and~\eqref{eq:pr:criticality:3} amount to the desired inequality~\eqref{eq:pr:criticality:2} for $p>2$.

Finally, the inequalities~\eqref{eq:pr:criticality:1} and~\eqref{eq:pr:criticality:2} applied in \eqref{eq:R_k+1_aux_inproof}, together with the observation that $\mathcal{R}(p;\ddiv^k\g,\ddiv^k u^N)_n=0$ for all $n<N-k$ and $n>N^{2}$,
yield
\begin{align*}
    R_{k+1}^{(\ell)}(p;\g,u^N)
      &=\sum_{n=N-k}^{N^2}\!\frac{-\lap_p^{(\ell-1-k)}\ddiv^{k+1}\g_{n}}{(\ddiv^{k+1}\g_{n})^{p-1}}\,\mathcal{R}(p;\ddiv^k\g;\ddiv^k u^N)_n \lesssim \frac{1}{\log^{\min(p,2)}{N}} \sum_{n=N-k}^{N^2} \frac{1}{n}\\
      &\lesssim \frac{1}{\log^{\min(p,2)}{N}}\int_N^{N^2}\frac{\dd n}{n} = \frac{1}{\log^{\min(p,2)-1}{N}}
\end{align*}
for all $N\geq\ell$ and $k\in\{0,\dots,\ell-1\}$. The last estimate coincides with~\eqref{eq:criticality:estimate}, and the proof of criticality is complete.
\end{proof}

\begin{proof}[b) Proof of non-attainability of $\rho(\g)$:]
Under the additional condition of strict positivity in assumption~\eqref{assumpp:3}, we first extend the lower bound in~\eqref{eq:thm:p-Hardy_l} from $\H_0^\ell$ to sequences in $H^\ell$ for which equality is attained in~\eqref{eq:p-hrb_ineq_gener} and both sides are finite. For this extension, we need the following auxiliary result.

\begin{lemma}
    \label{lem:density_of_grad}
    The range of the operator $\grad^\ell|_{\H_0^\ell}$ is dense in $\H^{\ell,p}$.
\end{lemma}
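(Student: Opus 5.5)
Since $1<p<\infty$, density of a subspace of $\ell^p$ can be tested by duality: by Hahn--Banach, it suffices to show that every functional in the dual of $\H^{\ell,p}$ that annihilates $\grad^\ell\H_0^\ell$ is zero. We identify $\H^{\ell,p}$ with $\ell^p$ of sequences indexed by $n\ge\ell$. Its dual is then the $\ell^q$-space of sequences $v=\{v_n\}_{n\ge\ell}$, with the pairing $\sum_{n\ge\ell}\overline{v_n}w_n$.

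First we check that $\grad^\ell$ maps $\H_0^\ell$ into $\H^{\ell,p}$. For $u\in\H_0^\ell$, the sequence $\grad^\ell u_n$ is a finite combination of $u_n,\dots,u_{n-\ell}$. It therefore vanishes for $n<\ell$ and also for all sufficiently large $n$, so $\grad^\ell u\in\H_0^\ell\subset\H^{\ell,p}$.

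Now suppose $v\in\ell^q$, indexed by $n\ge\ell$ and extended by zero for $n<\ell$, satisfies $\sum_{n\ge\ell}\overline{v_n}\,\grad^\ell u_n=0$ for all $u\in\H_0^\ell$. Summation by parts is legitimate because $u$ is finitely supported. Using $\grad^*=-\ddiv$ on $C_0(\Z)$, the condition becomes $\sum_{n\ge\ell}\overline{(\ddiv^\ell v)_n}\,u_n=0$ for all $u\in\H_0^\ell$. Taking $u$ to be the unit sequences $e_m$, $m\ge\ell$, gives $(\ddiv^\ell v)_m=0$ for all $m\ge\ell$.

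The conclusion then follows from the structure of solutions of $\ddiv^\ell v=0$. On the range $n\ge\ell$, this equation forces $v_n$ to agree with a polynomial in $n$ of degree at most $\ell-1$. A nonzero polynomial does not tend to zero, whereas $v\in\ell^q$ requires $v_n\to0$. Hence the polynomial is identically zero, so $v_n=0$ for all $n\ge\ell$, and the annihilator is trivial. This proves that $\grad^\ell\H_0^\ell$ is dense in $\H^{\ell,p}$.

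The main point to handle carefully is the boundary bookkeeping in the summation by parts. Because $v$ is extended by zero below $\ell$, the term $(\ddiv^\ell v)_m$ for $m\ge\ell$ involves only the values $v_m,\dots,v_{m+\ell}$, all with indices $\ge\ell$. Since $\ddiv$ only looks forward, no boundary terms appear. The polynomial argument therefore applies exactly on $n\ge\ell$.
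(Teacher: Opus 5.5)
Your proof is correct and follows essentially the same route as the paper. Both use Hahn--Banach duality, transfer $\grad^\ell$ to $(-1)^\ell\ddiv^\ell$ acting on an annihilating $v\in\H^{\ell,q}$, and test with unit sequences to obtain $\ddiv^\ell v_n=0$ for all $n\ge\ell$; the conclusion then follows because the only solutions are polynomials of degree at most $\ell-1$, and these lie in $\ell^q$ only if they vanish identically.
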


\begin{proof}[Proof of Lemma~\ref{lem:density_of_grad}]
    Clearly, $\grad^\ell(\H_0^\ell)\subset\H_0^\ell\subset\H^{\ell,p}$ by definition 
    \eqref{eq:def_grad_div_Z} of the discrete gradient. We prove that if $v\in\H^{\ell,q}$ satisfies
    \begin{equation}
        \label{eq:lem:density:inn_prod}
        \bigl\langle{v},{\grad^\ell u}\bigr\rangle=\sum_{n=\ell}^{\infty}\overline{v_n}\,\grad^\ell u_n =0
        \quad\text{ for all }u\in\H_0^\ell,
    \end{equation}
    then $v\equiv0$. The density of $\grad^\ell(\H_0^\ell)$ in $\H^{\ell,p}$ then follows from the Hahn--Banach Theorem. 
    
    Since the algebraic adjoint to $\grad$ is $-\ddiv$, we may rewrite the equality from~\eqref{eq:lem:density:inn_prod} as
    \[
    \bigl\langle (-1)^{\ell}\ddiv^{\ell}v,u\bigr\rangle=0.
    \]
    Taking $u=\delta_n$ for all $n\geq\ell$, where $\delta_n$ is the $n$-th vector of the standard basis of $\H_0^\ell$, in the last equation, we observe that $v$ solves the linear difference equation of order $\ell$ with constant coefficients:
    \[
        (-1)^\ell\ddiv^\ell v_n=\sum_{j=0}^\ell \binom{\ell}{j}(-1)^jv_{n+j}=0
        \quad\text{ for all } n\ge\ell.
    \]
    Since the fundamental system of this linear difference equation is given by the sequences $1, n, \dots, n^{\ell-1}$, we deduce that
    \[
        v_n = \sum_{j=0}^{\ell-1} c_j n^j
        \quad\text{ for all } n \ge \ell
    \]
    and some $c_j \in \C$. Taking also into account that $v \in \ell^q(\Z)$, we conclude that $c_j = 0$ for all ${j \in \{0,\dots,\ell-1\}}$, i.e. $v \equiv 0$. The proof of Lemma~\ref{lem:density_of_grad} is complete.
\end{proof}

Now suppose that $u\in H^\ell$ fulfills inequality~\eqref{eq:p-hrb_ineq_gener} as equality whose (both) sides are finite, i.e.
\begin{equation}
    \label{eq:non-att_equality}
    \sum_{n=\ell}^{\infty} \bigl|\grad^\ell u_n\bigr|^p=\sum_{n=\ell}^{\infty} \rho_n(\g)|u_n|^p < \infty.
\end{equation}
Consequently, $\grad^\ell u\in\H^{\ell,p}$ and therefore, by Lemma~\ref{lem:density_of_grad}, there exists a sequence of $u^N\in\H_0^\ell$ such that $\grad^\ell u^N\to\grad^\ell u$ as $N\to\infty$ in the norm of $\ell^p(\Z)$. In particular, the sequence is Cauchy in $\ell^p(\Z)$.

Due to Theorem~\ref{thm:p-Hardy_weight}, the inequality~\eqref{eq:p-hrb_ineq_gener} holds for $u=u^{N}-u^{M}\in\H_0^\ell$ and the non-negative weight $\rho(\g)$, which means that
\[
    \sum_{n=\ell}^{\infty}\rho(\g)\left|u_n^N-u_n^M\right|^{p}\leq\sum_{n=\ell}^{\infty}\left|\grad^{\ell}u_n^N-\grad^{\ell}u_n^M\right|^{p} \quad\text{ for all } M,N\in\N
\]
Consequently, the sequence $\{\sqrt[\uproot{2}\leftroot{-2}p]{\rho(\g)}\,u^N\}_{N=1}^\infty$ is Cauchy, too, and so convergent in $\ell^p(\Z)$. Denote by $v$ the $\ell^p$--limit of $\sqrt[\uproot{2}\leftroot{-2}p]{\rho(\g)}\,u^N$ as $N\to\infty$. It is immediate that $v\in\H^{\ell,p}$ and that, for every $n\in\Z$, we have $\sqrt[\uproot{2}\leftroot{-2}p]{\rho_n(\g)}\,u_n^N \to v_n$ as $N\to\infty$. Since $\rho_n(\g)>0$ for all $n\geq \ell$ by the strict inequalities in assumption~\eqref{assumpp:3}, we may put $\widetilde{u}_n:=v_n/\sqrt[\uproot{2}\leftroot{-2}p]{\rho_n(\g)}$ for $n\geq\ell$ and $\widetilde{u}_{n}:=0$ for $n<\ell$. Then $\widetilde{u}\in H^\ell$ and $u_n^N \to \widetilde{u}_n$ as $N\to\infty$ for all $n\in\Z$.

Moreover, the limits of both $\ell^p$-convergent sequences must coincide with their pointwise limits, i.e.
\[
    \grad^\ell u^N \to \grad^\ell \widetilde{u} 
    \quad\text{ and }\quad
    \sqrt[\uproot{2}\leftroot{-2}p]{\rho(\g)}\,u^N\to\sqrt[\uproot{2}\leftroot{-2}p]{\rho(\g)}\,\widetilde{u}
\]
in $\ell^p(\Z)$ as $N\to\infty$. In particular, we have $\grad^\ell u=\grad^\ell\widetilde{u}$, which implies $u=\widetilde{u}$ since the linear operator $\grad^\ell$ is injective on $H^\ell$. Indeed, if $w$ solves the difference equation
\[
    \grad^\ell w_n=0
    \quad\text{ for all }n\ge\ell,
\]
with the boundary condition $w_n=0$ for all $n<\ell$, corresponding to the assumption $w\in H^\ell$, we recursively obtain $w_n=0$ for all $n\in\Z$.

The lower estimate from~\eqref{eq:thm:p-Hardy_l} with $u=u^{N}$ yields
\[
    0 
      \le \sum_{k=1}^{\ell}R_{k}^{(\ell)}(q;\g,u^N)
      \lesssim \sum_{n=\ell}^{\infty} \bigl|\grad^\ell u_n^N\bigr|^p - \sum_{n=\ell}^{\infty}\rho_n(\g)\bigl|u_n^N\bigr|^p
\]
for all $N\in\N$. Sending $N\to\infty$ and using the assumption~\eqref{eq:non-att_equality}, we find that all the non-negative remainders on the left-hand side must vanish as $N\to\infty$. 
In particular, for $k=\ell$, it follows that
\[
R_{\ell}^{(\ell)}(q;\g,u^N)\to0 \quad\text{ as } N\to\infty.
\]
Recalling definitions~\eqref{eq:thm:p-Hardy_l_rem} and \eqref{eq:thm:p-Hardy_1_rem}, an application of Fatou's Lemma yields
\[
    \mathcal{R}(q;\grad^{\ell-1}\g,\grad^{\ell-1}u)_n =0
\]
for all $n\ge\ell$. This is the same equation as in~\eqref{eq:recur_weak-non-att_inproof} and, by using exactly the same argument as in the proof of Theorem~\ref{thm:p-Hardy_weight}, we infer that $u_n=c\,\g_n$ for a constant $c\in\C$ and all $n\ge \ell$.

Next, we inspect the asymptotic behavior of $\rho_{n}(\g)=-\lap_p^{(\ell)}\g_n/\g_n^{p-1}$ for $n\to\infty$. Recalling definition~\eqref{eq:def_p-lap} of the $p$-Laplacian, the numerator can be equivalently written as
\[
    -\lap_p^{(\ell)}\g_n = (-1)^\ell \shift^{-\ell}\ddiv^\ell (\ddiv^\ell\g_n)^{p-1},
\]
where $\shift$ is the shift operator~\eqref{eq:def:shift_mid}. By claims~(i) and~(iii) of Lemma~\ref{lem:asymptotic_beh}, together with expansion~\eqref{eq:div^p-1:asymp} with $m=\ell$, we deduce that
\begin{equation}
    \label{eq:pr:optimal_asympt_rho}
    \rho_n(\g) \gtrsim \frac{n^{1/p-\ell-1}}{n^{(p-1)(\ell-1/p)}} = \frac{1}{n^{\ell p}}.
\end{equation}

Taking also assumption~\eqref{assump:4} into account, we find 
\[
    \sum_{n=\ell}^{\infty} \rho_n(\g) \g_n^p \gtrsim \sum_{n=\ell}^{\infty} \frac{1}{n} =\infty.
\]
Recalling that $u_n=c\,\g_n$ for all $n\ge \ell$, the last equation together with assumption~\eqref{eq:non-att_equality} imply that $c=0$, i.e. $u\equiv0$. The proof of non-attainability is complete.
\end{proof}

\begin{proof}[c) Proof of optimality near infinity of $\rho(\g)$:]
Fix an arbitrary $m\ge\ell$. We prove that $\rho(\g)$ enjoys property \eqref{eq:opt_near_inf}
by constructing a sequence $\{u^N\}_{N\ge m}\subset\H_0^m$ such that
\begin{equation} \label{eq:pr:opt_near_inf}
    \lim_{N\to\infty} \frac{\sum_{k=1}^{\ell} R_k^{(\ell)}(p;\g,u^N)}{\sum_{n=\ell}^{\infty}\rho_n(\g)\big|u_n^N\big|^p} = 0.
\end{equation}
which follows from the upper bound of~\eqref{eq:thm:p-Hardy_l}.
Similarly as in the proof of criticality, the idea is to find a suitable regularization of the parameter sequence $\g$ so that it becomes an element of the space $\H_0^m$. Namely, we set $u^N:=\xi^N\g$, where $\xi$ is a smooth bump function defined by
\[
    \xi^{N}(x):=
    \begin{cases}
        0 & \text{ if }  x\in(0,N],\\
        \eta\left(\frac{\log x-\log N}{\log N}\right) & \text{ if }  x\in(N,N^{2}],\\
        1 & \text{ if }  x\in(N^{2},2N^{2}],\\
        \eta\left(\frac{\log(2N^{3})-\log x}{\log N}\right) & \text{ if }  x\in(2N^{2},2N^{3}],\\
        0 & \text{ if } x\in(2N^{3},\infty),
    \end{cases}
\]
where the function $\eta$ exhibits the same properties as in~\eqref{eq:criticality:reg}. Then  $u^N\in\H_0^m$ for all $N\ge m$.

Analogously as in~\eqref{eq:criticality:xi_bound}, we may show that
\[
    \left|\midd^j\xi_n^N\right|\lesssim1 \quad\text{ and }\quad \left|\ddiv^k\midd^j\xi_n^N\right|\lesssim\frac{1}{n^k\log{N}}
\]
for all $k,n\in\N$, $j\in\N_0$, and $N\ge m$, where $\midd$ is the middle operator \eqref{eq:def:mid}. Therefore, the same arguments as in the proof of criticality apply, and we obtain the estimates~\eqref{eq:criticality:estimate} even with $u^N$ as defined above. Consequently, it follows the upper bound
\[
    R_{k+1}^{(\ell)}(p;\g,u^N)
      \lesssim \frac{1}{\log^{\min(p,2)}{N}} \left(\sum_{n=N-k}^{N^2} \frac{1}{n} +\sum_{n=2N^2-k}^{2N^3} \frac{1}{n}\right) \lesssim \frac{1}{\log^{\min(p,2)-1}{N}}
\]
for all $k\in\{0,\dots,\ell-1\}$ and $N$ sufficiently large, where the unspecified constant does not depend on $N$, but may depend on $p$ and $\ell$. Hence the numerator in~\eqref{eq:pr:opt_near_inf} tends to zero as $N\to\infty$.

In order to finish the proof, we show that the denominator in~\eqref{eq:pr:opt_near_inf} is bounded from below by a positive constant. By employing estimate \eqref{eq:pr:optimal_asympt_rho} and assumption \eqref{assump:4}, we find
\[
    \sum_{n=\ell}^{\infty}\rho_n(\g)\big|u^N_n\big|^p\geq\sum_{n=N^2}^{2N^2}\rho_n(\g)\g_n^p\gtrsim\sum_{n=N^2}^{2N^2}\frac{n^{\ell p-1}}{n^{\ell p}}\ge\int_{N^2}^{2N^2} \frac{\dd n}{n} = \log{2},
\]
as claimed. The proof of optimality near infinity and therefore of Theorem~\ref{thm:optimal_p-Hardy} is complete.
\end{proof}

\section{Proofs of the concrete inequalities}\label{sec:proofs_concrete}

This section proves Theorem~\ref{thm:optimal_rho_LB_form} in two steps. First, we establish that the weight $\rho^{(\ell,p)}$ defined in~\eqref{eq:optimal_rho_LB_form} is an optimal discrete $p$-Hardy--Rellich--Birman weight by showing that the sequence $\g^{(\ell,p)}$ from~\eqref{eq:def:g} satisfies the four assumptions of Theorem~\ref{thm:optimal_p-Hardy}; 
the second equality in~\eqref{eq:optimal_rho_LB_form} is proven simultaneously. Second, we show that the weight $\rho^{(\ell,p)}$ improves on the classical $p$-Birman weight, i.e., we prove inequality~\eqref{eq:ineq_improve_birman}.

\subsection{Proof of the optimality of \texorpdfstring{$\rho^{(\ell,p)}$}{r}}

First, we show that $\g^{(\ell,p)}$ behaves nicely under the action of the discrete divergence, i.e. it fulfills a full analogy to the classical differentiation formula for monomials
\[
    \quad \frac{\dd^k \mathfrak{m}^{(\ell,p)}}{\dd x^k}=\left(\ell-k+1/q\right)_{k}\, \mathfrak{m}^{(\ell-k,p)}, \quad\text{ with } \mathfrak{m}^{(\ell,p)}(x):=x^{\ell-1/p},
\]
where $(a)_\ell :=a(a+1)\dots(a+\ell-1)$ is the Pochhammer symbol.

\begin{lemma}
    \label{lem:div_g}
    For all $k\in\N_0$ and $n\in\Z$, we have the identity
    \begin{equation}
        \label{eq:lem:div_g}
        \ddiv^k\g^{(\ell,p)}_n=(\ell-k+1/q)_k\,\g^{(\ell-k,p)}_n.
    \end{equation}
\end{lemma}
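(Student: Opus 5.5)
The plan is to prove the case $k=1$ directly from the functional equation of the Gamma function, and then to obtain general $k$ by induction. The key structural fact is that the family $\g^{(\ell,p)}$ makes sense for every integer index $\ell$ (including $\ell\le 0$). This is because $1/\Gamma$ is entire, so
\[
\g^{(\ell,p)}_n=\frac{\Gamma(n+1/q)}{\Gamma(n-\ell+1)}
\]
is a well-defined sequence on $\Z$. At indices where $n+1/q$ hits a pole, which cannot happen since $1/q\notin\Z$, there would be a difficulty; but $n+1/q$ is never a non-positive integer, so the numerator is always finite. Thus identity~\eqref{eq:lem:div_g} is to be read for all $\ell\in\Z$ and $n\in\Z$, with the convention that $1/\Gamma$ vanishes at non-positive integers.

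For $k=1$, compute
\[
\ddiv\g^{(\ell,p)}_n=\frac{\Gamma(n+1+1/q)}{\Gamma(n-\ell+2)}-\frac{\Gamma(n+1/q)}{\Gamma(n-\ell+1)}.
\]
Put both terms over the common factor $\Gamma(n+1/q)/\Gamma(n-\ell+2)$, using $\Gamma(n+1+1/q)=(n+1/q)\Gamma(n+1/q)$ and $1/\Gamma(n-\ell+1)=(n-\ell+1)/\Gamma(n-\ell+2)$; the latter identity holds for all integers as an identity of entire functions of $1/\Gamma$. The bracket becomes $(n+1/q)-(n-\ell+1)=\ell-1+1/q=(\ell-1+1/q)_1$, while the prefactor is $\g^{(\ell-1,p)}_n$. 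This gives~\eqref{eq:lem:div_g} with $k=1$ for every $\ell\in\Z$.

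Induction on $k$ finishes the argument. The case $k=0$ is trivial, since $(a)_0=1$. Assuming the identity for $k$ and all $\ell$, apply $\ddiv$ and use linearity together with the $k=1$ case at index $\ell-k$:
\[
\ddiv^{k+1}\g^{(\ell,p)}=(\ell-k+1/q)_k\,(\ell-k-1+1/q)\,\g^{(\ell-k-1,p)}.
\]
The Pochhammer product $(\ell-k-1+1/q)(\ell-k+1/q)_k$ equals $(\ell-(k+1)+1/q)_{k+1}$, which closes the induction.

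The only delicate point is at indices $n$ where $n-\ell+1$ or $n-\ell+2$ is a non-positive integer. There the Gamma ratio manipulation must be justified through the entire function $1/\Gamma$ and the identity $1/\Gamma(z)=z/\Gamma(z+1)$, valid for all $z\in\C$, rather than through division. Provided this identity is used, the argument covers all $n\in\Z$ uniformly.
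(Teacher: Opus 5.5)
Your proof is correct, but it takes a different route from the paper.

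The paper does not induct on the lemma itself. It expands $(-1)^k\ddiv^k\g^{(\ell,p)}_n=\sum_{j=0}^k(-1)^j\binom{k}{j}\g^{(\ell,p)}_{n+j}$ and pulls out $\Gamma(n+1/q)$ via $\Gamma(n+j+1/q)=\Gamma(n+1/q)\,(n+1/q)_j$. It then evaluates the remaining sum in one stroke with the Chu--Vandermonde identity \eqref{eq:chu-van_id} for $b=n+1/q$, $c=n-\ell+1$, finishing with $(-1)^k(1-\ell-1/q)_k=(\ell-k+1/q)_k$.

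You instead isolate the one-step rule $\ddiv\g^{(m,p)}=(m-1+1/q)\,\g^{(m-1,p)}$, valid for every $m\in\Z$, and iterate it. This buys you two things:
\begin{enumerate}[(i)]
\item It is more elementary, since no hypergeometric identity is needed. Because $m-1+1/q=m-1/p$, it also makes the analogy with $\frac{\dd}{\dd x}x^{m-1/p}=(m-1/p)\,x^{m-1-1/p}$ visible at each step.
\item It forces you to state explicitly that $\g^{(m,p)}$ must be defined for non-positive $m$, and that the Gamma manipulations are read through the entire function $1/\Gamma$. The statement needs both for $k>\ell$ (e.g.\ in \eqref{eq:div^l_g_formula}, where $\g^{(0,p)}$ appears). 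The paper leaves this implicit, absorbed into the validity of \eqref{eq:chu-van_id} for all $b,c\in\C$.
\end{enumerate}

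The paper's route, for its part, establishes the closed-form identity \eqref{eq:chu-van_id} itself, which it reuses to obtain the simplified weight \eqref{eq:rho_l,2}. The induction on $k$ by which the paper says \eqref{eq:chu-van_id} is verified is essentially your one-step computation.
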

\begin{proof}
It follows from definition~\eqref{eq:def_grad_div_Z} of the discrete divergence that
\begin{equation}\label{eq:div^k_recur}
    (-1)^{k}\ddiv^k u_n = \sum_{j=0}^k (-1)^{j}\binom{k}{j} u_{n+j} \quad \text{ for all } n\in\Z.
\end{equation}
An application of~\eqref{eq:div^k_recur} to $u=\g^{(\ell,p)}$, defined by \eqref{eq:def:g}, and simple manipulations imply
\[
    (-1)^{k}\ddiv^k \g^{(\ell,p)}_n
      = \sum_{j=0}^k (-1)^j\binom{k}{j} \frac{\Gamma(n + j + 1/q)}{\Gamma(n - \ell + j + 1)}
      = \Gamma(n+1/q)\sum_{j=0}^k \binom{k}{j} \frac{(-1)^j(n + 1/q)_{j}}{\Gamma(n - \ell + j + 1)}
\]
for all $n \in \Z$. The last sum can be simplified with the aid of the Chu--Vandermonde identity~\cite[Eq.~(15.4.24)]{dlmf}, which reads
\begin{equation}
 \sum_{j=0}^{k}\binom{k}{j}\frac{(-1)^j(b)_{j}}{\Gamma(c+j)}=\frac{(c-b)_{k}}{\Gamma(c+k)} \quad \text{ for all } b,c\in\C,
\label{eq:chu-van_id}
\end{equation}
and can be easily verified by induction in $k\in\N_0$. Applying~\eqref{eq:chu-van_id}, we get
\[
    \ddiv^k \g^{(\ell,p)}_n 
      = (-1)^k (-\ell-1/q+1)_{k}\,\frac{\Gamma(n+1/q)}{\Gamma(n-\ell+k+1)}
      = (\ell-k+1/q)_{k}\,\g^{(\ell-k,p)}_n,
\]
and the proof of Lemma~\ref{lem:div_g} is complete.
\end{proof}

The explicit formula~\eqref{eq:lem:div_g} plays a crucial role, as it yields a closed-form expression
\begin{equation}
    \label{eq:div^l_g_formula}
    \ddiv^\ell\g^{(\ell,p)}_n=\left(\frac{1}{q}\right)_{\!\ell}\,\frac{\Gamma(n+1/q)}{\Gamma(n+1)}
    \quad\text{ for all } n \ge 0,
\end{equation}
which is a key identity in showing that the chosen parameter sequence $\g^{(\ell,p)}$ satisfies all assumptions of Theorem~\ref{thm:optimal_p-Hardy}.

As the next step, we verify that the ratio of Gamma functions, which appears in~\eqref{eq:div^l_g_formula}, defines a logarithmically completely monotone function on $\R_{+}$. Recall that a smooth function $g:\R_{+}\to\R_{+}$ is called 
\emph{logarithmically completely monotone} if $h=-(\log g)'$ is \emph{completely monotone} on $\R_+$, which, in turn, means that 
\begin{equation}
(-1)^k h^{(k)}(x)\ge0 \quad\text{ for all } k\in\N_0 \text{ and } x>0.
\label{eq:def_CM}
\end{equation}
For a comprehensive theory on completely monotone functions and related classes, we refer the reader to the monographs \cite{sch-son-von_12, wid_41}.
Here, we only need to know that, if $g$ is logarithmically completely monotone, then $g^{\alpha}$ is completely monotone on $\R_{+}$ for all $\alpha>0$ (this is not the case if $g$ is only completely monotone, see \cite{van-hae_96, van-hae_97}); this property is proven in \cite[Thm.~5.9]{sch-son-von_12}.

The following fact is well known and holds even in a greater generality, see \cite[Thm.~10]{alz_97}. We provide its short proof for the reader's convenience.

\begin{lemma}
    \label{lem:div_g_LCM}
    For any $q>1$, the function
    \begin{equation}
        \label{eq:def:G_fun}
        g(x):=\frac{\Gamma(x+1/q)}{\Gamma(x+1)}
    \end{equation}
    is logarithmically completely monotone on $\R_{+}$.
\end{lemma}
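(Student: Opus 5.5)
We must show that $h(x)=-(\log g)'(x)=\psi(x+1)-\psi(x+1/q)$ is completely monotone on $\R_+$, where $\psi=\Gamma'/\Gamma$ is the digamma function. The plan is to use the standard integral representation of the digamma function,
\[
\psi(z)=-\gamma+\int_0^\infty\frac{\ee^{-t}-\ee^{-zt}}{1-\ee^{-t}}\,\dd t,\qquad \operatorname{Re} z>0,
\]
which is valid for both $z=x+1$ and $z=x+1/q$ when $x>0$. Subtracting the two instances, the constant $-\gamma$ and the $\ee^{-t}$ terms cancel. This gives
\[
h(x)=\int_0^\infty \frac{\ee^{-(x+1/q)t}-\ee^{-(x+1)t}}{1-\ee^{-t}}\,\dd t
=\int_0^\infty \ee^{-xt}\,\frac{\ee^{-t/q}-\ee^{-t}}{1-\ee^{-t}}\,\dd t .
\]

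Since $q>1$, we have $1/q<1$, so the kernel $\phi(t):=(\ee^{-t/q}-\ee^{-t})/(1-\ee^{-t})$ is non-negative on $(0,\infty)$. It is also bounded: it tends to $1-1/q$ as $t\to0^+$ and decays like $\ee^{-t/q}$ as $t\to\infty$. Therefore $h$ is the Laplace transform of a non-negative function. For every $x>0$ and $k\in\N_0$, the integral $\int_0^\infty t^k\ee^{-xt}\phi(t)\,\dd t$ converges uniformly on compact subsets of $\R_+$. This justifies differentiating under the integral sign, which yields
\[
(-1)^k h^{(k)}(x)=\int_0^\infty t^k \ee^{-xt}\phi(t)\,\dd t\ge0 .
\]
This is exactly \eqref{eq:def_CM}. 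Positivity and smoothness of $g$ on $\R_+$ follow directly from those of $\Gamma$ on $(0,\infty)$.

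No step is a genuine obstacle. The only points needing care are to cite the digamma integral representation, for instance from \cite{dlmf}, and to check that the kernel is non-negative and integrable near $t=0$. At $t=0$ the numerator and denominator both vanish to first order, so the quotient stays bounded. One could alternatively argue via Bernstein's theorem, but the explicit Laplace-transform representation already gives complete monotonicity directly.
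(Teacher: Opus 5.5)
Your proposal is correct and is essentially the paper's proof: you write $h=\psi(x+1)-\psi(x+1/q)$ via the digamma integral representation as the Laplace transform of the positive kernel $(\ee^{-t/q}-\ee^{-t})/(1-\ee^{-t})$, then differentiate under the integral sign. Your extra remarks on the kernel's boundedness near $t=0$ and on uniform convergence just spell out justifications the paper leaves implicit.
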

\begin{proof}
    Clearly, $g>0$ on $\R_{+}$ and so we need to verify that the function
    \[
        h(x):=-(\log g(x))' = \psi(x+1) - \psi(x+1/q)
    \]
    where $\psi:=\Gamma'/\Gamma$ is the digamma function, is completely monotone on $\R_{+}$.
    Using the integral representation~\cite[Eq.~(5.9.16)]{dlmf}
    \begin{equation}
        \label{eq:psi_int_rep}
        \psi(x) = -\gamma + \int_{0}^{\infty} \frac{\ee^{-t} - \ee^{-xt}}{1 - \ee^{-t}} \dd t
    \end{equation}
    for $x>0$, where $\gamma$ denotes the \emph{Euler--Mascheroni constant}, we obtain 
    \[
        h(x) = \int_{0}^{\infty} \ee^{-xt} \,\frac{\ee^{-t/q} - \ee^{-t}}{1 - \ee^{-t}} \dd t
    \]
    for all $x>0$. Consequently, for all $k\in\N_{0}$ and $x>0$, we have
    \[
        (-1)^k h^{(k)}(x)
        = \int_0^\infty t^k\ee^{-xt} \,\frac{\ee^{-t/q} - \ee^{-t}}{1 - \ee^{-t}} \dd t > 0
    \]
    since $(\ee^{-t/q} - \ee^{-t})/(1 - \ee^{-t})>0$ for all $t>0$, which follows from the assumption $q>1$.
\end{proof}

Now, we can proceed to the verification of the assumptions of Theorem~\ref{thm:optimal_p-Hardy} with the parameter sequence $\g^{(\ell,p)}$.

\begin{proof}[Proof of optimality of $\rho^{(\ell,p)}$:]
By definitions~\eqref{eq:def_grad_div_Z}, \eqref{eq:def:g}, and Lemma~\ref{lem:div_g}, we find that
  \[
        \grad^k \g^{(\ell,p)}_n = (\ell - k + 1/q)_{k}\,\g^{(\ell-k,p)}_{n-k} > 0
  \]
for all $n \ge \ell \ge k \ge 0$. Therefore $\g^{(\ell,p)}$ satisfies the assumption \eqref{assump:1}. Moreover, by definition~\eqref{eq:def_p-lap}, we have
\[
    \rho^{(\ell,p)}_{n}=\frac{(-1)^{\ell}\ddiv^{\ell}\left(\ddiv^{\ell}\g^{(\ell,p)}\right)_{n-\ell}^{p-1}}{\big(\g^{(\ell,p)}_{n}\big)^{p-1}} \quad\text{ for all } n\geq\ell,
\]
from which, when applying formulas \eqref{eq:div^l_g_formula}, \eqref{eq:div^k_recur}, and \eqref{eq:def:g}, we verify the second equality from \eqref{eq:optimal_rho_LB_form}.

Next, we verify that $\g^{(\ell,p)}$ meets assumptions \eqref{assump:2} and \eqref{assumpp:3}. As mentioned above, since the function $g$ defined by~\eqref{eq:def:G_fun} is logarithmically completely monotone, its arbitrary positive power is completely monotone on $\R_{+}$. It follows that 
\[
(-1)^{k}\frac{\dd^{k}g^{p-1}}{\dd x^{k}}(x)\geq0 \quad\text{ for all } k\in\N_0 \text{ and } x>0.
\]
These inequalities are actually strict, since otherwise $g$ would have to be a constant function on $\R_{+}$, which is not the case. This is a consequence of Bernstein's theorem, see \cite[Rem.~1.5]{sch-son-von_12}. Bearing in mind formulas \eqref{eq:def:G_fun} and \eqref{eq:div^l_g_formula}, an application of Lemma~\ref{lem:MVT} implies that 
\begin{equation}
            (-1)^k\ddiv^k (\ddiv^\ell \g^{(\ell,p)}_n)^{p-1}>0
            \quad\text{ for all } k, n\in\N_0.
\label{eq:p-lap_g_pos}
\end{equation}
It follows that $\g^{(\ell,p)}$ satisfies assumptions \eqref{assump:2} and \eqref{assumpp:3} with strict inequalities.

Lastly, recall that the ratio of Gamma functions admits the complete asymptotic expansion~\cite[Eq.~(5.11.13)]{dlmf}
\begin{equation}
    \label{eq:Gamma_ratio_exp}
    \frac{\Gamma(x+a)}{\Gamma(x+b)}
    \sim
    x^{a-b}
    \sum_{k=0}^\infty \frac{G_k(a,b)}{x^k},
    \quad\text{ as } x\to\infty,
    \text{ for all } a,b\in\R,
\end{equation}
where the coefficients $G_{n}(a,b)$ can be expressed in terms of the generalized Bernoulli polynomials; see~\cite[Eq.~(5.11.17)]{dlmf} for details. In particular, $G_0(a,b)=1$. Applying~\eqref{eq:Gamma_ratio_exp} to definition~\eqref{eq:def:g} of $\g^{(\ell,p)}$, one readily verifies that $\g^{(\ell,p)}$ also satisfies assumption~\eqref{assump:4}.

As $\g^{(\ell,p)}$ fulfills all the assumptions, we may apply Theorem~\ref{thm:optimal_p-Hardy}, which implies that $\rho^{(\ell,p)}$ is an optimal discrete $p$-Hardy--Rellich--Birman weight.
\end{proof}

\subsection{Proof of inequality~\texorpdfstring{\eqref{eq:ineq_improve_birman}}{ref}}

To complete the proof of Theorem~\ref{thm:optimal_rho_LB_form}, it remains to show that $\rho^{(\ell,p)}$ improves upon the classical $p$-Birman weight. Recall that, by~\eqref{eq:optimal_rho_LB_form}, we have the formula 
\[
\rho_n^{(\ell,p)}
          = \left(\frac{1}{q}\right)_{\!\ell}^{\!p-1} \left[\frac{\Gamma(n-\ell+1)}{\Gamma(n+1/q)}\right]^{p-1}\sum_{j=0}^{\ell}(-1)^{j}\binom{\ell}{j}\left[\frac{\Gamma(n-\ell+j+1/q)}{\Gamma(n-\ell+j+1)}\right]^{p-1} \quad\text{ for } n\geq\ell.
\]
In the following two lemmas, we separately estimate the prefactor and the sum itself. The latter is non-trivial because the sum involves terms of alternating sign.

\begin{lemma}
        \label{lem:LB_1}
        For all $n\ge\ell$, we have the inequality
        \[
            \frac{\Gamma(n-\ell+1)}{\Gamma(n+1/q)}>\frac{1}{n^{\ell-1/p}}.
        \]
\end{lemma}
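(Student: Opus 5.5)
Equivalently, we must show $\Gamma(n+1/q)/\Gamma(n-\ell+1) < n^{\ell-1/p}$ for $n\ge\ell$. The plan is to split the ratio into a telescoping product plus one Gamma ratio. Since $1/q = 1-1/p$, we have $n+1/q = n+1-1/p$, and the functional equation gives
\[
\frac{\Gamma(n+1-1/p)}{\Gamma(n-\ell+1)} = \frac{\Gamma(n+1-1/p)}{\Gamma(n+1)}\cdot\frac{\Gamma(n+1)}{\Gamma(n-\ell+1)} = \frac{\Gamma(n+1-1/p)}{\Gamma(n+1)}\, n(n-1)\cdots(n-\ell+1).
\]
The falling factorial is at most $n^{\ell}$, with equality only when $\ell=1$. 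It therefore suffices to prove the strict inequality
\[
\frac{\Gamma(n+1-1/p)}{\Gamma(n+1)} < n^{-1/p} \quad\text{for all } n\ge1,
\]
since multiplying it by $n(n-1)\cdots(n-\ell+1)\le n^\ell$ gives the claim.

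The reduced inequality is a Gautschi/Wendel-type bound. I would prove it using log-convexity of $\Gamma$. Write $n+1-1/p = (1-1/p)(n+1) + (1/p)\,n$. Then
\[
\Gamma(n+1-1/p) \le \Gamma(n+1)^{1-1/p}\,\Gamma(n)^{1/p} = \Gamma(n+1)\, n^{-1/p},
\]
where the last equality uses $\Gamma(n)=\Gamma(n+1)/n$. Dividing by $\Gamma(n+1)$ gives exactly the desired bound.

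For strictness, note that $\log\Gamma$ is strictly convex, since $\psi'>0$ by the integral representation \eqref{eq:psi_int_rep}. Because $0<1/p<1$ and $n\neq n+1$, Jensen's inequality is strict. Alternatively, $\Gamma(x+1/q)/\Gamma(x+1)$ is strictly decreasing by Lemma~\ref{lem:div_g_LCM}, though the convexity argument is more direct.

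The only delicate point is keeping the inequality strict when $\ell=1$, where the falling-factorial bound is an equality. Strict log-convexity handles this, so I expect no real obstacle. The main care needed is matching $1/q=1-1/p$ so that the convex-combination weights come out correctly.
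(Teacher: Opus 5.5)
Your proof is correct and matches the paper's argument: the paper uses the same factorization through $\Gamma(n+1)$, bounds the falling factorial by $n^\ell$, and reduces everything to $\Gamma(n+1)/\Gamma(n+1/q) > n^{1/p}$. The only difference is that the paper cites Gautschi's inequality \eqref{eq:gautschi} for this last bound, whereas you derive it directly from strict log-convexity of $\Gamma$ (Wendel's argument). Your version also makes explicit that strictness for $\ell=1$ comes from that step, not from the falling-factorial bound, which the paper writes as strict although it is an equality when $\ell=1$.
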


\begin{proof}
First, we estimate
        \[
            \frac{\Gamma(n-\ell+1)}{\Gamma(n+1/q)}
              = \frac{1}{n(n-1)\cdots(n-\ell+1)}\,\frac{\Gamma(n+1)}{\Gamma(n+1/q)}
              > \frac{1}{n^{\ell}}\,\frac{\Gamma(n+1)}{\Gamma(n+1/q)}.
        \]
On the right-hand side, we apply Gautschi's inequality \cite[Eq.~(5.6.4)]{dlmf}, which states that
        \begin{equation}
            \label{eq:gautschi}
            x^{1-s} < \frac{\Gamma(x+1)}{\Gamma(x+s)}<(x+1)^{1-s}
            \quad\text{ for all } x>0 \text{ and }s\in(0,1), 
        \end{equation}
and the claim follows.
\end{proof}

Notice that the next lemma excludes the case $\ell=1$.

\begin{lemma}
        \label{lem:LB_2}
        Let $\ell\geq2$. Then the inequality
        \[
            \sum_{j=0}^{\ell}(-1)^{j}\binom{\ell}{j}\left[\frac{\Gamma(n-\ell+j+1/q)}{\Gamma(n-\ell+j+1)}\right]^{p-1}
              \geq \left(\frac{1}{q}\right)_{\!\ell}\,\frac{1}{n^{\ell+1/q}}
        \]
        holds for all $n\ge\ell$.
\end{lemma}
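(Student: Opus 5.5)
The plan is to recognise the alternating sum as an $\ell$-th forward difference of $G:=g^{p-1}$, where $g$ is the function \eqref{eq:def:G_fun}, to turn that difference into an averaged $\ell$-th derivative, and then to compare $G$ with the explicit completely monotone function $(x+1)^{-1/q}$. Put $m:=n-\ell\ge0$. By \eqref{eq:div^k_recur} the left-hand side equals $(-1)^\ell\ddiv^\ell G_m$, with $G_m:=G(m)$. Instead of the one-point mean value form of Lemma~\ref{lem:MVT}, I will use its integral (B-spline) form:
\[
\ddiv^\ell G_m=\int_0^\ell M_\ell(s)\,G^{(\ell)}(m+s)\,\dd s .
\]
Here $M_\ell\ge0$ is the cardinal B-spline. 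It has unit mass, and its mean is $\ell/2$. This is valid for $m>0$; the endpoint case $m=0$ should follow by continuity, or be handled by the one-sided finiteness of $G$ at $0^+$.

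Since $G$ is completely monotone by Lemma~\ref{lem:div_g_LCM}, the function $\phi:=(-1)^\ell G^{(\ell)}$ is completely monotone as well. In particular $\phi$ is convex and non-increasing. Jensen's inequality then gives $(-1)^\ell\ddiv^\ell G_m\ge\phi(m+\ell/2)=\phi(n-\ell/2)$. This is exactly where $\ell\ge2$ is used: it yields $n-\ell/2\le n-1$, so that $\phi(n-\ell/2)\ge\phi(n-1)$. It therefore suffices to prove the pointwise bound
\[
(-1)^\ell G^{(\ell)}(x)\ge\Bigl(\frac1q\Bigr)_{\!\ell}(x+1)^{-\ell-1/q},\qquad x>0,
\]
and then to set $x=n-1$.

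For this bound I write $G=\ee^{-(p-1)H}$ with $H'=h=\psi(x+1)-\psi(x+1/q)$. Faà di Bruno's formula gives $(-1)^\ell G^{(\ell)}=G\cdot Y_\ell$. Here $Y_\ell$ is the complete Bell polynomial, with non-negative coefficients, evaluated at the quantities $(p-1)(-1)^{k-1}h^{(k-1)}$, $k=1,\dots,\ell$, which are non-negative. So $(-1)^\ell G^{(\ell)}$ is monotone in $G$ and in each of these quantities. The same identity holds for the comparison function $G_0(x):=(x+1)^{-1/q}$, for which $-(\log G_0)'=(p-1)\,\frac{1}{p(x+1)}$. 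It is therefore enough to establish two comparisons.
\begin{enumerate}[(i)]
\item $G(x)\ge(x+1)^{-1/q}$. This follows from Gautschi's inequality \eqref{eq:gautschi} with $s=1/q$, which gives $g(x)>(x+1)^{-1/p}$; raising to the power $p-1$ gives the claim.
\item $(-1)^k h^{(k)}(x)\ge k!/\bigl(p(x+1)^{k+1}\bigr)$ for all $k\in\N_0$. From the integral representation in the proof of Lemma~\ref{lem:div_g_LCM}, the kernel of $h$ is $f(t)=(\ee^{t/p}-1)/(\ee^t-1)$. So it suffices to show $f(t)\ge\ee^{-t}/p$, that is, $p(\ee^{t/p}-1)\ge1-\ee^{-t}$. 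This holds because the left-hand side is at least $t$ and the right-hand side is at most $t$. Integrating $t^k\ee^{-xt}$ against both sides gives the claim.
\end{enumerate}
Combining (i) and (ii) yields $(-1)^\ell G^{(\ell)}(x)\ge(-1)^\ell G_0^{(\ell)}(x)=(1/q)_\ell(x+1)^{-\ell-1/q}$, which is the required pointwise bound.

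The main obstacle is the averaging step. Lemma~\ref{lem:MVT} only places the intermediate point $\xi$ somewhere in $(n-\ell,n)$. Evaluating at $n$ costs a shift by $1$ in the comparison, and that shift must be compensated. My first attempt is the B-spline/Jensen argument above, which moves the effective point to the midpoint $n-\ell/2$. If justifying the B-spline representation at $m=0$ becomes awkward, I would instead apply the representation on the interior range and treat $n=\ell$ separately by a direct check.
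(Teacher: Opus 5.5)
Your proof is correct, and it is organized differently from the paper's. The paper first shows that $f(x)=G(x)-(x+1)^{-1/q}$ is completely monotone, using three facts:
\begin{itemize}
\item $(x+1)^{1/p}g$ is logarithmically completely monotone;
\item $(x+1)^{1/p}g>1$, by Gautschi's inequality;
\item $\mathcal{CM}$ is closed under products.
\end{itemize}
It then applies Lemma~\ref{lem:MVT} to obtain the discrete comparison \eqref{eq:lem:LB_2_pr:1}. In other words, it replaces $G$ by $(x+1)^{-1/q}$ already at the level of $\ell$-th differences. Only afterwards does it evaluate the explicit alternating sum, as a Laplace integral with factor $(\ee^t-1)^\ell\ee^{-t}$, and use $(\ee^t-1)^\ell\ee^{-t}\ge t^\ell$ for $\ell\ge2$.

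You reverse the order. You apply the B-spline/Jensen averaging to $G$ itself, which reduces everything to a pointwise derivative bound at $x=n-1$. You then prove that bound by a Fa\`a di Bruno comparison.

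Your Fa\`a di Bruno step is effectively a hands-on proof that $(-1)^k(G-G_0)^{(k)}\ge0$ for all $k$, that is, of the complete monotonicity of $f$. It avoids citing the theorems on powers of logarithmically completely monotone functions and on products. Your kernel inequality $(\ee^{t/p}-1)/(\ee^t-1)\ge\ee^{-t}/p$ is exactly the positivity of the density $\rho$ in the paper.

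Your Jensen step makes it transparent that $\ell\ge2$ enters only through the centroid $\ell/2\ge1$ of the Irwin--Hall density. The paper's inequality $((\ee^t-1)/t)^\ell\ge\ee^t$ is the same fact in disguise, since $((\ee^t-1)/t)^\ell$ is the moment generating function of that density.

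What the paper's ordering buys is the intermediate inequality \eqref{eq:lem:LB_2_pr:1}. It holds for every $\ell$ and confines the restriction $\ell\ge2$ to an elementary explicit computation. Your route instead needs convexity of $(-1)^\ell G^{(\ell)}$, i.e.\ complete monotonicity of $G$ up to order $\ell+2$, but it avoids the integral evaluation. Either route can be pushed to the slightly sharper bound $(1/q)_\ell(n+1-\ell/2)^{-\ell-1/q}$.

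The endpoint $m=0$ needs no separate treatment. The function $g$, and hence $G$, is real-analytic and positive on $(-1/q,\infty)$, so your integral representation holds at $m=0$ as it stands.
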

\begin{proof}
        The proof is divided into two steps. First, we replace the ratio of Gamma functions on the left-hand side of the inequality with a more tractable power sequence. Second, we obtain a lower bound for this new sequence that coincides with the right-hand side of the inequality.

        \emph{Step 1}: 
        We show that the inequality
        \begin{equation}
            \label{eq:lem:LB_2_pr:1}
            \sum_{j=0}^{\ell}(-1)^{j}\binom{\ell}{j}\left[\frac{\Gamma(n-\ell+j+1/q)}{\Gamma(n-\ell+j+1)}\right]^{p-1} 
              \ge \sum_{j=0}^{\ell}(-1)^{j}\binom{\ell}{j} \frac{1}{(n-\ell+j+1)^{1/q}}
        \end{equation}
        holds for all $n\ge\ell\ge0$. By~\eqref{eq:div^k_recur}, inequality \eqref{eq:lem:LB_2_pr:1} can be equivalently rewritten as
        \[
            (-1)^\ell\ddiv^\ell\left[ \left(\frac{\Gamma(n+1/q)}{\Gamma(n+1)}\right)^{p-1}\!-\frac{1}{(n+1)^{1/q}} \right]
              \ge 0
              \quad\text{ for all } n,\ell\in\N_0.
        \]
        By definition of complete monotonicity, see~\eqref{eq:def_CM}, and Lemma \ref{lem:MVT}, the last inequalities will be established once we show that the function
        \[
           f(x):=\left(\frac{\Gamma(x+1/q)}{\Gamma(x+1)}\right)^{p-1} \!-\frac{1}{(x+1)^{1/q}}
        \]
        is completely monotone on $\R_{+}$, which shall be done next.

        First, we prove that the function
        \[
            g(x):=(x+1)^{1/p}\,\frac{\Gamma(x+1/q)}{\Gamma(x+1)}
        \]
        is logarithmically completely monotone by a slight modification of the proof of Lemma~\ref{lem:div_g_LCM}. 
        Clearly, $g>0$ on $\R_{+}$ and 
        \[
            h(x):=-(\log{g(x)})'=\psi(x+1)-\psi(x+1/q)-\frac{1}{p(x+1)}
            \quad\text{ for all }x>0.
        \]
        Invoking integral representation \eqref{eq:psi_int_rep} and the elementary equality
        \[
            \frac{1}{x+1}=\int_0^\infty \ee^{-(x+1)t} \dd t
            \quad\text{ for }x>0,
        \]
        we may express the function $h$ as the Laplace transform 
        \[
            h(x)=\int_0^\infty \ee^{-xt} \rho(t) \dd t
            \quad\text{ for all }x>0,
            \,\text{ with }\,
            \rho(t):=\ee^{-t}\left(\frac{\ee^{t/p}-1}{1-\ee^{-t}}-\frac{1}{p}\right)
            \quad\text{ for all }t>0.
        \]
        Elementary computations show that the density $\rho>0$ on $\R_{+}$, from which we infer that $h$ is completely monotone, and so $g$ is logarithmically completely monotone. Consequently, $g^{p-1}$ is completely monotone. 

        From Gautschi's inequality~\eqref{eq:gautschi}, we additionally deduce that $g(x) > 1$ for all $x > 0$. Therefore, also the function $g^{p-1} - 1$ is completely monotone by definition. Since a product of completely monotone functions is a completely monotone function (see \cite[Cor.~1.6]{sch-son-von_12}),
        \[
        f(x)=(x+1)^{-1/q}\left(g^{p-1}(x)-1\right) \quad\text{ for all } x>0,
        \]
        and the function $x\mapsto(x+1)^{-1/q}$ is completely monotone on $\R_{+}$, we conclude that $f$ is completely monotone on $\R_{+}$, too.

        \emph{Step 2}: We further estimate the right-hand side of \eqref{eq:lem:LB_2_pr:1} as
        \begin{equation}
            \label{eq:lem:LB_2_pr:2}
            \sum_{j=0}^{\ell}(-1)^{j}\binom{\ell}{j} \frac{1}{(n-\ell+j+1)^{1/q}} \geq \left(\frac{1}{q}\right)_{\!\ell}\,\frac{1}{n^{\ell+1/q}}
        \end{equation}
        for all $n\ge\ell\ge2$ (note that the inequality fails for $\ell=1$). By applying the elementary formula 
        \[
            \frac{1}{x^\lambda}=\frac{1}{\Gamma(\lambda)}\int_0^\infty t^{\lambda-1} \ee^{-xt}\,\dd t,
        \]
        valid for all $\lambda,x>0$, and the binomial theorem, we find that 
        \begin{align*}
            \sum_{j=0}^{\ell}(-1)^{j}\binom{\ell}{j} \frac{1}{(n-\ell+j+1)^{1/q}} &= \frac{1}{\Gamma(1/q)}\,\sum_{j=0}^{\ell}(-1)^{j}\binom{\ell}{j} \int_0^\infty t^{1/q-1} \ee^{-(n-\ell+j+1)t} \,\dd t \\
            &= \frac{1}{\Gamma(1/q)}\,\int_0^\infty t^{1/q-1}\,(1-\ee^{-t})^\ell \ee^{(\ell-1)t}\,\ee^{-nt} \,\dd t.
        \end{align*}
        Next, we estimate the integrand as $(1-\ee^{-t})^\ell \ee^{(\ell-1)t} \ge t^\ell$, which follows from elementary inequalities
        \[
            \left(\frac{\ee^t-1}{t}\right)^{\!\ell} \geq \left(\frac{\ee^t-1}{t}\right)^{\!2} \geq \ee^t, \quad\text{ for all } t>0 \text{ and } \ell\geq2.
        \]
        The resulting estimate reads
        \[
        \sum_{j=0}^{\ell}(-1)^{j}\binom{\ell}{j} \frac{1}{(n-\ell+j+1)^{1/q}} \geq
        \frac{1}{\Gamma(1/q)}\int_0^\infty t^{\ell+1/q-1}\,\ee^{-nt} \,\dd t = \frac{\Gamma(\ell+1/q)}{\Gamma(1/q)}\frac{1}{n^{\ell+1/q}},
        \]
        which implies~\eqref{eq:lem:LB_2_pr:2}. Combining inequalities \eqref{eq:lem:LB_2_pr:1} and \eqref{eq:lem:LB_2_pr:2} concludes the proof.
    \end{proof}

\begin{proof}[Proof of inequality~\eqref{eq:ineq_improve_birman}:]
If $\ell\geq2$, inequality~\eqref{eq:ineq_improve_birman} readily follows from formula~\eqref{eq:optimal_rho_LB_form} and Lemmas~\ref{lem:LB_1} and~\ref{lem:LB_2}. Since Lemma~\ref{lem:LB_2} does not extend to $\ell=1$, this case must be treated separately.

If $\ell=1$, it is an easy exercise to expand~\eqref{eq:rho_1,p} into the convergent power series
\[
\rho^{(1,p)}_n = \left(\frac{1}{q}\right)^{\!p} \sum_{k=0}^\infty \frac{(p)_{k}}{p^k}\frac{1}{(k+1)!}\frac{1}{n^{k+p}}
\quad\text{ for all } n\in\N;
\]
cf. Remark~\ref{rem:conj_posit_coeff}. Notice that all coefficients of the power series are positive. Using only the very first one, we deduce inequality~\eqref{eq:ineq_improve_birman} for $\ell=1$.

The proof of inequality~\eqref{eq:ineq_improve_birman}, and hence Theorem~\ref{thm:optimal_rho_LB_form}, is complete.
\end{proof}

\section{Addendum: another choice for the parameter sequence}\label{sec:add}

In this final section, we consider the parameter sequence $\tilde{\g}^{(\ell,p)}$ defined by~\eqref{eq:def:g_tilde}. The corresponding weight $\tilde{\rho}^{(\ell,p)}$, given by~\eqref{eq:def:rho_tilde}, was shown to be an optimal discrete $p$-Hardy--Rellich--Birman weight for $\ell=1$ and $p>1$ in~\cite{fis-kel-pog_23} or $\ell\in\N$ and $p=2$ in~\cite{sta-wac_26acc}. Here, we complement these results by the following claim.

\begin{samepage}
\begin{theorem}\label{thm:tilde_g_opt}
Let one of the following conditions hold:
    \begin{enumerate}[(i)]
    \setlength{\itemsep}{1ex}
        \item $\ell\in\N$ and $p\ge2$ is an integer,
        \item $\ell\in\{3,4,5,6,7\}$ and $p>2$,
        \item $\ell\in\{1,2\}$ and $p>1$.
    \end{enumerate}
    Then $\tilde{\rho}^{(\ell,p)}$ is a discrete $p$-Hardy--Rellich--Birman weight, which is critical, non-attainable in $\H_0^\ell$, and optimal near infinity. 
\end{theorem}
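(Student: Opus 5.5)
The plan is to apply Theorem~\ref{thm:optimal_p-Hardy}, together with Remark~\ref{rem:(A1-3)_impl_weak-opt}, to the parameter sequence $\tilde{\g}=\tilde{\g}^{(\ell,p)}$. The claimed properties are weight, criticality, non-attainability in $\H_0^\ell$, and optimality near infinity; strict positivity in \eqref{assumpp:3} is not required. So it suffices to verify \eqref{assump:1}, \eqref{assump:2}, \eqref{assumpp:3} and \eqref{assump:4}.

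\textbf{Easy assumptions.} Assumption \eqref{assump:4} is immediate. Indeed, $\tilde{\g}_n=n^{1/q}\prod_{j=1}^{\ell-1}(n-j)$ is a finite sum of powers $n^{\ell-j-1/p}$, $j=0,\dots,\ell-1$, with leading coefficient $1$, so the expansion holds with no error term.

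For \eqref{assump:1}, view $\tilde{\g}$ on $[\ell-1,\infty)$ as the function $G(x)=x^{1/q}\prod_{j=1}^{\ell-1}(x-j)$. Note that $\grad^k\tilde{\g}_n=\ddiv^k\tilde{\g}_{n-k}$, and the values at $n-k,\dots,n$ may involve the zero-padded region $n<\ell$. I would handle this by writing $\grad^k\tilde{\g}_n$ via \eqref{eq:div^k_recur} and using the vanishing of $G$ at $1,\dots,\ell-1$. On those integers $G$ and the zero extension agree, and $G(0)=0$ as well. Hence $\grad^k\tilde{\g}_n=\ddiv^kG(n-k)$ whenever $n-k\ge0$, which holds for $n\ge\ell\ge k$. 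By Lemma~\ref{lem:MVT} this equals $G^{(k)}(\xi)$ for some $\xi\in(n-k,n)$.

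I would then show $G^{(k)}>0$ on $(\ell-k,\infty)$ for $k\le\ell$. Expanding $\prod_{j=1}^{\ell-1}(x-j)$ in powers of $x$ gives signed Stirling coefficients, so the sign is not obvious. I would instead proceed inductively with Rolle's theorem: $G$ has zeros at $0,1,\dots,\ell-1$, so $G^{(k)}$ has at least $\ell-k$ zeros in $[0,\ell-1]$. Combined with the growth of $G^{(k)}$ at infinity, this should locate the last zero below $\ell-k$. This step may need care but seems routine.

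\textbf{Main obstacle: the signs in \eqref{assump:2}--\eqref{assumpp:3}.} The hard part is the sign of $(-1)^{\ell-k}\ddiv^{\ell-k}\bigl((\ddiv^\ell\tilde{\g})^{p-1}\bigr)$, suitably shifted, for $k=0,\dots,\ell-1$. Unlike Lemma~\ref{lem:div_g}, the sequence $\ddiv^\ell\tilde{\g}_n$ has no closed form. I would write it as $\ddiv^\ell\tilde{\g}_n=\int_{[0,1]^\ell}G^{(\ell)}(n+t_1+\dots+t_\ell)\,\dd t$, i.e.\ as an average of $G^{(\ell)}$ against a B-spline.

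Since $G^{(\ell)}(x)=\sum_i c_i x^{1/q-i}$, the goal is to prove that $x\mapsto(\ddiv^\ell G(x))^{p-1}$ is completely monotone, or at least $(\ell)$-monotone, for $x\ge0$. Lemma~\ref{lem:MVT} then converts this into the required discrete sign conditions, exactly as in \eqref{eq:p-lap_g_pos}.

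\textbf{Case (i), integer $p\ge2$.} Here $(p-1)$ is a positive integer. Complete monotonicity of $\ddiv^\ell G$ (a positive function) then suffices, since products of completely monotone functions are completely monotone. It reduces to showing that $G^{(\ell+1)}$ is absolutely monotone in sign pattern, via a Bernstein representation of $x^{1/q}\prod_j(x-j)$ differentiated $\ell$ times.

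\textbf{Case (ii), $\ell\le7$ and real $p>2$.} I expect no structural argument. I would establish logarithmic complete monotonicity of $\ddiv^\ell G$ by expressing $-(\log\ddiv^\ell G)'$ as a Laplace transform and checking positivity of the density for each of the finitely many $\ell$. This is where the restriction on $\ell$ presumably enters, and it is the step I expect to fail beyond small $\ell$.

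\textbf{Case (iii), $\ell\in\{1,2\}$.} For $\ell=1$ only \eqref{assumpp:3} with $k=0$ is needed, as in \cite{fis-kel-pog_23}. For $\ell=2$ one additional first-order sign condition remains, which can be checked directly by a convexity argument for all $p>1$.
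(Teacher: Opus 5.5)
Your overall plan matches the paper's. You verify \eqref{assump:1}--\eqref{assump:4} for $\tilde{\g}^{(\ell,p)}$ and invoke Remark~\ref{rem:(A1-3)_impl_weak-opt}, reducing \eqref{assump:2}--\eqref{assumpp:3} via Lemma~\ref{lem:MVT} to $\tilde g^{\,p-1}\in\mathcal{CM}^{\ell}$ with $\tilde g=\ddiv^\ell G$.

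\textbf{The main gap is case (ii).} You propose to show that $\tilde g$ is logarithmically completely monotone by checking positivity of a Laplace density ``for each of the finitely many $\ell$''. No argument is given, and the check is not finite, since $p$ remains a continuous parameter. This is exactly the property the paper says is \emph{not} available for $\tilde g$, in contrast to $\g^{(\ell,p)}$, where Lemma~\ref{lem:div_g_LCM} supplies it. If it held, $\tilde g^{\lambda}$ would be completely monotone for every $\lambda>0$, giving \eqref{assump:2}--\eqref{assumpp:3} for all $p>1$. So your route cannot explain the hypotheses $p>2$ and $\ell\le 7$. The missing ingredient is van Haeringen's result (Lemma~\ref{lem:CM_order}, part 2): for $N\le7$ and $\lambda>1$, $\mathcal{CM}\subset\mathcal{CM}^N_\lambda$. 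Apply it with $N=\ell$, $\lambda=p-1$ and $\tilde g\in\mathcal{CM}$ (Lemma~\ref{lem:tilde_g_CM}). This gives $\tilde g^{\,p-1}\in\mathcal{CM}^\ell$, and it is where both restrictions come from.

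\textbf{Your route to \eqref{assump:1} rests on a false claim.} The statement $G^{(k)}>0$ on $(\ell-k,\infty)$ fails for $k<\ell$. For $\ell=4$, $p=2$, $k=3$ one gets $x^{5/2}G'''(x)=13.125x^3-11.25x^2-4.125x-2.25$. Its only positive zero lies in $(1.2,1.3)$, and $G'''(1.2)\approx-0.46$. So the mean-value argument cannot certify $\grad^3\tilde{\g}_4>0$, even though it is true.

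What you missed is that the Stirling signs do align for derivatives of order at least $\ell$. Indeed, $\frac{\dd^{\ell+m}}{\dd x^{\ell+m}}x^{j-1/p}$ has sign $(-1)^{\ell+m-j}$ and $s(\ell,j)$ has sign $(-1)^{\ell+j}$. Hence $(-1)^mG^{(\ell+m)}$ is a positive combination of the powers $x^{j-\ell-m-1/p}$ on $(0,\infty)$, i.e.\ $G^{(\ell)}$ is completely monotone. This gives three things:
\begin{itemize}
\item \eqref{assump:1} for $k=\ell$ directly.
\item The lower orders by summation, $\grad^k\tilde{\g}_n=\sum_{m=\ell}^{n}\grad^{k+1}\tilde{\g}_m$, since $\grad^k\tilde{\g}_{\ell-1}=0$.
\item Exactly what case (i) needs, through your B-spline average; no Bernstein representation is required. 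Note that $-G^{(\ell+1)}$, not $G^{(\ell+1)}$, is completely monotone; ``absolutely monotone'' is the wrong sign pattern.
\end{itemize}

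\textbf{Case $\ell=2$.} The condition beyond monotonicity is \eqref{assumpp:3}, i.e.\ discrete convexity of $\tilde g^{\,p-1}$. This is second order, not first order. For $p<2$, convexity of $\tilde g$ alone does not give it. It does follow from log-convexity of completely monotone functions (Cauchy--Schwarz in the Bernstein representation), which makes $\tilde g^{\lambda}$ decreasing and convex for every $\lambda>0$. This is shorter than the paper's route through the Lorch--Newman result and part 1) of Lemma~\ref{lem:CM_order}.
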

\end{samepage}

We will show that, under the assumption of Theorem~\ref{thm:tilde_g_opt}, the sequence $\tilde{\g}^{(\ell,p)}$ satisfies assumptions \eqref{assump:1}--\eqref{assump:4}. To this end, we first examine properties of $\tilde{\g}^{(\ell,p)}$, as a function on $\R_{+}$, extended naturally by the formula
\begin{equation} \label{eq:g_real}
        \tilde{\g}^{(\ell,p)}(x):=x^{1-1/p}\prod_{j=1}^{\ell-1}(x-j), \quad \text{ for } x>0.
\end{equation}

By the combinatorial identity \cite[Eq.~(26.8.7)]{dlmf}, we have
\begin{equation} \label{eq:g:stirling}
    \tilde{\g}^{(\ell,p)}(x) = \sum_{j=1}^\ell s(\ell,j)\,x^{j-1/p},
\end{equation}
where 
\[
    s(\ell,j) := (-1)^{\ell+j} \sum_{1\leq i_{1}<\dots<i_{\ell-j}<\ell} i_{1}i_{2}\dots i_{\ell-j}
\]
are the (signed) Stirling numbers of the first kind; see \cite[\S26.8]{dlmf} for more details. Notice that $(-1)^{\ell+j}s(\ell,j)>0$ for all $1\leq j\leq\ell$. 

We similarly extend the definition of discrete divergence \eqref{eq:def_grad_div_Z} to functions of $x>0$ and put 
\begin{equation}
    \label{eq:func_tilde_g}
\tilde{g}(x):=\ddiv^\ell\,\tilde{\g}^{(\ell,p)}(x), \quad\text{ where }\;
            \ddiv^\ell\,\tilde{\g}^{(\ell,p)}(x):=\sum_{j=0}^\ell \binom{\ell}{j}(-1)^{\ell+j}\,\tilde{\g}^{(\ell,p)}(x+j),
\end{equation}
for all $x>0$.

\begin{lemma}\label{lem:tilde_g_CM}
    The function $\tilde{g}$ defined by~\eqref{eq:func_tilde_g} is completely monotone on $\R_+$.
\end{lemma}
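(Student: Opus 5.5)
Write $\tilde{\g}^{(\ell,p)}(x)=\sum_{j=1}^\ell s(\ell,j)\,x^{j-1/p}$ as in \eqref{eq:g:stirling}. Then $\tilde g=\ddiv^\ell\tilde{\g}^{(\ell,p)}$ is an $\ell$-th forward difference. I would express it as an integral of the $\ell$-th derivative against a B-spline kernel:
\[
\ddiv^\ell f(x)=\int_0^\ell f^{(\ell)}(x+t)\,B_\ell(t)\,\dd t,
\]
where $B_\ell\ge0$ is the cardinal B-spline of order $\ell$. This refines Lemma~\ref{lem:MVT}, and it is proved by writing $\ddiv=\int_0^1 \frac{\dd}{\dd x}(\cdot+s)\,\dd s$ and iterating. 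Since a mixture, with nonnegative weights, of translates $x\mapsto \phi(x+t)$, $t\ge0$, of a completely monotone function $\phi$ is again completely monotone, it suffices to show that $\phi:=(\tilde{\g}^{(\ell,p)})^{(\ell)}$ is completely monotone on $\R_+$. This is the fully continuous analogue of the statement.

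The key computation is then $\phi(x)=\frac{\dd^\ell}{\dd x^\ell}\bigl[x^{1-1/p}\prod_{j=1}^{\ell-1}(x-j)\bigr]$. By \eqref{eq:g:stirling} we get $\phi(x)=\sum_{j=1}^\ell s(\ell,j)\,(j-1/p)(j-1-1/p)\cdots(j-\ell+1-1/p)\,x^{j-\ell-1/p}$. For each $j$ the falling factorial $(j-1/p)^{\underline{\ell}}$ contains exactly $\ell-j$ negative factors, namely $(i-1/p)$ for $i=j-\ell+1,\dots,0$. Its sign is therefore $(-1)^{\ell-j}$, which matches the sign $(-1)^{\ell+j}$ of $s(\ell,j)$. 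Each coefficient is thus positive, and $\phi$ is a positive combination of the functions $x^{-\lambda}$ with $\lambda=\ell-j+1/p>0$. Each such function is completely monotone (for instance via $x^{-\lambda}=\Gamma(\lambda)^{-1}\int_0^\infty t^{\lambda-1}\ee^{-xt}\,\dd t$), so $\phi$ is completely monotone. Combining this with the B-spline representation gives the lemma, and every derivative of $\tilde g$ may be differentiated under the integral because the integrand is smooth on compact $t$-ranges.

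The main obstacle is checking the sign bookkeeping carefully, including the boundary case $j=\ell$, where no factor is negative and the coefficient $s(\ell,\ell)(1-1/p)_{\ell}$-type product is positive. A second point is that translates $x+t$ with $t\in[0,\ell]$ stay in $\R_+$, so nothing at $x=0$ interferes. If the B-spline route felt too heavy, I would instead write $\ddiv^\ell x^{-\mu}$-type expressions directly through the Laplace representation. The cleanest version is to apply $\ddiv^\ell$ to $\phi$'s antiderivative structure: writing $\tilde{\g}^{(\ell,p)}$ itself as a Laplace-type integral is not possible because its exponents are positive, which is exactly why I pass through the $\ell$-th derivative first.
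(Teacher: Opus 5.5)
Your proposal is correct and follows essentially the paper's route: you reduce $\ddiv^\ell$ to the $\ell$-th derivative of $\tilde{\g}^{(\ell,p)}$ on $(x,x+\ell)$, and you get positivity from the Stirling expansion~\eqref{eq:g:stirling} by the same sign count. The only difference is cosmetic. The paper applies the mean value form (Lemma~\ref{lem:MVT}) to $\frac{\dd^k}{\dd x^k}\tilde{\g}^{(\ell,p)}$ for each $k$ and checks the sign of $(-1)^k\frac{\dd^{\ell+k}}{\dd x^{\ell+k}}\tilde{\g}^{(\ell,p)}$ directly. You instead use the B-spline integral representation together with closure of complete monotonicity under nonnegative mixtures of translates.
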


\begin{proof}
 Let $k\in\N_0$ and $x>0$. We infer from Lemma~\ref{lem:MVT} that there exists $\xi\in(x,x+\ell)$ such that
 \[
        (-1)^k\tilde{g}^{(k)}(x)
          = (-1)^k\frac{\dd^k}{\dd x^k}\ddiv^\ell\,\tilde{\g}^{(\ell,p)}(x) 
          = (-1)^k\ddiv^\ell\,\frac{\dd^k \tilde{\g}^{(\ell,p)}}{\dd x^k}(x)
          = (-1)^k \frac{\dd^{\ell+k}\tilde{\g}^{(\ell,p)}}{\dd x^{\ell+k}}(\xi).
 \]
 Using~\eqref{eq:g:stirling}, we find that 
 \[
  (-1)^k\tilde{g}^{(k)}(x)=\sum_{j=1}^{\ell}c_{j,k}^{(\ell,p)}\,\xi^{j-\ell-k-1/p},
 \]
 where
 \[
        c_{j,k}^{(\ell,p)}:=(-1)^{\ell+j}s(\ell,j)\prod_{m=0}^{\ell+k-1}\left|j-\frac{1}{p}-m\right|>0
 \]
 for all $j\in\{1,\dots,\ell\}$. Consequently, the function $\tilde{g}$ is completely monotone.
\end{proof}

Next, we address the verification of \eqref{assump:1}--\eqref{assump:4} for $\tilde{\g}^{(\ell,p)}$. Without the logarithmic complete monotonicity of $\tilde{g}$, the approach successfully applied for $\g^{(\ell,p)}$, which used the complete monotonicity of $g^{p-1}$, is inapplicable here. Because Lemma~\ref{lem:tilde_g_CM} only establishes complete monotonicity, and since the complete monotonicity is not invariant under arbitrary positive powers, see \cite{van-hae_96, van-hae_97}, we cannot verify \eqref{assump:2}--\eqref{assumpp:3} without additional restrictions.

Our approach relies on the results of van Haeringen \cite{van-hae_96, van-hae_97}. For $N\in\N$, we adopt the notation:
\[
\mathcal{CM}^N := \{ f\in C^{N}(\R_+) \mid (-1)^k f^{(k)}(x)\geq 0 \;\text{ for all } k\in\{0,\dots,N\} \text{ and all } x>0\}.
\]
Further, for $\lambda>0$, we write $f\in\mathcal{CM}^N_{\lambda}$ if and only if $f^{\lambda}\in\mathcal{CM}^N$. The class of completely monotone functions $\mathcal{CM}$ coincides with the intersection of $\mathcal{CM}^N$ over all $N\in\N$. The following claims are proven in~\cite[Thm.~4]{van-hae_96} and \cite{van-hae_97}.

\begin{lemma}[van Haeringen] \label{lem:CM_order}
        The following inclusions hold true:
        \begin{enumerate}[1)]
            \item for $N\in\{0,1,\dots,5\}$ and $\lambda\geq\kappa>0$, we have $\mathcal{CM}^N_\kappa\subset\mathcal{CM}_\lambda^N$;
            \item for $N\in\{0,1,\dots,7\}$ and $\lambda>1$, we have $\mathcal{CM}\subset\mathcal{CM}_\lambda^N$.
        \end{enumerate}
\end{lemma}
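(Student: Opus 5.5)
Since this lemma is quoted from van Haeringen, I would reprove it by a direct sign analysis of the Faà di Bruno expansion. The two parts are treated separately.

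\emph{Part 1).} Reduce to exponent one. Let $f\in\mathcal{CM}^N_\kappa$, put $F:=f^\kappa\in\mathcal{CM}^N$ and $r:=\lambda/\kappa\ge1$. Then $f^\lambda=F^r$, so it suffices to show $F^r\in\mathcal{CM}^N$ for every $F\in\mathcal{CM}^N$, $r\ge1$ and $N\le5$.
\begin{itemize}
\item Normalize by setting $a_j:=(-1)^jF^{(j)}/F\ge0$ for $j\le N$.
\item By Faà di Bruno, $(-1)^k(F^r)^{(k)}/F^r$ equals $P_k(r;a_1,\dots,a_k)$, a sum over set partitions of $\{1,\dots,k\}$. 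A partition with $m$ blocks of sizes $j_1,\dots,j_m$ contributes $r(r-1)\cdots(r-m+1)\,a_{j_1}\cdots a_{j_m}$.
\item All terms with $m\le\lfloor r\rfloor+1$ are non-negative. The terms with $r<m-1$ have coefficients of alternating sign.
\item For $r\ge k-1$ there is nothing to prove. For $r\in\mathbb{N}$ the claim also follows from closure of $\mathcal{CM}^N$ under products (Leibniz rule).
\item The real work is $r\in(1,k-1)\setminus\N$ with $k\le5$.
\end{itemize}
For $k=3$ a direct check works. The only possibly negative coefficient is $r(r-1)(r-2)$, attached to $a_1^3$, which comes with sign $(-1)^3$ from $F'^3$. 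A careful sign bookkeeping shows it is in fact non-negative. For $k=4,5$ I would group each negative monomial (such as $a_1^4$ and $a_1^2a_2$) with positive ones sharing the same total order. I would then show that the resulting polynomial in $r$ is non-negative on $[1,k-1]$ for all $a_j\ge0$.

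The intended tool is to write $P_k(r)$ as a combination of the values $P_k(1),P_k(2),\dots$ at integer points. These are non-negative by the product argument. The combination would use Lagrange interpolation weights that stay non-negative for $k\le5$. Failing that, I would use a case analysis on the intervals $(1,2),(2,3),(3,4)$ with AM--GM-type estimates.

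\emph{Part 2).} Here $F\in\mathcal{CM}$, so Bernstein's theorem gives $F(x)=\int e^{-xt}\,d\mu(t)$. Hence the sequence $b_j:=(-1)^jF^{(j)}(x)=\int t^je^{-xt}\,d\mu$ is a Stieltjes moment sequence. This supplies many extra inequalities among the $a_j$: log-convexity $a_{j-1}a_{j+1}\ge a_j^2$, and more generally the positivity of the Hankel determinants of $(b_j)$. With $F=f$ and $r=\lambda>1$, I would show $P_k(r;a)\ge0$ for $k\le7$ by the following route.
\begin{itemize}
\item Write $P_k(r;a)$ as a linear combination of moment expressions $\int\!\cdots\!\int \Phi(t_1,\dots,t_m)\,d\mu^{\otimes m}$.
\item Symmetrize $\Phi$.
\item Show that the symmetrized kernel is non-negative.
\end{itemize}
The last step is equivalent to checking that certain explicit symmetric polynomials in $t_1,\dots,t_m\ge0$, depending on $r$, are non-negative for $k\le7$.

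The main obstacle in both parts is the same: verifying non-negativity of these explicit polynomials in the non-integer ranges of $r$ at the top orders ($k=4,5$ in part 1, $k=6,7$ in part 2). This is precisely where the statement breaks down beyond the stated $N$. I expect it to require a finite but delicate computation, possibly computer-assisted, rather than a conceptual argument.
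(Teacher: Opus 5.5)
\textbf{There is a genuine gap in Part 1.} Your Part 1 argument already fails at $k=3$. With your normalisation $a_j=(-1)^jF^{(j)}/F\ge0$, all signs are absorbed, because $(-1)^k=\prod_{B\in\pi}(-1)^{|B|}$ for every set partition $\pi$. Thus
\[
\frac{-(F^r)'''}{F^r}=r\,a_3+3r(r-1)\,a_1a_2+r(r-1)(r-2)\,a_1^3 .
\]
For $r\in(1,2)$ the coefficient of $a_1^3$ is genuinely negative; there is no leftover sign from $F'^3$ to flip it. So the target you set for $k=4,5$, non-negativity of $P_k(r;a)$ for \emph{all} $a_j\ge0$, is false: take $a_1>0$ and $a_2=a_3=0$.

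The interpolation device cannot rescue this. An identity $P_3(r;a)=\sum_i w_i(r)P_3(i;a)$ with weights independent of $a$ is a polynomial identity in $a$. At $a=(1,0,0)$ it reads $r(r-1)(r-2)=\sum_i w_i(r)\,i(i-1)(i-2)$. With nodes $i\in\N$ and $w_i(r)\ge0$ the right-hand side is non-negative, while the left-hand side is negative on $(1,2)$.

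\textbf{The missing idea.} Membership in $\mathcal{CM}^N$ forces non-trivial pointwise relations among $F,F',\dots,F^{(N-1)}$, because the sign conditions hold on the whole half-line. Take $N\ge3$.
\begin{enumerate}[(a)]
\item $F'''\le0$ gives $F''(t)\le F''(x)$ for $t\ge x$.
\item $F\ge0$, $F'\le0$ and $F''\ge0$ force $F'(\infty)=0$. Hence $-F'(x)=\int_x^\infty F''$ and $F(x)\ge\int_x^\infty(t-x)F''(t)\,dt$.
\item A bathtub estimate then gives $FF''\ge\tfrac12F'^2$, i.e.\ $a_2\ge a_1^2/2$.
\item Consequently $P_3\ge r(r-1)(r-\tfrac12)a_1^3\ge0$ for $r\ge1$.
\end{enumerate}
The constant $\tfrac12$ is sharp (take a smoothed $(1-x)_+^2$), so a constraint of this kind cannot be avoided. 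For $N=4,5$ you need the full family of such relations. These come most naturally from Williamson's representation $F(x)=\int_{[0,\infty)}(1-xt)_+^{N-1}\,d\nu(t)$ of $N$-monotone functions, which plays for Part 1 the role Bernstein's theorem plays in your Part 2.

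\textbf{Part 2 is incomplete.} Your Part 2 plan does use the right extra structure, but it stops at the decisive step. Non-negativity of the symmetrized kernels for $k\le7$ and all $r>1$ is left unverified. Pointwise positivity of the kernel is only a sufficient condition, stronger than positivity of $\int\Phi\,d\mu^{\otimes k}$ over positive measures $\mu$, so nothing guarantees it holds up to $k=7$.

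For the record, the paper gives no proof of this lemma; it quotes van Haeringen \cite{van-hae_96, van-hae_97}.
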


\begin{proposition} \label{prop:old_g:assump} $\,$
        \begin{enumerate}[1)]
            \item For any $\ell\in\N$ and $p>1$, $\tilde{\g}^{(\ell,p)}\in H^{\ell}$ and satisfies \eqref{assump:1} and \eqref{assump:4}.
            \item If any condition (i)--(iii) from Theorem~\ref{thm:tilde_g_opt} holds, then $\tilde{\g}^{(\ell,p)}$ satisfies also \eqref{assump:2} and~\eqref{assumpp:3}.
        \end{enumerate}
\end{proposition}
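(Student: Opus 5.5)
For part 1), membership $\tilde{\g}^{(\ell,p)}\in H^\ell$ is immediate. For $n\in\{1,\dots,\ell-1\}$ the product $\prod_{j=1}^{\ell-1}(n-j)$ contains the factor $n-n=0$. At $n=0$ the factor $n^{1-1/p}$ vanishes, and $\tilde{\g}_n=0$ for $n<0$ by definition.

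Assumption \eqref{assump:4} follows from the finite expansion \eqref{eq:g:stirling}. It gives
\[
\tilde{\g}_n=\sum_{j=1}^{\ell}s(\ell,j)\,n^{j-1/p}=\sum_{i=0}^{\ell-1}s(\ell,\ell-i)\,n^{\ell-i-1/p},
\]
which is exactly of the form \eqref{assump:4} with $\alpha_i=s(\ell,\ell-i)$ for $i\le\ell-1$, $\alpha_i=0$ for $i\ge \ell$, zero remainder, and $\alpha_0=s(\ell,\ell)=1\neq0$.

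For \eqref{assump:1} I would write $\grad^k\tilde{\g}_n=\ddiv^k\tilde{\g}_{n-k}$ and use Lemma~\ref{lem:MVT}, so that $\grad^k\tilde{\g}_n=\tilde{\g}^{(k)}(\xi)$ with $\xi\in(n-k,n)$. This requires $\tilde{\g}\in C^k$ on an interval containing $[n-k,n]$. That holds when $n-k>0$. In the edge case $n=\ell=k$ the interval reaches $0$, where only continuity is available, and Lemma~\ref{lem:MVT} still applies with $g\in C([0,\ell])\cap C^\ell((0,\ell))$. It then remains to show $\tilde{\g}^{(k)}(x)>0$ for $x>\ell-k$ and $k\le\ell$. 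I would argue by Rolle's theorem. The function $\tilde{\g}$ has $\ell$ zeros on $[0,\ell-1]$, namely $0,1,\dots,\ell-1$, so $\tilde{\g}^{(k)}$ has at least $\ell-k$ zeros in $(0,\ell-1)$, the $m$-th one lying in $(m-1,m-1+k)$ roughly. Conversely, $\tilde{\g}^{(k)}$ has at most $\ell-k$ positive zeros, by Descartes' rule of signs for generalized polynomials $\sum c_j x^{j-1/p-k}$: the coefficients $s(\ell,j)$ alternate, so there are $\ell-1$ sign changes, but the derivative coefficients $\prod_{m<k}(j-1/p-m)$ change sign for $j\le k-1$. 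So all positive zeros of $\tilde{\g}^{(k)}$ lie below $\ell-k$, hmm, more precisely below the largest zero in $(\ell-1-?,\ell-1)$. Since the leading coefficient is positive, this gives $\tilde{\g}^{(k)}>0$ beyond its last zero. I expect that last zero to be $<\ell-k$; checking this interlacing carefully is the fiddly part. Alternatively, I would verify \eqref{assump:1} directly by an inductive discrete Rolle argument, noting that $\ddiv^k\tilde{\g}$ is a combination of positive values once the shifted arguments are $\ge\ell$.

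For part 2), note that $-\lap_p^{(\ell-k)}\grad^k\tilde{\g}_n=(-1)^{\ell-k}\ddiv^{\ell-k}(\tilde g^{p-1})$ evaluated at the appropriately shifted point (namely $n-\ell$), with $\tilde g$ as in \eqref{eq:func_tilde_g}. By Lemma~\ref{lem:MVT}, assumptions \eqref{assump:2} and \eqref{assumpp:3} hold provided $(-1)^N(\tilde g^{p-1})^{(N)}\ge0$ on $\R_+$ for all $N\le\ell$, i.e. $\tilde g\in\mathcal{CM}^\ell_{p-1}$. The edge point $x=0$ is handled by continuity, since $\tilde g$ is continuous on $[0,\infty)$.

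\textbf{Case (i).} For integer $p\ge2$, $\tilde g^{p-1}$ is a product of completely monotone functions, hence completely monotone by \cite[Cor.~1.6]{sch-son-von_12} and Lemma~\ref{lem:tilde_g_CM}.

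\textbf{Case (ii).} For $p>2$ we have $\lambda=p-1>1$, and Lemma~\ref{lem:CM_order} 2) with $N=\ell\le7$ gives $\tilde g\in\mathcal{CM}\subset\mathcal{CM}^\ell_{p-1}$.

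\textbf{Case (iii).} For $\ell\in\{1,2\}$ only derivatives of order $N\le2$ are needed. Here I would verify directly that $(\tilde g^{\lambda})'=\lambda\tilde g^{\lambda-1}\tilde g'\le0$, and that
\[
(\tilde g^\lambda)''=\lambda\tilde g^{\lambda-2}\bigl(\tilde g\tilde g''+(\lambda-1)\tilde g'^2\bigr)\ge0 .
\]
For $\lambda\ge1$ the second bracket is clearly nonnegative. For $\lambda\in(0,1)$ it requires the log-convexity type inequality $\tilde g\tilde g''\ge(1-\lambda)\tilde g'^2$. For $\ell=1$ this is the Fischer--Keller--Pogorzelski setting. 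For $\ell=2$ I expect to need an explicit Laplace-transform or Cauchy--Schwarz argument. Indeed, $\tilde g=\ddiv^\ell\tilde{\g}$ is a Laplace transform $\int e^{-xt}\mu(dt)$ with $\mu\ge0$ by Bernstein's theorem, and Cauchy--Schwarz then gives $\tilde g\tilde g''\ge\tilde g'^2$, i.e. log-convexity. That yields the required inequality for all $\lambda>0$ whenever $N\le2$.

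The main obstacle I anticipate is \eqref{assump:1} for general $k$, specifically locating the zeros so as to obtain strict positivity at $n=\ell$.
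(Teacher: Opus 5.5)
Your treatment of $\tilde{\g}^{(\ell,p)}\in H^\ell$, of \eqref{assump:4}, and of part 2) is sound. The argument for \eqref{assump:1}, however, has a genuine gap, and the route you sketch cannot be completed. The pointwise claim $\tilde{\g}^{(k)}(x)>0$ for $x>\ell-k$ is false in general. Take $\ell=4$ and $p=2$, so that $\tilde{\g}(x)=x^{7/2}-6x^{5/2}+11x^{3/2}-6x^{1/2}$ and
\[
\tilde{\g}'''(x)=\tfrac{105}{8}x^{1/2}-\tfrac{45}{4}x^{-1/2}-\tfrac{33}{8}x^{-3/2}-\tfrac{9}{4}x^{-5/2}.
\]
This function equals $-9/2$ at $x=1=\ell-k$ and is still negative at $x=1.2$. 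By your own Descartes count it has exactly one positive zero, and that zero lies between $1.2$ and $1.3$, i.e.\ beyond $\ell-k$. So the expected interlacing fails, and for $n=4$, $k=3$ the mean-value point $\xi\in(1,4)$ is not controlled. Your fallback is also incorrect as stated: $\ddiv^k\tilde{\g}_n=\sum_{j}\binom{k}{j}(-1)^{k-j}\tilde{\g}_{n+j}$ is an alternating combination of positive values, not a positive one.

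The missing idea is to apply Lemma~\ref{lem:MVT} only at the top order $k=\ell$ and then descend discretely. For $k=\ell$, the coefficient of $x^{j-\ell-1/p}$ in $\tilde{\g}^{(\ell)}$ is $s(\ell,j)\prod_{m=0}^{\ell-1}(j-1/p-m)$. The sign $(-1)^{\ell+j}$ of $s(\ell,j)$ cancels the sign $(-1)^{\ell-j}$ of the product, so all coefficients are positive (zero sign changes, in your Descartes language). Hence $\tilde{\g}^{(\ell)}>0$ on all of $\R_+$, and $\grad^\ell\tilde{\g}_n=\tilde{\g}^{(\ell)}(\xi)>0$ for every $n\ge\ell$, with no zero-location issue. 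Next, since $\tilde{\g}_n=0$ for $n<\ell$, one has $\grad^{k-1}\tilde{\g}_{\ell-1}=0$ and therefore $\grad^{k-1}\tilde{\g}_n=\sum_{m=\ell}^{n}\grad^{k}\tilde{\g}_m$. Downward induction from $k=\ell$ then gives \eqref{assump:1} for all $k$; this is exactly the paper's argument.

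In part 2), your cases (i) and (ii) coincide with the paper. For (iii) you take a genuinely different route, and it is correct. The Bernstein representation of $\tilde g$ plus Cauchy--Schwarz gives $\tilde g\,\tilde g''\ge(\tilde g')^2$, hence $\tilde g^{\lambda}\in\mathcal{CM}^2$ for every $\lambda>0$. This is more elementary than the paper's combination of the Lorch--Newman convexity of $h^{1/N}$ with van Haeringen's inclusion $\mathcal{CM}^N_\kappa\subset\mathcal{CM}^N_\lambda$.
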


\begin{proof}
First we prove claim 1). It follows immediately from~\eqref{eq:g_real} that $\tilde{\g}^{(\ell,p)}\in H^{\ell}$.

To verify~\eqref{assump:1}, it suffices to prove it for $k=\ell$. Indeed, by definition~\eqref{eq:def_grad_div_Z}, $\grad^\ell \tilde{\g}^{(\ell,p)}_n>0$ for all $n\ge\ell$ means that $\grad^{\ell-1}\tilde{\g}^{(\ell,p)}_n>\grad^{\ell-1}\tilde{\g}^{(\ell,p)}_{n-1}$ for all $n\geq\ell$. Since $\tilde{\g}^{(\ell,p)}\in H^{\ell}$, $\grad^{\ell-1}\tilde{\g}^{(\ell,p)}_{\ell-1}=0$, and so $\grad^{\ell-1}\tilde{\g}^{(\ell,p)}_{n}>0$ for all $n\geq\ell$. Iterating this argument, we show that
 $\grad^k\tilde{\g}^{(\ell,p)}_n>0$ for all $n\geq\ell$ and $k\in\{0,\dots,\ell\}$, which is \eqref{assump:1}.

It remains to check that $\grad^\ell \tilde{\g}^{(\ell,p)}_n>0$ for all $n\ge\ell$. To this end, we apply Lemma~\ref{lem:MVT} to~\eqref{eq:func_tilde_g}, which implies that for any $n\geq\ell$ there exists $\xi\in(n-\ell,n)$ such that
    \[
        \grad^{\ell}\tilde{\g}_{n}^{(\ell,p)}=\frac{\dd^{\ell}\tilde{\g}^{(\ell,p)}}{\dd x^{\ell}}(\xi)=\sum_{j=1}^{\ell}d_{j}^{(\ell,p)}\,\xi^{j-\ell-1/p},
    \]
where
    \[
        d_{j}^{(\ell,p)}:=(-1)^{\ell+j}s(\ell,j)\prod_{m=0}^{\ell-1}\left|j-\frac{1}{p}-m\right|>0.
    \]
Thus, \eqref{assump:1} holds. The verification of assumption~\eqref{assump:4} is immediate from~\eqref{eq:g:stirling} as $s(\ell,\ell)=1$.

Next we prove claim 2). The proof is carried out in three steps, each corresponding to the conditions (i), (ii), and (iii) from Theorem~\ref{thm:tilde_g_opt}, respectively.

\emph{Ad (i)} Let $\ell\in\N$ and $p\ge2$ be an integer. Recall that the set of completely monotone functions is closed under multiplication, see \cite[Cor.~1.6]{sch-son-von_12}. Consequently, from Lemma~\ref{lem:tilde_g_CM}, we infer that $\tilde{g}^{\,p-1}$ is completely monotone and, arguing the same as in \eqref{eq:p-lap_g_pos}, we show that 
\begin{equation}
\label{eq:strict_ineq_l=1}
(-1)^k\ddiv^k\bigl(\ddiv^\ell\tilde{\g}_n^{(\ell,p)}\bigr)^{p-1} > 0 \quad\text{ for all } k,n\in\N_0.
\end{equation}
In particular, $\tilde{\g}^{(\ell,p)}$ satisfies assumptions \eqref{assump:2} and \eqref{assumpp:3}.

\emph{Ad (ii)} Let $\ell\in\{3,4,5,6,7\}$ and $p>2$. By Lemma~\ref{lem:tilde_g_CM} and claim 2) of Lemma~\ref{lem:CM_order}, we have $\tilde{g}\in\mathcal{CM}\subset\mathcal{CM}_{p-1}^{\ell}$. Now, by applying Lemma~\ref{lem:MVT}, we deduce the inequalities
\[
(-1)^k\ddiv^k\bigl(\ddiv^\ell\tilde{\g}_n^{(\ell,p)}\bigr)^{p-1} \geq 0 \quad\text{ for all } n\in\N_0 \text{ and } k\in\{0,\dots,\ell\},
\]
which coincide with assumptions~\eqref{assump:2} and~\eqref{assumpp:3}.

\emph{Ad (iii)} Let $\ell=1$ or $\ell=2$ and $p>1$. Notice that, for $k=1$, the inequality \eqref{eq:strict_ineq_l=1} remains true for arbitrary $p>1$, which one verifies by Lemmas~\ref{lem:tilde_g_CM} and~\ref{lem:MVT}, bearing in mind that the inequalities in definition \eqref{eq:def_CM} are strict for non-constant completely monotone functions (see \cite[Rem.~1.5]{sch-son-von_12}). It means that
\begin{equation}
        \label{eq:stric_ineq_p_in_Z}
        \bigl(\ddiv^\ell\tilde{\g}^{(\ell,p)}_n\bigr)^{p-1}
          > \bigl(\ddiv^\ell\tilde{\g}^{(\ell,p)}_{n+1}\bigr)^{p-1}
          \quad\text{ for all }n\in\N_0.
\end{equation}
If $\ell=1$, we see from \eqref{eq:stric_ineq_p_in_Z} that \eqref{assumpp:3} holds even with strict inequality. Assumption \eqref{assump:2} is void for $\ell=1$.

For $\ell=2$, we shall make use of an auxiliary statement, proven in \cite[Thm.~7]{lor-new_83}, which states that, if $h\in\mathcal{CM}^{N+1}$, with $N\geq1$, then the function $h^{1/N}$ is convex. Put $h:=\ddiv^2\tilde{\g}^{(2,p)}$, then by Lemma~\ref{lem:tilde_g_CM} and the above property, we have
\[
        \left(h^{1/N}\right)'=\frac{h'}{Nh^{1-1/N}}\leq 0 \quad\text{ and }\quad \left(h^{1/N}\right)''\geq0 \quad\text{ on } \R_{+} \text{ for all } N\geq1.
\]
Therefore $h\in\mathcal{CM}_{1/N}^2$. Employing also claim 1) of Lemma~\ref{lem:CM_order}, we conclude that $h\in\mathcal{CM}_{\lambda}^2$ for any $\lambda>0$, hence, in particular, $h\in\mathcal{CM}_{p-1}^2$. As before, a quick application of Lemma~\ref{lem:MVT} shows that \eqref{assump:2} and \eqref{assumpp:3} hold for $\ell=2$. The proof of Proposition~\ref{prop:old_g:assump} is complete.
\end{proof}

\begin{proof}[Proof of Theorem~\ref{thm:tilde_g_opt}]
    It follows readily from Proposition~\ref{prop:old_g:assump} and Remark~\ref{rem:(A1-3)_impl_weak-opt}.
\end{proof}

\begin{remark}
Notice that the proof of Proposition~\ref{prop:old_g:assump} actually shows that assumption \eqref{assumpp:3} holds for $\tilde{\g}^{(\ell,p)}$ with the strict inequality for $\ell=1$ and $p>1$ or $\ell\in\N$ and integer $p\geq2$. By Theorem~\ref{thm:optimal_p-Hardy}, for these values of $\ell$ and $p$, the statement of Theorem~\ref{thm:tilde_g_opt} can be strengthened to the optimality of the discrete $p$-Hardy--Rellich--Birman weight $\tilde{\rho}^{(\ell,p)}$. In particular, for $\ell=1$, we extended the $p$-Hardy inequality, originally obtained in~\cite{fis-kel-pog_23}, to complex-valued sequences. In addition, we established that the weight $w^{\rm{FKP}}_p = \tilde{\rho}^{(1,p)}$, see~\eqref{eq:weight_fkp}, is not only null-critical, but in fact optimal in the sense of Definition~\ref{def:opt}.
\end{remark}

The proof technique of Proposition~\ref{prop:old_g:assump} does not extend to all $\ell\in\N$ and $p>1$; a specialized approach utilizing the precise structure of the parameter sequence $\tilde{\g}^{(\ell,p)}$ seems inevitable. Nevertheless, we conjecture
that all properties of $\tilde{\rho}^{(\ell,p)}$ proven in~\cite{fis-kel-pog_23, sta-wac_26, sta-wac_26acc} for the restricted cases $\ell=1$, $p>1$ or $\ell\in\N$, $p=2$ hold across the full parameter range $\ell\in\N$ and $p>1$. We formalize our expectation as the final conjecture.

\begin{conjecture}
    Let $\ell\in\N$ and $p>1$, and let $\tilde{\rho}^{(\ell,p)}$ be defined by~\eqref{eq:def:rho_tilde} and~\eqref{eq:def:g_tilde}.
    \begin{enumerate}[(i)]
        \item The sequence $\tilde{\rho}^{(\ell,p)}$ is an optimal discrete $p$-Hardy--Rellich--Birman weight.
        \item The sequence $\tilde{\rho}^{(\ell,p)}$ improves pointwise upon the classical $p$-Birman weight; cf.~\eqref{eq:ineq_improve_birman}.
        \item For all $n\geq\ell$, the terms $\tilde{\rho}_{n}^{(\ell,p)}$ admit a power series expansion in negative powers of $n$ with entirely non-negative coefficients; cf.~\eqref{eq:rho_ser_exp_pos}.
    \end{enumerate}
\end{conjecture}

\bibliographystyle{acm}

\end{document}